\newtheorem{properties}[theorem]{Properties}
\renewcommand{\epsilon}{\varepsilon}
\newcommand{\x}{x_1, x_2}
\newcommand{\eu}{\mathrm{e}}
\newcommand{\vel}{v}
\newcommand{\vela}{u}
\newcommand{\veli}{b}
\title{Well-posedness of an optical flow based optimal control formulation for image registration}
\author{Johannes Haubner\thanks{Department of Mathematics and Scientific Computing, NAWI Graz, University of Graz, Austria \email{(johannes.haubner@uni-graz.at) }}
    \and Christian Clason\thanks{Department of Mathematics and Scientific Computing, NAWI Graz, University of Graz, Austria; BioTechMed-Graz, Graz, Austria \email{(c.clason@uni-graz.at)}}
}
\begin{document}
\maketitle

\begin{abstract}
    We consider image registration as an optimal control problem using an optical flow formulation, i.e., we discuss an optimization problem that is governed by a linear hyperbolic transport equation. Requiring Lipschitz continuity of the vector fields that parametrize the transformation leads to an optimization problem in a non-reflexive Banach space. We introduce relaxations of the optimization problem involving smoothed maximum and minimum functions and appropriate Orlicz spaces. To derive well-posedness results for the relaxed optimization problem, we revisit and establish new existence and uniqueness results for the linear hyperbolic transport equations. We further discuss limit considerations with respect to the relaxation parameter and discretizations.
\end{abstract}

\section{Introduction}

Modeling optimization problems in non-reflexive Banach spaces has demonstrated its utility in several applications in science and engineering, such as image processing. Its use comes from the property that the total variation term in the $\mathrm{BV}$-norm promotes piecewise constant behavior of optimal solutions \cite{bredies2016pointwise, ring2000structural}. Optimization in non-reflexive Banach spaces is also appealing in shape optimization \cite{Deckelnick2022}. In shape optimization via the method of mappings, it is desirable to work with transformations that are bi-Lipschitz, i.e.~are bijective and both the transformation and its inverse attain $W^{1,\infty}$-regularity \cite{brandenburg2009continuous}. In this work, we focus on image registration, where similar requirements on the set of admissible transformations are present as for shape optimization, cf.~\cite{zapf2025medical}. Current Hilbert space approaches either work with too smooth function spaces that embed into $C^1$ and, therefore, e.g., prevent the appearance of non-smooth features, or in too weak function space settings that do not provide theoretical guarantees.

The goal of this work is to provide a theoretical foundation to bridge this gap. To this end, we consider
an optimal control problem that is governed by a linear hyperbolic transport equation, see e.g., \cite{borzi2003optimal, chen2011image, zapf2025medical}, given by
\begin{equation}
    \begin{aligned}
        \min_{v \in V_{\mathrm{ad}}} &J(\phi(\cdot, T), \phi_{\mathrm{tar}}) + \mathcal R(v) \\
        &\begin{aligned}
            \text{s.t. }  \partial_t \phi + \vel \cdot \nabla \phi &= 0 && \text{in } (0,T) \times \Omega, \\
            \phi(\cdot, 0) &= \phi_0 && \text{on } \Omega,
        \end{aligned}
    \end{aligned}
    \label{opt::prob}
\end{equation}
where $\Omega \subset \mathbb R^d$, $d \in \lbrace 2, 3 \rbrace$, denotes a bounded Lipschitz domain, $T > 0$, and $\phi_0, \phi_{\mathrm{tar}}: \Omega \to \mathbb R$ denote the input and target image. Further,  $V_{{\mathrm{ad}}}$ denotes the set of admissible transformations, $\vel$ denotes a vector field, and the objective function consists of a discrepancy measure $J$ and a regularization term $\mathcal R(\vel)$. Note that we write $\vel$ for stationary vector fields (used in the scope of analyzing the optimization problem) and $b$ for time-dependent vector fields (used in the scope of analyzing the transport equation). We are particularly interested in settings that involve an $W^{1,\infty}$-regularization of the velocity in space. For the sake of clarity and to present our main ideas, we discuss one realization with stationary vector fields in detail, which works with $V_{\mathrm{ad}} = W_0^{1,\infty}(\Omega)^d \cap H^{1+\sigma}(\Omega)^d$ and
\begin{equation*}
    \mathcal R(v) \coloneqq \beta \| v \|_{W^{1,\infty}_0(\Omega)^d} + \frac\alpha2 \| v \|_{H^{1+\sigma}(\Omega)^d}^2
\end{equation*}
for $\alpha, \beta > 0$ and $\sigma \in (0,\frac12)$ (see \eqref{w1inftynorm} for the definition of $\|\cdot\|_{W^{1,\infty}_0(\Omega)^d}$). While restricting to stationary vector fields for the specific application, we derive a general framework for the optimization and discuss the linear hyperbolic transport equation in a setting that allows for instationary vector fields.

Optimization problems with a $W^{1, \infty}$-norm regularization term are challenging due to the non-differentiability of its norm, see \cite{clason2012minimum}.
A common strategy, which is also called
``augmented Morozov regularization'' \cite{clason2012fitting}, is to introduce an additional scalar optimization variable $t$ that replaces the regularization in the objective function and to add constraints that bound the components of the regularization pointwise almost everywhere.
Instead, we consider a relaxation of an $L^\infty$-seminorm involving an approximation of the maximum and minimum function, see also \cite{bertazzoni2024approximation}, or \cite{Kruse2015} for interior point methods in function space, and replace the regularization term $\mathcal R$ by a parameterized regularization term $\mathcal R_\gamma$ given by
\begin{equation*}
    \mathcal R_\gamma(v) \coloneqq \beta \Psi_\gamma(v) + \frac\alpha2 \| v\|_{H^{1+\sigma}(\Omega)^d}^2,
\end{equation*}
where $\Psi_\gamma$ is defined in \eqref{psigamma}.
The resulting smooth approximation for an $L^\infty$-seminorm is given by
\begin{equation}
    \chi_\gamma(\vela) \coloneqq \gamma \ln\left(\frac{1}{|\Omega|}\int_{\Omega} \exp({\gamma^{-1} \vela(x)}) \mathrm{d} x\right) + \gamma \ln\left(\frac{1}{|\Omega|}\int_\Omega \exp({-\gamma^{-1} \vela(x)}) \mathrm{d}x\right),
    \label{formula_chigamma}
\end{equation}
which is naturally posed in the Orlicz space $L_{\exp}(\Omega)$. It is related to the entropic risk measure,
allowing us to use several of its properties for developing a concise framework.
Moreover, Orlicz spaces play a critical role in statistics \cite{Buldygin2000}, as they allow for modeling of precise tail behaviors of random variables and regularity of stochastic processes. The relaxation of the regularization leads to optimization in a non-reflexive Banach space with $\mathrm{div}(\vel) \in L_{\exp}(\Omega)$, for which it is not ensured that $\mathrm{div}(\vel) \in L^\infty(\Omega)$. This prevents the application of existence results for transport equations established in \cite{jarde2019existence}.

The optimal control problem \eqref{opt::prob} with instationary vector fields $\veli$ was theoretically investigated in \cite{diperna1989ordinary, ambrosio2004transport, crippa2008flow, crippa2014initial, crippa2014note, de2008ordinary, jarde2018analysis, jarde2019existence}.
Choosing vector fields in $L^1((0,T),W^{1,\infty}_0(\Omega)^d)$ ensures that the underlying transformation which maps $\phi_a$ to $\phi(\cdot,T)$ is bi-Lipschitz \cite[Thm.~5.1.8]{jarde2018analysis}, which is desirable for image registration. Moreover, the existence of solutions to the optimal control problem \eqref{opt::prob} can be shown under mild regularity assumptions. For example, \cite{jarde2019existence} establishes existence results under the regularity requirements $\veli \in L^\infty((0,T) \times \Omega)^d \cap L^2((0,T), \mathrm{BV} (\Omega)^d)$ and $\mathrm{div}(\veli) \in L^2((0,T), L^\infty(\Omega))$, which is shown by generalizing results from \cite{ambrosio2004transport, crippa2008flow, crippa2014initial, crippa2014note, de2008ordinary}, by establishing novel stability results \cite[Thm.~5.1 and Thm.~5.2]{jarde2019existence}, and by modeling the set of admissible controls adequately \cite[Sec.~8.2]{jarde2019existence}.  Up until now, there are several results on well-posedness of the optimization problem that are constrained by linear hyperbolic transport equations \cite{chen2011image, jarde2018analysis}. However, these results do not cover the case where the divergence of the vector field $\veli$ is only ensured to be in $L^p((0,T), L_{\exp}(\Omega))$, where $T >0$ and $\Omega \subset \mathbb R^d$, $d \geq 1$, is an appropriate Lipschitz domain. Moreover, this setting is not covered by uniqueness results that are available for the linear hyperbolic transport equations. While the choice $\mathrm{div}(\veli) \in L^1((0,T), L^\infty(\Omega))$ is the basis for the theoretical results in, e.g., \cite{crippa2008flow, crippa2014initial, diperna1989ordinary, de2008ordinary, jarde2018analysis, jarde2019existence}, several generalizations have been considered in the literature. On the one hand, one-sided Lipschitz conditions have been considered and the concept of duality solutions has been introduced \cite{conway1967generalized, bouchut2005uniqueness, lions2023transport}. However, this condition only provides uniqueness of the forward transport equation. Moreover, $\mathrm{div}(\veli) \in L^1((0,T), \mathrm{BMO}(\Omega))$ has been considered as a natural extension \cite{subko2014remark, mucha2010transport}. In \cite{ambrosio2009flows}, uniqueness results for $\exp^{c [\mathrm{div}(\veli)]^-} \in L^\infty((0,T), L^1(\Omega))$ for $[\cdot]^-(x) = \max(0, -(\cdot)(x))$ have been derived. We work with the setting $\mathrm{div}(\veli) \in L^p((0,T),  L_{\exp}(\Omega))$, $p \in (1,\infty]$, which is more general than the above results ($L^\infty$ in time is more restrictive in time than $L^p$, and, e.g. on cubes, $L_{\exp}$ is a larger class of functions than $\mathrm{BMO}$ \cite[Prop.~2]{tao2021survey}).

The relaxed optimization problem, for which $L^\infty$-seminorms are approximated by \eqref{formula_chigamma} and which is posed in a Banach space that we denote by $W^{1,\exp}$, has still a regularization term that is not differentiable. Also deriving differentiability results for the transport equation with respect to the vector field is involved and typically based on the notion of measure solutions \cite{diff_measuresol, Diehl2024, jarde2018analysis}, under additional restrictions and assumptions on the objective function $J$ in \eqref{opt::prob}, and the regularity of the vector fields, e.g., \cite{jarde2018analysis} .
In the scope of this work, we focus on eliminating the non-differentiability coming from the regularization term. Discretization of the problem yields a canonical way to obtain differentiability of $\Psi_\gamma$ due to the additional $W^{1,\infty}$-regularity that comes from the discretization.

\subsection*{Overview}
This work is organized as follows. \Cref{sec::general_framework} introduces the conceptual framework. Within this conceptual framework, we discuss well-posedness and limit considerations. The rest of the work shows that image registration with $W^{1,\infty}$-regularization can be considered in that framework.
In \cref{sec::preliminaries}, we recall the definition of the Orlicz spaces $L_{\exp}$ and $L{\log}L$ and collect results that are needed for the analysis. In \cref{sec::approxnorm}, we consider the approximation to a $L^\infty$-seminorm. We derive properties on boundedness, sequential weak$^\ast$ lower semicontinuity, and Fr\'echet differentiability. In \cref{sec::transporteq}, we consider the transport equation, recall known results from the literature \cref{subsec::existence}, prove a novel uniqueness result (see \cref{subsec::uniqueness}), and show the validity of a stability result from the literature under these requirements (see \cref{subsec::stability}). In \cref{sec::wellposed}, we collect the results from the previous sections in order to show that the properties of the conceptual framework are fulfilled. We finally show that there exists a sequence of semidiscretized relaxed optimization problems with differentiable regularization terms that approximate the original optimization problem \eqref{opt::prob} with a $W^{1,\infty}$-regularization. The appendix contains ancillary proofs.

\section{Conceptual framework}
\label{sec::general_framework}

We consider the following conceptual framework and justify in \cref{lemma::imreg::prop,p16} that the motivating image registration example can be considered in this framework. The abstract framework allows to extend the analysis in this work to other settings that fulfill the properties \ref{assumption:1}--\ref{assumption:limgamma}.

\subsection{Well-posedness}

We establish a well-posedness result under the following properties:

\begin{properties}[{control space, feasible set, convergence compatibility}]~
    \begin{enumerate}[label=(\textsc{p}\arabic*)]
        \item The space $X$ is a Hilbert space, $W$ is a separable Banach space, and the embedding $X \hookrightarrow W$ is compact.\label{assumption:1}
        \item The space $Y$ is the dual to a separable Banach space.
            \label{assumption:1.1}
        \item The embedding $Y \hookrightarrow W$ is continuous. \label{assumption:1.2}
        \item For each sequence $(v^k)_{k \in \mathbb N} \subset X \cap Y$ that converges weakly to $\hat v$ in $X$ and weakly$^\ast$ to $\bar v$ in $Y$, it holds that $\hat v = \bar v$ in $X \cap Y$.
            \label{limit.assumption}
    \end{enumerate}
\end{properties}

Now, we formulate conditions on the composite objective functions.

\begin{properties}[{composite objective function: continuity and boundedness}]~
    \begin{enumerate}[label=(\textsc{p}\arabic*)]
        \setcounter{enumi}{4}
        \item For each $\gamma >0$, $\psi_\gamma: Y \to \mathbb R\cup \lbrace \infty \rbrace$ is weakly$^\ast$ sequentially  lower semicontinuous.
            \label{assumptions:weakastconv}
        \item The function $\eta: X \to \mathbb R$ is weakly lower semicontinuous.
            \label{assumptions:weakconv}
        \item The function $j: Y \to \mathbb R$ is strongly continuous w.r.t.~the $W$-topology, i.e., if $(\vel^k)_{k \in \mathbb N} \subset Y$ and $\vel \in Y$ with $\vel^k \to \vel$ as $k\to \infty$ w.r.t. the strong topology in $W$, then $j(\vel^k) \to j(\vel)$ as $k\to \infty$. \label{assumption:4}
        \item The functions $j$, $\psi_\gamma$
            ($\gamma > 0$), and $\eta$ are bounded from below on $X \cap Y$. \label{assumption:13}
        \item The boundedness of $(\psi_\gamma(\vel^k) + \eta(\vel^k))_{k \in \mathbb{N}}$ implies boundedness of $(\| \vel^k\|_Y + \|\vel^k\|_X)_{k \in \mathbb{N}}$.
            \label{assumption:14}
    \end{enumerate}
\end{properties}

For this general setting, we consider the optimization problem with $\gamma > 0$,
\begin{equation}
    \min_{\vel \in X\cap Y} j(\vel) + \psi_\gamma(\vel) + \eta(\vel).
    \label{general:opt:prob::}
\end{equation}

Using the direct method of calculus of variations, the following theorem shows well-posedness of the optimization problems \eqref{general:opt:prob::}.

\begin{lemma} Let properties \ref{assumption:1}--\ref{assumption:14} hold. Then \eqref{general:opt:prob::} admits a solution.
    \label{theorem::full}
\end{lemma}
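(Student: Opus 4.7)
The plan is to apply the direct method of the calculus of variations. Assuming the infimum of $j + \psi_\gamma + \eta$ over $X \cap Y$ is finite (otherwise the statement is vacuous), I first fix a minimizing sequence $(v^k)_{k \in \mathbb{N}} \subset X \cap Y$. Property \ref{assumption:13} guarantees that $j$, $\psi_\gamma$, and $\eta$ are each bounded below on $X \cap Y$; together with the fact that the sum is bounded above along the minimizing sequence, this forces $(\psi_\gamma(v^k) + \eta(v^k))_{k \in \mathbb N}$ to be bounded. Property \ref{assumption:14} then yields boundedness of $(v^k)$ in both $X$ and $Y$.

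Since $X$ is a Hilbert space (\ref{assumption:1}), bounded sequences admit weakly convergent subsequences; since $Y$ is the dual of a separable Banach space (\ref{assumption:1.1}), the sequential Banach--Alaoglu theorem provides a weakly$^\ast$ convergent subsequence. Passing to a common subsequence (without relabeling), I obtain $\hat v \in X$ and $\bar v \in Y$ with $v^k \rightharpoonup \hat v$ in $X$ and $v^k \rightharpoonup^\ast \bar v$ in $Y$. Property \ref{limit.assumption} identifies $\hat v = \bar v \in X \cap Y$, and this common element serves as the candidate minimizer.

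To pass to the limit in the objective, I combine three arguments. The compact embedding $X \hookrightarrow W$ in \ref{assumption:1} implies $v^k \to \bar v$ strongly in $W$, so \ref{assumption:4} gives $j(v^k) \to j(\bar v)$. Properties \ref{assumptions:weakastconv} and \ref{assumptions:weakconv} respectively yield $\psi_\gamma(\bar v) \leq \liminf_{k \to \infty} \psi_\gamma(v^k)$ and $\eta(\bar v) \leq \liminf_{k \to \infty} \eta(v^k)$. Summing these three inequalities along the minimizing subsequence shows that $\bar v$ attains the infimum. The only delicate point is the compatibility of Hilbert-weak and Banach-weak$^\ast$ convergence on $X \cap Y$, but this is exactly what \ref{limit.assumption} supplies; the rest is a routine combination of weak/weak$^\ast$ compactness with the lower semicontinuity and continuity assumptions.
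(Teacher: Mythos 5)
Your proof is correct and follows essentially the same route as the paper: the direct method, with boundedness of the minimizing sequence from \ref{assumption:13}--\ref{assumption:14}, weak/weak$^\ast$ compactness and identification of the limits via \ref{assumption:1}--\ref{limit.assumption}, strong $W$-convergence from the compact embedding, and passage to the limit via \ref{assumptions:weakastconv}--\ref{assumption:4}. No gaps.
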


\begin{proof} Due to \ref{assumption:13} we know that the objective function $f(\vel)\coloneqq j(\vel) + \psi_\gamma(\vel) + \eta(\vel)$ of \eqref{general:opt:prob::} is bounded from below. Let $\bar f \coloneqq \inf_{\vel \in X \cap Y} f(\vel)$. We consider a minimizing sequence $(\vel^k)_{k \in \mathbb N}$ such that $f(\vel^k) \leq \bar f + \frac1k$. Hence, there exists $c > 0$ such that $\psi_\gamma(\vel^k) + \eta(\vel^k) \leq c$ for all $k \in \mathbb N$. Due to \ref{assumption:14}, $(\|\vel^k\|_Y + \| \vel^k\|_X)_{k\in \mathbb{N}}$ is bounded. Hence, using \ref{assumption:1}--\ref{limit.assumption}, there exists a subsequence $(\vel^k)_{k \in K}$, $K \subset \mathbb N$, and $\vel \in X \cap Y$ such that $\vel^k \rightharpoonup \vel$ w.r.t.~the weak topology in $X$, $\vel^k \rightharpoonup^\ast \vel$ w.r.t.~the weak$^\ast$ topology in $Y$, and $\vel^k \to \vel$ w.r.t.~the strong topology in $W$ for $K \ni k \to \infty$. Due to \ref{assumptions:weakastconv}--\ref{assumption:4}, $f(\vel) \leq \lim_{K \ni k \to \infty} f(\vel^k) =\ \bar f$. Hence $\vel$ is a solution of \eqref{general:opt:prob::}.
\end{proof}

\subsection{Discretization}

Note that we have not assumed continuity and differentiability of $\psi_\gamma: X \cap Y \to \mathbb R$. It is discussed in \cref{remark::discont} that it does not hold true for the considered application. In order to justify the reformulation \eqref{general:opt:prob::}, we motivate that restriction to suitable discretized subspaces provides a strategy to ensure Fr\'echet differentiability of the regularization term, see \cref{subsec::differentiability}.
Therefore, we investigate a sequence of semidiscretized problems
\begin{equation}
    \min_{\vel^k \in X_k} j(\vel^k) + \psi_\gamma(\vel^k) + \eta(\vel^k),
    \label{general:discretized:opt:prob}
\end{equation}
to approximate the problem
\eqref{general:opt:prob::},
where, for $k \in \mathbb N$, $X_k$ denotes a subspace of $X \cap Z$.
Moreover, we assume the following:
\begin{properties}[{approximation by discretization}]~
    \begin{enumerate}[label=(\textsc{p}\arabic*)]
        \setcounter{enumi}{9}
        \item $Z$ is a
            Banach space such that $Z \hookrightarrow Y$. \label{assumption:1.3}
        \item $X_k \subset X \cap Z$
            are subspaces such that $X_{k} \subset X_{k+1}$ for all $k \in \mathbb N$, and for all $w \in X \cap Z$ there exists a sequence $(w^k)_{k \in \mathbb N}$ with $w^k\in X_k$ such that $ w^k \to w$ as $k \to \infty$ w.r.t. the strong topology in $X$ and  $(\| w^k\|_Z)_{k \in \mathbb{N}}$ is bounded.
            \label{assumption::discretization}
        \item For each bounded sequence $(\vel^k)_{k \in \mathbb N} \subset Z$ and $\vel \in Z$ such that $\vel^k \to \vel$ as $k \to \infty$ w.r.t. the strong topology in $W$, it follows that $\lim_{k \to \infty} \psi_\gamma(\vel^k) = \psi_\gamma(\vel)$. \label{assumption.continuity}
        \item The function $\eta: X \to \mathbb R$ is strongly continuous. \label{assumption:etacont}
    \end{enumerate}
\end{properties}

\begin{remark}
    In general, it is challenging to approximate problems in $Z$ by a sequence of discretized problems if the function space is non-separable (which is the case for our motivating image registration example, for which $Z = W^{1,\infty}_0(\Omega)^d$). However, our relaxation of the norm via $\psi_\gamma$, $\gamma > 0$ allows to circumvent this issue due to \ref{assumption.continuity}, which uses convergence in $W$, in combination with considering the limit $\gamma \to 0$.
\end{remark}

\begin{lemma}
    Let the properties \ref{assumption:1}--\ref{assumption:etacont} be fulfilled. Further, let $\gamma >0$, and let $\vel^k \in X_k$ be a global minimizer of the optimization problem \eqref{general:discretized:opt:prob}. Then there exists a subsequence $(\vel^k)_{k \in K}$, $K \subset \mathbb N$, that converges to a $\vel_\gamma \in X \cap Y$ with respect to the strong topology in $W$, weak topology in $X$ and weak$^\ast$ topology in $Y$, and $\vel_\gamma$ fulfills
    \begin{equation*}
        j(\vel_\gamma) + \psi_\gamma(\vel_\gamma) + \eta(\vel_\gamma) \leq j(w) + \psi_\gamma(w) + \eta(w)
        \quad \text{for all} \quad w \in X \cap Z.
    \end{equation*}
    \label{lemma::discretized_glob}
\end{lemma}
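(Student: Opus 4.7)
The plan is to apply the direct method to the discretized problems, using the approximation property \ref{assumption::discretization} both to bound the minimizing sequence and to pass to the limit in the minimality inequality. First, I would show boundedness by fixing some reference element $w_0 \in X \cap Z$ (for instance $w_0 = 0$) and invoking \ref{assumption::discretization} to obtain $w_0^k \in X_k$ with $w_0^k \to w_0$ strongly in $X$ and $(\|w_0^k\|_Z)_{k \in \mathbb{N}}$ bounded. The compact embedding in \ref{assumption:1} upgrades this to strong convergence in $W$; hence \ref{assumption:4} yields $j(w_0^k) \to j(w_0)$, \ref{assumption:etacont} yields $\eta(w_0^k) \to \eta(w_0)$, and \ref{assumption.continuity} yields $\psi_\gamma(w_0^k) \to \psi_\gamma(w_0)$. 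Global minimality of $\vel^k$ on $X_k$ gives
\begin{equation*}
    j(\vel^k) + \psi_\gamma(\vel^k) + \eta(\vel^k) \leq j(w_0^k) + \psi_\gamma(w_0^k) + \eta(w_0^k),
\end{equation*}
so the left-hand side is bounded above. Combined with the lower bounds in \ref{assumption:13}, this shows that $(\psi_\gamma(\vel^k) + \eta(\vel^k))_{k \in \mathbb{N}}$ is bounded, and \ref{assumption:14} then produces a uniform bound on $(\|\vel^k\|_X + \|\vel^k\|_Y)_{k \in \mathbb{N}}$.

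Next, I would extract a subsequence (not relabeled) so that $\vel^k \rightharpoonup \hat\vel$ in $X$ and $\vel^k \rightharpoonup^\ast \bar\vel$ in $Y$, using reflexivity of the Hilbert space $X$ from \ref{assumption:1} and sequential Banach-Alaoglu via \ref{assumption:1.1}. Property \ref{limit.assumption} identifies $\hat\vel = \bar\vel =: \vel_\gamma \in X \cap Y$, and the compact embedding in \ref{assumption:1} upgrades the weak $X$-convergence to strong convergence $\vel^k \to \vel_\gamma$ in $W$.

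To verify the claimed minimality, fix an arbitrary $w \in X \cap Z$ and apply \ref{assumption::discretization} once more to obtain $w^k \in X_k$ with $w^k \to w$ in $X$ and $(\|w^k\|_Z)_{k \in \mathbb{N}}$ bounded. The same continuity arguments as in the first paragraph give
\begin{equation*}
    j(w^k) + \psi_\gamma(w^k) + \eta(w^k) \to j(w) + \psi_\gamma(w) + \eta(w).
\end{equation*}
From the global minimality of $\vel^k$ on $X_k$ I then pass to $\liminf_{k\to\infty}$ and combine $j(\vel^k) \to j(\vel_\gamma)$ (from \ref{assumption:4} and strong $W$-convergence), $\liminf_k \psi_\gamma(\vel^k) \geq \psi_\gamma(\vel_\gamma)$ (from \ref{assumptions:weakastconv} and weak$^\ast$ convergence in $Y$), and $\liminf_k \eta(\vel^k) \geq \eta(\vel_\gamma)$ (from \ref{assumptions:weakconv} and weak convergence in $X$); the superadditivity of $\liminf$, together with the fact that $j(\vel^k)$ actually converges, yields the desired inequality.

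The only delicate point, and hence the main obstacle, lies in coordinating the topologies: the continuity of $\psi_\gamma$ guaranteed by \ref{assumption.continuity} is only available along sequences that are bounded in $Z$ and converge strongly in $W$, which is precisely why \ref{assumption::discretization} has been designed to supply both $X$-strong convergence and $Z$-boundedness of the approximating sequence. Once this interplay between \ref{assumption:1.3}--\ref{assumption.continuity} is set up, the remaining arguments reduce to a routine direct-method computation adapted to the non-reflexive dual setting of $Y$.
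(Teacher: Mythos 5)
Your proposal is correct and follows essentially the same direct-method argument as the paper: approximate elements of $X\cap Z$ by \ref{assumption::discretization}, use minimality to bound the objective, extract a subsequence via \ref{assumption:14}, \ref{assumption:1}, \ref{assumption:1.1} and \ref{limit.assumption}, and conclude with the (semi)continuity properties \ref{assumptions:weakastconv}--\ref{assumption:4}. Your two-stage structure (a fixed reference $w_0$ for boundedness, then an arbitrary $w$ for the minimality inequality along the already-extracted subsequence) is a mild presentational refinement that makes explicit that the subsequence does not depend on $w$, but the substance is identical to the paper's proof.
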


\begin{proof}
    Let $w \in X \cap Z$ be arbitrary but fixed. Due to \ref{assumption::discretization}, there exists a sequence $(w^k)_{k \in \mathbb N}$ with $w^k \in X_k \subset X \cap Z$ for all $k \in \mathbb N$ such that $\lim_{k \to \infty} \| w^k - w\|_{X} = 0$ and  $( \| w^k\|_Z)_{k \in \mathbb N}$ is bounded. \ref{assumption:1} ensures $w^k \to w$ in $W$.
    Thus, due to \ref{assumption.continuity}, \ref{assumption:etacont}, \ref{assumption:1.3} and \ref{assumption:4},
    \begin{equation}
        \lim_{k \to \infty} f(w^k) = f(w), \label{eq::limit_full}
    \end{equation}
    where $f: X \cap Y \to \mathbb R$ is defined as $f(\cdot)\coloneqq j(\cdot)+\psi_\gamma(\cdot) + \eta(\cdot)$.
    By definition of $(\vel^k)_{k \in \mathbb N}$, we further know
    \begin{equation}
        f(\vel^k) \leq  f(w^k) \label{eq::limit_full2}
    \end{equation}
    and, therefore, due to \eqref{eq::limit_full}, $(f(\vel^k))_{k \in \mathbb N}$ is bounded.
    Hence, due to \ref{assumption:14}, there exists a subsequence $(\vel^k)_{k \in K}$, $K \subset \mathbb N$ and $\hat \vel \in X$, $\bar \vel \in Y$, $\vel \in W$ such that $\vel^k \rightharpoonup \hat \vel$ in $X$, $\vel^k \rightharpoonup^\ast \bar \vel$ in $Y$, and, due to \ref{assumption:1}, $\vel^k \to \vel$ in $W$, for $K \ni k \to \infty$. Due to \ref{limit.assumption}, we know that $v = \hat v = \bar v \eqqcolon v_\gamma$ in $X \cap Y$.
    \ref{assumptions:weakastconv}, \ref{assumptions:weakconv} and \ref{assumption:4} yield, combined with \eqref{eq::limit_full2} and \eqref{eq::limit_full}, that
    \begin{equation*}
        f(\vel_\gamma) \leq \liminf_{K \ni k \to \infty} f(\vel^k) \leq \liminf_{K \ni k \to \infty} f(w^k) \leq f(w).
        \qedhere
    \end{equation*}
\end{proof}

\subsection{Limit considerations}
In order to justify the approximated optimization problems \eqref{general:opt:prob::}, we consider the limit
$\gamma \to 0^+$
and the optimization problem \begin{equation}
    \min_{\vel \in X\cap Z} j(\vel) + \beta \| \vel\|_Z + \eta(\vel),
    \label{limiting:optprob}
\end{equation}
for $\beta > 0$.

In order to establish consistency results, we further assume that the following properties hold true.
\begin{properties}[limit considerations]~
    \begin{enumerate}[label=(\textsc{p}\arabic*)]
        \setcounter{enumi}{13}
        \item For each $v \in Y$, $(0,\infty) \ni \gamma \mapsto \psi_\gamma(v) \in \mathbb{R}$ is monotonically decreasing. \label{assumption:estgamma}
        \item Let $\beta>0$, $\psi_0: Y \to \mathbb R$ be such that $\psi_0 (\vel) = \beta \| \vel \|_Z$ for all $\vel \in Z$ and $\psi_0(\vel) = \infty$ for all $\vel \in Y\setminus Z$. It holds that $\lim_{\gamma \to 0^+} \psi_\gamma(\vel) = \psi_0(\vel)$ for arbitrary $\vel \in Y$.  \label{assumption:limgamma}
    \end{enumerate}
\end{properties}

\begin{lemma}
    Let properties \ref{assumption:1}--\ref{assumption:limgamma} be fulfilled.
    Let $(\gamma_\ell)_{\ell \in \mathbb N} \subset (0,\infty)$ be monotonically decreasing sequence that converges to $0$. For $\ell \in \mathbb N$, let $\vel^\ell \in X \cap Y$ be such that
    \begin{equation}
        j(\vel^\ell) + \psi_{\gamma_\ell}(\vel^\ell) + \eta(\vel^\ell) \leq j (w) + \psi_{\gamma_\ell}(w) + \eta(w)
        \label{optimality::eq}
    \end{equation}
    for all $w \in X \cap Z$ (which exist due to \cref{theorem::full,lemma::discretized_glob}). Then the sequence $(\vel^k)_{k \in \mathbb N}$ has a subsequence $(\vel^k)_{k \in K}$, $K \subset \mathbb N$, converging to $\bar \vel$ with respect to the strong topology in $W$, weak topology in $X$ and weak$^\ast$ topology in $Y$, and $\bar v$ solves \eqref{limiting:optprob}.
    \label{theorem::existenceglobsol}
\end{lemma}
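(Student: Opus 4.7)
The plan is a $\Gamma$-convergence style argument: properties \ref{assumption:estgamma} and \ref{assumption:limgamma} assert that the penalties $\psi_{\gamma_\ell}$ monotonically converge from below to $\psi_0 = \beta \|\cdot\|_Z$, and I want to combine this with the weak/weak$^\ast$ compactness of the minimizing sequence. First I establish uniform bounds: fixing any reference point $w_0 \in X \cap Z$ (e.g.\ $w_0 = 0$), monotonicity \ref{assumption:estgamma} together with \ref{assumption:limgamma} yields $\psi_{\gamma_\ell}(w_0) \leq \psi_0(w_0) = \beta \|w_0\|_Z$ for every $\ell$, so evaluating \eqref{optimality::eq} at $w = w_0$ bounds its right-hand side uniformly in $\ell$. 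Together with the lower bounds in \ref{assumption:13}, this gives a uniform upper bound on $\psi_{\gamma_\ell}(v^\ell) + \eta(v^\ell)$, and then \ref{assumption:14} bounds $(\|v^\ell\|_X + \|v^\ell\|_Y)_{\ell \in \mathbb N}$. Using \ref{assumption:1}--\ref{limit.assumption} and the compact embedding in \ref{assumption:1}, I extract a subsequence $(v^\ell)_{\ell \in K}$, $K \subset \mathbb N$, converging weakly in $X$, weak$^\ast$ in $Y$, and strongly in $W$ to a common limit $\bar v \in X \cap Y$.

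The key technical step is the diagonal $\Gamma$-liminf inequality $\liminf_{\ell \in K} \psi_{\gamma_\ell}(v^\ell) \geq \psi_0(\bar v)$. For arbitrary but fixed $\gamma > 0$, since $(\gamma_\ell)$ is monotone with limit $0$, there is $\ell_0$ with $\gamma_\ell \leq \gamma$ for all $\ell \geq \ell_0$, so \ref{assumption:estgamma} yields $\psi_{\gamma_\ell}(v^\ell) \geq \psi_\gamma(v^\ell)$ for such $\ell$. Weak$^\ast$ lower semicontinuity of the \emph{single} function $\psi_\gamma$ provided by \ref{assumptions:weakastconv} then gives $\liminf_{\ell \in K} \psi_{\gamma_\ell}(v^\ell) \geq \liminf_{\ell \in K} \psi_\gamma(v^\ell) \geq \psi_\gamma(\bar v)$, and letting $\gamma \to 0^+$ with \ref{assumption:limgamma} yields the claim. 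Moreover, the left-hand side is finite by step one, so $\psi_0(\bar v) < \infty$, which forces $\bar v \in Z$ and hence $\bar v \in X \cap Z$, i.e.\ $\bar v$ is admissible in \eqref{limiting:optprob}.

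Finally, I pass to the limit in \eqref{optimality::eq} for arbitrary $w \in X \cap Z$. On the left, \ref{assumption:4} gives $j(v^\ell) \to j(\bar v)$ along the subsequence (since $v^\ell \to \bar v$ in $W$), \ref{assumptions:weakconv} gives $\liminf \eta(v^\ell) \geq \eta(\bar v)$, and the $\Gamma$-liminf above controls $\psi_{\gamma_\ell}(v^\ell)$, so $\liminf_{\ell \in K}\bigl[j(v^\ell)+\psi_{\gamma_\ell}(v^\ell)+\eta(v^\ell)\bigr] \geq j(\bar v) + \beta\|\bar v\|_Z + \eta(\bar v)$ (using that $j(v^\ell)$ actually converges so the three $\liminf$ terms add). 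On the right, $j(w)$ and $\eta(w)$ are $\ell$-independent while $\psi_{\gamma_\ell}(w) \to \beta\|w\|_Z$ by \ref{assumption:limgamma}. Combining yields $j(\bar v) + \beta\|\bar v\|_Z + \eta(\bar v) \leq j(w) + \beta\|w\|_Z + \eta(w)$ for all $w \in X \cap Z$, so $\bar v$ solves \eqref{limiting:optprob}. The one nontrivial obstacle is the diagonal $\Gamma$-liminf for the term $\psi_{\gamma_\ell}(v^\ell)$, which the monotonicity property \ref{assumption:estgamma} conveniently reduces to weak$^\ast$ lsc of a single $\psi_\gamma$ followed by the scalar pointwise limit $\gamma \to 0^+$ supplied by \ref{assumption:limgamma}.
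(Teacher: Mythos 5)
Your proof is correct and essentially the same as the paper's: both use the monotonicity \ref{assumption:estgamma} to reduce the diagonal term $\psi_{\gamma_\ell}(v^\ell)$ to a fixed $\psi_{\gamma_0}(v^\ell)$, apply weak$^\ast$ lower semicontinuity of that single functional, and then let $\gamma_0 \to 0^+$ via \ref{assumption:limgamma}. One small precision: \ref{assumption:14} is stated for a fixed $\gamma$, so before invoking it you should replace $\psi_{\gamma_\ell}(v^\ell)$ by $\psi_{\gamma_1}(v^\ell) \leq \psi_{\gamma_\ell}(v^\ell)$ (the same monotonicity reduction you perform in your $\Gamma$-liminf step), which is exactly how the paper obtains the uniform bound before extracting the subsequence.
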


\begin{proof}
    Let $\gamma > 0$ and $w \in X \cap Z$ be arbitrary but fixed. For $\gamma_\ell \leq \gamma$, we obtain, due to assumption \ref{assumption:estgamma} and \eqref{optimality::eq},
    \begin{equation}
        \begin{aligned}[t]
            j(\vel^\ell) + \psi_{\gamma}(\vel^\ell) + \eta(\vel^\ell) &\leq j(\vel^\ell) + \psi_{\gamma_\ell}(\vel^\ell) + \eta(\vel^\ell) \\& \leq j(w) + \psi_{\gamma_\ell} (w) + \eta(w) \leq j(w) + \psi_0(w) + \eta(w).
        \end{aligned}
        \label{estimation::limcon}
    \end{equation}
    Hence, $(\psi_{\gamma}(\vel^\ell) + \eta(\vel^\ell))_{\ell \in \mathbb N}$ is bounded, and, as in the proof of \cref{lemma::discretized_glob}, we obtain a subsequence $(\vel^\ell)_{\ell \in L}$, $L \subset \mathbb N$, that converges to a $\bar \vel$ with respect to the strong topology in $W$, weak topology in $X$ and weak$^\ast$ topology in $Y$, and, using \eqref{estimation::limcon}, the limit fulfills
    \begin{equation*}
        j(\bar \vel) + \psi_{\gamma}(\bar \vel) + \eta (\bar \vel) \leq j(w) + \psi_0(w) + \eta(w).
    \end{equation*}
    Since $\gamma$ is arbitrarily chosen the inequality holds for all $\gamma > 0$. Considering the limit $\gamma \to 0^+$ and using assumption \ref{assumption:limgamma} yields
    \begin{equation*}
        j(\bar \vel) + \psi_{0}(\bar \vel) + \eta (\bar  \vel) \leq j(w) + \psi_0(w) + \eta(w).
    \end{equation*}
    Since $w \in X \cap Z$ and due to assumption \ref{assumption:13}, $\psi_0 (\bar \vel) < \infty$ and hence $\bar \vel \in Z$ due to assumption \ref{assumption:limgamma}.
\end{proof}

Moreover, we show that we can approximate a solution of \eqref{limiting:optprob} by a sequence of semidiscretized optimization problems.

\begin{theorem}
    Let properties \ref{assumption:1}--\ref{assumption:limgamma} be fulfilled.
    For $\gamma > 0$, let $v_\gamma^k$ be  a global minimizer of \eqref{general:discretized:opt:prob}.
    Then there exists a sequence $(\gamma_k)_{k \in \mathbb N}$ with $\gamma_k >0 $ for all $k \in \mathbb N$ and $\lim_{k \to \infty} \gamma_k = 0$ such that a subsequence of $(v_{\gamma_k}^k)_{k \in \mathbb N}$ converges (w.r.t.~the strong topology in $W$, the weak topology in $X$, and the weak$^\ast$-topology in $Y$) to $\bar v$, where $\bar v$ is a global minimizer of \eqref{limiting:optprob}. Furthermore, any (w.r.t.~the strong topology in $W$, the weak topology in $X$, and the weak$^\ast$-topology in $Y$) convergent subsequence of $(v_{\gamma_k}^k)_{k \in \mathbb N}$ converges to a global minimizer of \eqref{limiting:optprob}.
    \label{theorem::convergence}
\end{theorem}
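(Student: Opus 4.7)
The plan is to combine \cref{lemma::discretized_glob} (varying the discretization at fixed $\gamma$) and \cref{theorem::existenceglobsol} (sending $\gamma \to 0^+$ at the continuous level) through a diagonal argument in the style of Attouch, and to handle the second assertion via a $\liminf$-type transfer of optimality from $X_k$ to $X \cap Z$.

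First, for each $m \in \mathbb N$ I would set $\tilde\gamma_m := 1/m$ and apply \cref{lemma::discretized_glob} to obtain an index set $K_m \subset \mathbb N$ and a limit $v_m^\star \in X \cap Y$ such that $v_{\tilde\gamma_m}^k \to v_m^\star$ strongly in $W$, weakly in $X$, and weakly$^\ast$ in $Y$ as $K_m \ni k \to \infty$, with $v_m^\star$ minimizing $j + \psi_{\tilde\gamma_m} + \eta$ over $X \cap Z$. I would then apply \cref{theorem::existenceglobsol} to $(v_m^\star)_{m \in \mathbb N}$ to extract a subsequence, indexed by $M \subset \mathbb N$, converging in the same three topologies to some $\bar v \in X \cap Z$ which is a global solution of \eqref{limiting:optprob}. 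All these elements are uniformly bounded in $X \cap Y$ by the chained optimality inequalities and \ref{assumption:14}, so on the resulting bounded subset the combined topology (strong $W$, weak $X$, weak$^\ast$ $Y$) is metrizable by a single metric $d$ thanks to the separability of $W$ (via \ref{assumption:1}) and of the predual of $Y$ (via \ref{assumption:1.1}). For each $m \in M$ I then pick $k_m \in K_m$ strictly increasing with $d(v_{\tilde\gamma_m}^{k_m}, v_m^\star) < 1/m$; the triangle inequality yields $v_{\tilde\gamma_m}^{k_m} \to \bar v$ in the three topologies as $M \ni m \to \infty$. Setting $\gamma_{k_m} := \tilde\gamma_m$ and extending $\gamma_k$ piecewise constantly and monotonically on the remaining indices produces the required sequence with $\gamma_k \to 0^+$ and the desired convergent subsequence.

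For the final statement, suppose that a subsequence $(v_{\gamma_{k_j}}^{k_j})_j$ converges to some $v^\star$ in the three topologies. Fix $w \in X \cap Z$; by \ref{assumption::discretization} and the nestedness of $(X_n)_n$ I obtain $w^n \in X_n$ with $w^n \to w$ in $X$ and $(\|w^n\|_Z)_n$ bounded, and $w^n \in X_{k_j}$ whenever $k_j \ge n$. Optimality of $v_{\gamma_{k_j}}^{k_j}$ in $X_{k_j}$ combined with monotonicity \ref{assumption:estgamma} gives, for any fixed $\gamma_0 > 0$ and $j$ large enough,
\begin{equation*}
    j(v_{\gamma_{k_j}}^{k_j}) + \psi_{\gamma_0}(v_{\gamma_{k_j}}^{k_j}) + \eta(v_{\gamma_{k_j}}^{k_j}) \le j(w^n) + \psi_{\gamma_{k_j}}(w^n) + \eta(w^n).
\end{equation*}
Passing to $\liminf_{j \to \infty}$ on the left via \ref{assumption:4}, \ref{assumptions:weakastconv}, and \ref{assumptions:weakconv}, and using $\psi_{\gamma_{k_j}}(w^n) \to \psi_0(w^n)$ from \ref{assumption:limgamma} (applied to the fixed $w^n$) on the right, I obtain $j(v^\star) + \psi_{\gamma_0}(v^\star) + \eta(v^\star) \le j(w^n) + \psi_0(w^n) + \eta(w^n)$. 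Sending $\gamma_0 \to 0^+$ via \ref{assumption:limgamma} once more and then $n \to \infty$ using \ref{assumption:4} and \ref{assumption:etacont} yields $j(v^\star) + \psi_0(v^\star) + \eta(v^\star) \le j(w) + \beta\|w\|_Z + \eta(w)$, so $v^\star$ solves \eqref{limiting:optprob}.

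The main obstacle I anticipate is the last $n \to \infty$ step, which requires $\limsup_n \|w^n\|_Z \le \|w\|_Z$ to upgrade $j(v^\star) + \psi_0(v^\star) + \eta(v^\star) \le j(w^n) + \beta\|w^n\|_Z + \eta(w^n)$ to the desired inequality at $w$; this is not immediate from \ref{assumption::discretization}, but is standard under the projection-type approximations arising in the image-registration application and can be handled abstractly by a further diagonal extraction combining \ref{assumption.continuity} with \ref{assumption:limgamma} to build a suitable recovery sequence for $\psi_0$. The diagonal construction of $(k_m)$ is comparatively routine once metrizability of the joint topology on bounded sets is in hand.
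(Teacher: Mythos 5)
Your first part takes a genuinely different route from the paper: you pass to the limit in two stages (first $k\to\infty$ at fixed $\tilde\gamma_m$ via \cref{lemma::discretized_glob}, then $m\to\infty$ via \cref{theorem::existenceglobsol}) and then diagonalize using a metric for the combined topology on bounded sets. The paper instead builds the diagonal directly: it fixes a global solution $\hat v$ of \eqref{limiting:optprob}, chooses $\gamma_m$ so that $\psi_{\gamma_m}(\hat v)$ is within $\tfrac1{2m}$ of $\psi_0(\hat v)$ (\ref{assumption:limgamma}), then chooses $k_m$ so that the discrete approximant $v^{k_m}$ of $\hat v$ satisfies $|f_{\gamma_m}(v^{k_m})-f_{\gamma_m}(\hat v)|\le\tfrac1{2m}$ (\ref{assumption.continuity}), giving $f_{\gamma_m}(v^{k_m}_{\gamma_m})\le f_0(\hat v)+\tfrac1m$ along the whole diagonal; a single compactness/lower-semicontinuity step then finishes both assertions at once. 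Your route is workable, but note two technical points: metrizability of the weak topology of $X$ on bounded sets is not among \ref{assumption:1}--\ref{assumption:limgamma} ($X$ is not assumed separable; you must restrict to the separable closed subspace spanned by the iterates, or test against countable families), and the uniform bound needed for \ref{assumption:14} across varying $\gamma$ requires the reduction to a fixed $\gamma_0$ via \ref{assumption:estgamma}, which you should state explicitly.

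The genuine gap is the one you flag yourself in the second part, and it is not a removable technicality in the order of limits you chose. After sending $j\to\infty$ and $\gamma_0\to 0^+$ you are left with $j(v^\star)+\psi_0(v^\star)+\eta(v^\star)\le j(w^n)+\beta\|w^n\|_Z+\eta(w^n)$, and \ref{assumption::discretization} only gives boundedness of $(\|w^n\|_Z)_n$, not $\liminf_n\|w^n\|_Z\le\|w\|_Z$; indeed, for $Z=W^{1,\infty}_0(\Omega)^d$ one cannot in general expect the $Z$-norms of interpolants to converge to $\|w\|_Z$. The repair is precisely the paper's interchange of limits: never evaluate $\psi_0$ at the discrete approximants. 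For each $m$ first pick $\gamma$ with $|\psi_\gamma(w)-\psi_0(w)|\le\tfrac1{2m}$ (\ref{assumption:limgamma} at the \emph{fixed} $w$), then pick $n$ with $|\psi_\gamma(w^n)-\psi_\gamma(w)|\le\tfrac1{2m}$ (\ref{assumption.continuity} at the \emph{fixed} $\gamma$), and only then let $j$ be large enough that $k_j\ge n$ and $\gamma_{k_j}\le\gamma$; this bounds $f_{\gamma_{k_j}}(v^{k_j}_{\gamma_{k_j}})\le f_{\gamma}(w^n)\le f_0(w)+\tfrac1m$ without ever producing the term $\beta\|w^n\|_Z$. (Beware that monotonicity \ref{assumption:estgamma} alone cannot substitute here: it bounds $\psi_{\gamma_{k_j}}(w^n)$ from above only by $\psi_0(w^n)$, which is exactly the quantity you cannot control.) With this modification your argument for the second assertion closes, and in fact the same chain of inequalities, applied with $w=\hat v$, also yields the first assertion directly, as in the paper.
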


\begin{proof}
    Let $\hat v \in X \cap Z$ be a global solution of \eqref{limiting:optprob} (which exists due to \cref{theorem::existenceglobsol}). Let $m \in \mathbb N$ be arbitrary but fixed. Due to \ref{assumption:limgamma}, there exists $\gamma_m > 0$ such that
    \begin{equation}
        | (j(\hat v) + \psi_{\gamma_m}(\hat v) + \eta( \hat v )) - (j(\hat v) + \psi_{0}(\hat v) + \eta( \hat v )) | \leq \frac1{2m}.
        \label{estim1}
    \end{equation}
    Due to \ref{assumption::discretization}, there exists a sequence $(v^k)_{k\in\mathbb N}$ such that $v^k \in X_k$, $\lim_{k\to \infty} \| v^k  - \hat v \|_{X} = 0$, and $(\| v^k\|_{Z})_{k\in \mathbb N}$ is bounded.
    Due to \ref{assumption.continuity}, \ref{assumption:etacont} and \ref{assumption:4}, there exists $k_m\in \mathbb N$ such that
    \begin{equation}
        | (j(\hat v) + \psi_{\gamma_m}(\hat v) + \eta(\hat v)) - (j(v^{k_m}) + \psi_{\gamma_m}(v^{k_m}) + \eta( v^{k_m} )) | \leq \frac{1}{2m}.
        \label{estim2}
    \end{equation}
    We choose $\gamma_m$ and $k_m$ successively for $m \in \mathbb N$ such that $(\gamma_m)_{m\in \mathbb N}$ is monotonically decreasing and $(k_m)_{m\in \mathbb N}$ is monotonically increasing. Combining \eqref{estim1} and \eqref{estim2} yields
    \begin{equation*}
        j(v^{k_m}) + \psi_{\gamma_m}(v^{k_m}) + \eta( v^{k_m})  \leq j(\hat v) +  \beta \| \hat v\|_{Z} + \eta( \hat v ) + \frac1m.
        \label{estim30}
    \end{equation*}
    Due to \ref{assumption:estgamma}, for $\gamma > 0$, we obtain
    \begin{equation}
        j(v^{k_m}) + \psi_{\gamma}(v^{k_m}) + \eta( v^{k_m}) \leq j(\hat v) + \beta \| \hat v\|_{Z} + \eta( \hat v ) + \frac1m.
        \label{estim3}
    \end{equation}
    for all $m \in \mathbb N$ that fulfill $\gamma \geq \gamma_m$.
    Due to the boundedness of $(j(v^{k_m}) + \psi_{\gamma_m}(v^{k_m}) + \eta(v^{k_m}))_{m \in \mathbb N}$, \ref{assumption:1}, \ref{assumption:1.1} and \ref{assumption:14}, there exists a subsequence $(v^{k_m})_{m \in M}$, $M \subset \mathbb N$ that converges to a limit $\bar v \in X \cap Y$ w.r.t. the strong topology in $W$, weak topology in $X$, and weak$^\ast$ topology in $Y$. Due to \ref{assumptions:weakastconv},  \ref{assumptions:weakconv} and \ref{assumption:4}, we have
    \begin{equation}
        j(\bar v) + \psi_{\gamma}(\bar v) + \eta( \bar v ) \leq \liminf_{M \ni m \to \infty} j(v^{k_m}) + \psi_{\gamma}(v^{k_m}) + \eta( v^{k_m}).
        \label{estim4}
    \end{equation}
    Combining \eqref{estim3} and \eqref{estim4} yields
    \begin{equation*}
        j(\bar v) + \psi_{\gamma}(\bar v) + \eta( \bar v ) \leq j(\hat v) + \beta\| \hat v\|_{Z} + \eta( \hat v ).
    \end{equation*}
    Since the estimate holds for all $\gamma >0$, due to \ref{assumption:limgamma},
    \begin{equation*}
        j(\bar v) + \beta \| \bar v\|_{Z} + \eta(\bar v) \leq j(\hat v) + \beta\| \hat v\|_{Z} + \eta(\hat v ),
    \end{equation*}
    which implies the assertion.
\end{proof}

\section{Orlicz spaces}
\label{sec::preliminaries}

We consider Orlicz spaces as defined in \cite{kufner1977function}. Let $\Phi$ be a (relaxed) Young function according to \cite[Def.~4.2.1, and Rem.~4.2.9]{kufner1977function}, i.e., there exists a function $\varphi: [0, \infty) \to [0,\infty)$ such that $\Phi(t) \coloneqq \int_0^t \varphi(s) \mathrm d s$, $ t \geq 0$ with $\varphi(0) = 0$, $\varphi(s) \geq 0$, $\varphi(s)$ is left-continuous at any $s > 0$, $\varphi$ is nondecreasing on $[0, \infty)$, and $\lim_{s\to \infty} \varphi(s) = \infty$.
Moreover, let $\Omega \subset \mathbb R^d$ be a bounded open domain with measure $\mu$ such that $\int_\Omega\mathrm{d}\mu = 1$. The Orlicz space $\mathcal L^\Phi(\Omega)$ is defined by
\begin{equation}
    \mathcal L^\Phi (\Omega) \coloneqq \mathcal L^\Phi(\Omega,  \mu) \coloneqq \left\lbrace f: \Omega \to \mathbb R \text{ measurable } : ~ \exists \eta > 0 \text{ s.t. } \int_\Omega \Phi(\eta|f(x)|) \mathrm d \mu(x) < \infty \right\rbrace.
\end{equation}

\begin{remark}
    In the scope of this work, for bounded domains $\Omega \subset \mathbb R^d$
    that satisfy $|\Omega| > 0$, where $|\Omega|$ denotes the Lebesgue measure of $\Omega$, we work with $\mathrm d \mu \coloneqq \frac{1}{|\Omega|}\mathrm{d} x$.
    \label{remark::mudef}
\end{remark}

For a measurable function $f$ on $\Omega$, its Luxemburg norm is defined by

\begin{equation}
    \| f \|_\Phi \coloneqq \inf \left\lbrace \gamma > 0 \,:\, \int_\Omega \Phi\left(\frac{|f(x)|}{\gamma}\right) \mathrm d \mu (x) \leq 1 \right\rbrace.
\end{equation}

\begin{proposition}[{cf. \cite[Prop.~2.2]{simon2011convexity} and \cite[Lem.~2.17]{Turett1980}}]
    Let $\Phi$ be a Young function, and $\Omega \subset \mathbb R^d$ be such that $\int_\Omega \mathrm d \mu = 1$. Then $(\mathcal L^\Phi(\Omega), \|\cdot\|_{\Phi})$ is a Banach space. Moreover, there holds that
    \begin{equation*}
        \int_\Omega \Phi\left(\frac{|f(x)|}{\| f\|_{\Phi}}\right) \mathrm{d} \mu(x) \leq 1
        \label{eq::estimate}
        \quad \text{for all} \quad f \in \mathcal L^\Phi(\Omega), \quad  f \geq 0.
    \end{equation*}
\end{proposition}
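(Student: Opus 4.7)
The plan is to verify three things in sequence: that $\|\cdot\|_\Phi$ defines a norm on the vector space $\mathcal L^\Phi(\Omega)$, that the infimum in its definition is attained (yielding the claimed integral bound), and that the resulting normed space is complete.

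For the norm property, I would first note that $\Phi$ is convex, nondecreasing, and left-continuous: these properties all follow from the integral representation $\Phi(t) = \int_0^t \varphi(s)\,\mathrm{d}s$ together with the monotonicity and left-continuity of $\varphi$. Convexity of $\Phi$ implies that $\mathcal L^\Phi(\Omega)$ is closed under addition and hence a vector space. Nonnegativity, definiteness, and positive homogeneity of $\|\cdot\|_\Phi$ are immediate from the definition. For the triangle inequality, given $\gamma_i > \|f_i\|_\Phi$ for $i=1,2$, I would exploit the pointwise convex-combination bound
\[
\Phi\!\left(\tfrac{|f_1+f_2|}{\gamma_1+\gamma_2}\right) \leq \tfrac{\gamma_1}{\gamma_1+\gamma_2}\,\Phi\!\left(\tfrac{|f_1|}{\gamma_1}\right) + \tfrac{\gamma_2}{\gamma_1+\gamma_2}\,\Phi\!\left(\tfrac{|f_2|}{\gamma_2}\right),
\]
integrate to obtain $\|f_1+f_2\|_\Phi \leq \gamma_1+\gamma_2$, then infimize.

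For the integral bound, I would take a sequence $\gamma_n \downarrow \|f\|_\Phi$ with each $\gamma_n$ in the defining set, so that $\int_\Omega \Phi(|f|/\gamma_n)\,\mathrm{d}\mu \leq 1$. Since $\gamma_n$ decreases, $|f|/\gamma_n$ increases pointwise to $|f|/\|f\|_\Phi$, and by the monotonicity and left-continuity of $\Phi$ the integrands $\Phi(|f|/\gamma_n)$ increase pointwise to $\Phi(|f|/\|f\|_\Phi)$. Monotone convergence then gives the claimed bound (with the edge case $\|f\|_\Phi = 0$ handled separately, using that $\Phi(t) \to \infty$ as $t \to \infty$ to conclude $f = 0$ a.e.).

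For completeness, given a Cauchy sequence $(f_n)$ I would extract a subsequence with $\|f_{n_{k+1}}-f_{n_k}\|_\Phi \leq 2^{-k}$, set $g_K = \sum_{k=1}^{K} |f_{n_{k+1}}-f_{n_k}|$, and combine the triangle inequality with the attainment bound from the previous step to obtain $\|g_K\|_\Phi \leq C$ uniformly in $K$. Monotone convergence then forces the pointwise limit $g = \lim_K g_K$ to be finite almost everywhere, so $(f_{n_k})$ is Cauchy pointwise a.e.\ with a measurable limit $f$. Fatou's lemma applied to $\Phi(|f - f_n|/\gamma)$ with an appropriately chosen $\gamma > 0$ then yields both $f \in \mathcal L^\Phi(\Omega)$ and $\|f - f_n\|_\Phi \to 0$. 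The main obstacle is the attainment step: it hinges crucially on left-continuity of $\varphi$, and without the relaxed-Young-function hypothesis the integrand could fail to converge at jump points of $\varphi$, which would cascade to break the completeness argument. Once attainment is secured, the remaining steps are standard Orlicz-space machinery.
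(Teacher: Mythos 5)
Your argument is correct and is the standard Riesz--Fischer-type proof for Orlicz spaces; the paper itself gives no proof of this proposition but merely cites \cite{simon2011convexity} and \cite{Turett1980}, and your reconstruction matches what those references do (convexity of $\Phi$ for the norm axioms, monotone convergence along $\gamma_n \downarrow \|f\|_\Phi$ for the modular bound, and a rapidly convergent subsequence plus Fatou for completeness). One small correction: your closing worry about left-continuity of $\varphi$ is a red herring here, since $\Phi(t)=\int_0^t\varphi(s)\,\mathrm{d}s$ with $\varphi$ nondecreasing and finite-valued is automatically (absolutely) continuous on $[0,\infty)$ regardless of jumps of $\varphi$, so the attainment step goes through for any Young function in the paper's sense; left-continuity of $\Phi$ only becomes a genuine issue for the more general Young functions that are allowed to take the value $+\infty$.
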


\begin{proposition}[{cf. \cite[Lem.~2.6]{clason2021entropic}}]
    If $\Phi$ is a Young function, and $\Omega \subset \mathbb R^d$ satisfies $\int_\Omega \mathrm d \mu = 1$, then
    \begin{equation}
        \| f \|_\Phi \leq \max\left( \int_\Omega \Phi(|f(x)|) \mathrm{d} \mu(x), 1\right)
        \quad \text{for all} \quad f \in \mathcal L^\Phi(\Omega).
    \end{equation}
    \label{lem::maxnorm}
\end{proposition}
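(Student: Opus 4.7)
The plan is to estimate the integrand directly by exploiting convexity of $\Phi$ together with the normalization $\mu(\Omega) = 1$. Set $M \coloneqq \max\bigl(\int_\Omega \Phi(|f(x)|)\,\mathrm{d}\mu(x),\, 1\bigr)$. By the definition of the Luxemburg norm as an infimum, it suffices to exhibit $\gamma = M$ as a feasible candidate, i.e.\ to verify that
\begin{equation*}
    \int_\Omega \Phi\!\left(\frac{|f(x)|}{M}\right)\mathrm{d}\mu(x) \leq 1.
\end{equation*}

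First I would split into two cases according to which term realizes the maximum. In the easy case $\int_\Omega \Phi(|f|)\,\mathrm{d}\mu \leq 1$, one has $M = 1$ and the desired inequality reduces to the hypothesis itself, so $\|f\|_\Phi \leq 1 = M$ follows immediately from the definition of $\|\cdot\|_\Phi$.

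In the remaining case $M = \int_\Omega \Phi(|f|)\,\mathrm{d}\mu > 1$, I would use convexity of $\Phi$ (which follows from its representation as the integral of a nondecreasing nonnegative density $\varphi$, together with $\Phi(0)=0$). Concretely, for any $\lambda \in [0,1]$ and any $t \geq 0$,
\begin{equation*}
    \Phi(\lambda t) = \Phi\bigl(\lambda t + (1-\lambda)\cdot 0\bigr) \leq \lambda\Phi(t) + (1-\lambda)\Phi(0) = \lambda\Phi(t).
\end{equation*}
Applying this pointwise with $\lambda = 1/M \in (0,1]$ and $t = |f(x)|$ yields $\Phi(|f(x)|/M) \leq M^{-1}\Phi(|f(x)|)$, and integrating over $\Omega$ gives
\begin{equation*}
    \int_\Omega \Phi\!\left(\frac{|f(x)|}{M}\right)\mathrm{d}\mu(x) \leq \frac{1}{M}\int_\Omega \Phi(|f(x)|)\,\mathrm{d}\mu(x) = 1,
\end{equation*}
so that $M$ is admissible in the infimum and hence $\|f\|_\Phi \leq M$.

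There is no real obstacle here; the only subtlety is to remember that $\Phi$ is automatically convex because it is the primitive of a nondecreasing function (the definition quoted from \cite{kufner1977function} is given via $\varphi$ rather than by convexity directly), and to ensure that the scaling $1/M \leq 1$ is available, which is exactly why the $\max$ with $1$ is needed in the statement. The normalization $\mu(\Omega) = 1$ itself is not used beyond ensuring that $\mu$ is the probability measure fixed in \cref{remark::mudef}, so the argument applies verbatim in that setting.
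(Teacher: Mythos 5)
Your argument is correct and is exactly the standard proof of this fact (the paper itself only cites \cite[Lem.~2.6]{clason2021entropic} without reproducing the argument, and that lemma is proved by the same convexity--scaling step $\Phi(t/M)\leq M^{-1}\Phi(t)$ you use). The only point worth noting is the trivial degenerate case $\int_\Omega \Phi(|f|)\,\mathrm{d}\mu = \infty$, where the inequality holds vacuously; otherwise nothing is missing.
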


We work with the space $L_{\exp}(\Omega) \coloneqq \mathcal L^{\Phi_{\exp}}(\Omega)$,
where
\begin{equation}
    \Phi_{\exp}(t) \coloneqq \begin{cases}
        t \quad &\text{if } 0 \leq t < 1, \\
        \eu^{t-1} \quad & \text{if } t \geq 1.
    \end{cases}
\end{equation}
The Fenchel--Legendre conjugate of a convex function $\Phi: I \to \mathbb R$ with $I \subset \mathbb R$ is defined by $\Phi^\ast(t) \coloneqq \sup_{s \in I} s t - \Phi(s)$. Then, by definition of $\Phi^\ast$, the following lemma holds.
\begin{proposition}[Fenchel--Young inequality]
    Let $I \subset \mathbb R$, and $\Phi: I\to \mathbb R$ be a convex function. Then,
    \begin{equation*}
        yz \leq  \Phi(y) + \Phi^\ast(z) \quad \text{for all} \quad y,z \in I.
    \end{equation*}
    \label{lem::estimate_sum}
\end{proposition}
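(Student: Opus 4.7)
The Fenchel--Young inequality in the form stated is essentially a restatement of the definition of the Fenchel--Legendre conjugate, so the plan is just to unpack that definition and evaluate the defining supremum at a suitable point. No auxiliary result is needed — in particular, neither the convexity of $\Phi$ nor any of the Young/Orlicz machinery developed earlier in the section is invoked.

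Concretely, I would fix $y, z \in I$ arbitrary. By the definition given immediately before the proposition,
\begin{equation*}
    \Phi^{\ast}(z) \;=\; \sup_{s \in I}\bigl( s z - \Phi(s) \bigr).
\end{equation*}
Because $y \in I$, the value $y z - \Phi(y)$ is a member of the set over which the supremum is taken, so
\begin{equation*}
    \Phi^{\ast}(z) \;\geq\; y z - \Phi(y),
\end{equation*}
and rearranging yields the claim $yz \leq \Phi(y) + \Phi^{\ast}(z)$. Since $y,z$ were arbitrary, the inequality holds for all pairs in $I$.

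There is no real obstacle here: the argument is a one-line consequence of the definition and does not even use convexity of $\Phi$ for the inequality itself. Convexity is presumably assumed so that $\Phi^{\ast}$ enjoys its standard properties (convex, proper, lower semicontinuous) when the inequality is applied later, e.g., in combination with the Luxemburg norm estimates from \cref{lem::maxnorm} to bound products of functions in $L_{\exp}$ and $L{\log}L$. The only subtlety worth flagging is the tacit assumption that the supremum defining $\Phi^{\ast}(z)$ is indeed attained or at least finite in the intended application — for the one-sided inequality stated here this is irrelevant, since $\Phi^{\ast}(z) \in \mathbb{R}\cup\{+\infty\}$ and the inequality is trivially true if $\Phi^{\ast}(z) = +\infty$.
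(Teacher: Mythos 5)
Your proof is correct and matches the paper's own justification, which simply remarks that the inequality holds ``by definition of $\Phi^\ast$'' — i.e., the same one-line argument of bounding the supremum from below by the value at $s = y$. Your observation that convexity of $\Phi$ is not actually needed for the inequality itself is also accurate.
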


\begin{remark}
    The function $\Phi_{\log}$ defined by
    \begin{equation}
        \Phi_{\log} (t) \coloneqq \begin{cases} t \ln^+ t \quad &\text{if } t > 0, \\ 0 & \text{if } t = 0,  \end{cases}
    \end{equation}
    with $\ln^+ (t) \coloneqq \max(0, \ln(t))$, is the Fenchel--Legendre transform of $\Phi_{\exp}$, see \cite[Example 4.3.6]{kufner1977function}. The function $\Phi_{\log}$
    is a (relaxed) Young function \cite[Rem. 4.2.10, and Example 4.1.3 (ii)]{kufner1977function}.
    \label{psilog}
\end{remark}

\begin{proposition}[{cf. \cite[Thm.~3.1]{Turett1980}}]
    Let $\Omega \subset \mathbb R^d$ be such that $\int_\Omega \mathrm d \mu = 1$.
    Further, let $\Phi^\ast$ be the Fenchel--Legendre transform of a convex function $\Phi: I \to \mathbb R$, $I \subset \mathbb R$. If
    $f \in \mathcal L^\Phi (\Omega)$  and $g \in \mathcal L^{\Phi^\ast} (\Omega)$,
    then
    \begin{equation}
        \int_\Omega f(x) g(x) \mathrm{d}\mu (x) \leq  2\| f\|_{\Phi} \| g \|_{\Phi^\ast}.
    \end{equation}
    \label{lem::hoelder_generalization}
\end{proposition}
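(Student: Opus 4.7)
The plan is to establish this generalized Hölder inequality by normalizing $f$ and $g$ by their Luxemburg norms, applying the Fenchel--Young inequality pointwise, and then integrating against the probability measure $\mu$. First, I would reduce to the case where $f, g \geq 0$ by replacing $fg$ with $|f| \, |g|$ in the integrand, which only enlarges it and does not affect the Luxemburg norms on the right-hand side. The degenerate cases $\|f\|_\Phi = 0$ or $\|g\|_{\Phi^\ast} = 0$ are trivial, since then $f$ or $g$ vanishes $\mu$-almost everywhere and the left-hand side is $0$.

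Assuming both norms are strictly positive, I would set
\begin{equation*}
    \tilde f \coloneqq \frac{|f|}{\|f\|_\Phi}, \qquad \tilde g \coloneqq \frac{|g|}{\|g\|_{\Phi^\ast}}.
\end{equation*}
The first proposition in this section (the displayed estimate after the definition of $\|\cdot\|_\Phi$, applied once to $\Phi$ and once to $\Phi^\ast$) gives
\begin{equation*}
    \int_\Omega \Phi(\tilde f(x)) \,\mathrm{d}\mu(x) \leq 1, \qquad \int_\Omega \Phi^\ast(\tilde g(x)) \,\mathrm{d}\mu(x) \leq 1.
\end{equation*}
Applying the Fenchel--Young inequality (\cref{lem::estimate_sum}) pointwise in $x$ yields
\begin{equation*}
    \tilde f(x)\,\tilde g(x) \leq \Phi(\tilde f(x)) + \Phi^\ast(\tilde g(x))
\end{equation*}
for $\mu$-a.e.\ $x \in \Omega$. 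Integrating this inequality against $\mu$ and invoking the two bounds above gives $\int_\Omega \tilde f \, \tilde g \,\mathrm{d}\mu \leq 2$, and multiplying through by $\|f\|_\Phi \|g\|_{\Phi^\ast}$ delivers the claimed estimate.

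I do not expect a serious obstacle here; the only points requiring minor care are measurability of $\tilde f, \tilde g$ (immediate from measurability of $f,g$) and the admissibility of the pointwise Fenchel--Young application, which is justified since the Young function $\Phi$ is defined on the relevant nonnegative range and its convex conjugate $\Phi^\ast$ takes finite values on the image of $\tilde g$ by the integrability $\Phi^\ast(\tilde g) \in L^1(\Omega,\mu)$ granted by $g \in \mathcal L^{\Phi^\ast}(\Omega)$.
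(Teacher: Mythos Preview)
The paper does not supply its own proof of this proposition; it merely cites \cite[Thm.~3.1]{Turett1980}. Your argument is the standard one---normalize by the Luxemburg norms, apply the Fenchel--Young inequality pointwise, integrate, and use the defining property $\int_\Omega \Phi(|f|/\|f\|_\Phi)\,\mathrm{d}\mu \leq 1$---and it is correct, matching what one finds in the cited reference and in standard treatments of Orlicz spaces.
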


\begin{proposition}[cf. {\cite[Sec.~4.4, Thm.~4.11.1]{kufner1977function}}]
    If $\Phi_{\log}$ is given as in \cref{psilog}, then the space $L{\log}L (\Omega) = \mathcal L^{\Phi_{\log}}(\Omega) $ is separable.
    \label{lemma::5}
\end{proposition}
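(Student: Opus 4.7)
The plan is to invoke the cited theorem from \cite{kufner1977function}, which yields separability of the Orlicz space $\mathcal L^\Phi(\Omega)$ over a finite measure space whenever $\Phi$ satisfies a $\Delta_2$-type growth condition at infinity. Since \cref{remark::mudef} fixes $\mathrm d \mu = |\Omega|^{-1} \mathrm d x$ and hence $\int_\Omega \mathrm d \mu = 1$, the underlying measure space is finite, so only the asymptotic behavior of $\Phi_{\log}$ at infinity matters; its vanishing on $[0,1]$ plays no role for separability.

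The task therefore reduces to verifying the $\Delta_2$-condition near infinity for $\Phi_{\log}(t) = t \ln^+ t$. This is a direct computation: for $t \geq 1$ one has the identity $\Phi_{\log}(2t) = 2t \ln 2 + 2\Phi_{\log}(t)$, and the additional term $2t \ln 2$ can be absorbed into a fixed multiple of $\Phi_{\log}(t) = t \ln t$ as soon as $t$ is bounded below away from $1$ (for instance $t \geq \eu$), yielding $\Phi_{\log}(2t) \leq K \Phi_{\log}(t)$ with $K$ independent of $t$ in that range.

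With the $\Delta_2$-condition near infinity in hand, the cited theorem immediately delivers separability of $\mathcal L^{\Phi_{\log}}(\Omega) = \LlogL(\Omega)$. No substantive obstacle is anticipated: the whole argument reduces to the elementary growth estimate above, after which the standard density argument (approximation in $\mathcal L^{\Phi_{\log}}$ by simple functions supported on sets from a countable generator of the Borel $\sigma$-algebra of $\Omega$, with rational coefficients) is already packaged into the cited reference. The one point to keep in mind when formulating the proof is to stress that we only need $\Delta_2$ at infinity, since the degeneracy of $\Phi_{\log}$ on $[0,1]$ would preclude a global $\Delta_2$-bound.
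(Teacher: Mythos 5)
Your proposal is correct and matches the paper's approach: the paper simply imports the separability statement from \cite[Sec.~4.4, Thm.~4.11.1]{kufner1977function}, and your argument does the same while additionally verifying the theorem's hypothesis, namely the $\Delta_2$-condition near infinity for $\Phi_{\log}$ (the identity $\Phi_{\log}(2t)=2t\ln 2+2\Phi_{\log}(t)$ for $t\geq 1$ and the absorption of $2t\ln 2$ into $K\,t\ln t$ for $t\geq \eu$ are both correct, as is the remark that only the behavior at infinity matters because $\mu$ is a finite measure).
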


\begin{remark}
    The space $L_{\exp}(\Omega)$ is not separable \cite[Example 4.4.5, Remark 4.11.2]{kufner1977function}, and can be identified with the dual space of $L{\log}L(\Omega)$ \cite[Thm.~6.5]{Bennett1988}. Moreover, the spaces $L_{\exp}(\Omega)$ and $L{\log}L(\Omega)$ are not reflexive \cite[Thm.~7.7.1]{Kosmol2011}.
    \label{predualLexp}
\end{remark}

\section{Approximation to essential supremum}
\label{sec::approxnorm}

Before discussing results for an approximation of a norm on $W^{1,\infty}_0(\Omega)^d$ (\cref{subsec::w1infty}), we consider properties of an approximation to the essential supremum $\mathrm{ess\,sup}_\Omega$ on $\Omega$ and its dual representation (\cref{subsec::dualrepresentation}). We discuss convexity, monotonicity and asymptotic properties (\cref{subsec::cma}, \ref{assumption:estgamma}, \ref{assumption:limgamma}), continuity and Fr\'echet differentiability (\cref{subsec::differentiability}, \ref{assumption.continuity}), and prove sequential weak-$\ast$ lower semicontinuity (\cref{subsec::semicontinuity}, \ref{assumptions:weakastconv}, \ref{assumption:14}).
We consider the mapping 
\begin{equation*}
E_\gamma:\quad L_{\exp}(\Omega) \to \mathbb R, \quad u \mapsto \gamma \ln\left(\frac{1}{|\Omega|} \int_\Omega \exp(\gamma^{-1} u(x))\mathrm d x\right).
\end{equation*}

\subsection{Dual Representation}
\label{subsec::dualrepresentation}

Our analysis makes use on the following dual representation of the essential supremum and its approximation.
\begin{lemma}
    Let $\Omega \subset \mathbb R^d$, $d \geq 1$, be a bounded domain and $u \in L_{\exp}(\Omega)$. Moreover, let
    \begin{equation*}
        \Pi \coloneqq \left\lbrace \pi \in L{\log}L(\Omega): \ B(\pi)< \infty, \ \frac{1}{|\Omega|}\int_\Omega \pi(x) \mathrm d x =1 \right\rbrace,
    \end{equation*}
    where 
    $B: L\log L(\Omega) \to \mathbb R \cup \lbrace + \infty \rbrace$  defined by
    \begin{equation*}
        B(\pi) \coloneqq \begin{cases} \frac{1}{|\Omega|} \int_{\Omega} \pi(x) \ln(\pi(x)) \mathrm{d}x \quad &\text{if } \pi(x) > 0 \text{ a.e.}, \\
            + \infty  & \text{otherwise,}
        \end{cases}
    \end{equation*}
    is a barrier for the constraint $\pi \geq 0$.
    Then, for $\gamma > 0$,
    \begin{align}
        E_\gamma(u) &= \sup_{\pi \in \Pi} \frac{1}{|\Omega|} \int_\Omega u(x) \pi(x) \mathrm d x - \gamma B(\pi),
        \label{dualrepresentatonofEgamma} \\ \shortintertext{and}
        \mathrm{ess\,sup}_\Omega(u) &= \sup_{\pi \in \Pi} \frac{1}{|\Omega|} \int_\Omega u(x) \pi(x) \mathrm d x.
        \label{esssup::eq}
    \end{align}
    \label{lem::asymptotic}
\end{lemma}

The proof of the lemma is built on \cite[Prop. 2.2, Prop. 2.3, C.3, pp. 527ff]{budhiraja2019analysis}. Before giving a proof, we set the statement into relation to results from the literature.

\begin{remark}
    \label{DonskerVaradhan}
    We first demonstrate that the definition of $B(\pi)$ is closely related to the concept of relative entropy introduced in \cite[Chap. 2]{budhiraja2019analysis}. Let $(\mathcal V, \mathcal A)$ be a measurable space and $\mathcal P(\mathcal V)$ be the set of all probability measures on $(\mathcal V, \mathcal A)$. For $\theta \in \mathcal P(\mathcal V)$, the relative entropy $R(\cdot \| \theta): \mathcal P(\mathcal V) \to \overline{\mathbb R}$ is defined via
    \begin{equation*}
        R(\rho \| \theta) = \begin{cases}
        \int_{\mathcal V} \log( \frac{d \rho}{d \theta}) d \rho \quad &\text{if } \rho \ll \theta, \\
        + \infty & \text{else},
        \end{cases}
    \end{equation*}
    where $\rho \ll \theta$ denotes that $\rho$ is absolutely continuous w.r.t. $\theta$.
    In our setting, $\mathcal V = \Omega$, $\mathcal A = \mathcal B(\Omega)$ denotes the Borel algebra on $\Omega$, and $\theta = \frac{1}{|\Omega|} \mathcal L \in \mathcal P(\mathcal V)$ is a probability measure, where $\mathcal L$ denotes the Lebesgue measure on $\Omega$. For $\pi \in L{\log}L(\Omega)$ such that $\frac{1}{|\Omega |} \int_{\Omega} \pi(x) \mathrm d x = 1$ and $\pi \geq 0$ a.e., $\rho$ defined via $\mathrm d \rho = \pi \, \mathrm d\theta$ is a probability measure. From this, it follows that (choosing the natural logarithm)
    \begin{equation*}
        R(\rho \| \theta) = \frac{1}{|\Omega |} \int_{\Omega} \pi(x) \ln(\pi(x)) \mathrm d x = B(\pi)
    \end{equation*}
    for all $\pi \in L{\log}L(\Omega)$ such that $\frac{1}{|\Omega|} \int_{\Omega} \pi(x) \mathrm d x = 1$ and $\pi \geq 0$ a.e..
    The Donsker--Varadhan variational formula \cite[Lem. 2.4(a)]{budhiraja2019analysis} yields that, for a Polish space $\mathcal X$, for each $\rho, \theta \in \mathcal P(\mathcal X)$, 
    \begin{equation*}
        R(\rho \| \theta) = \sup_{\psi \in \mathcal M_b(\mathcal X)} \left( \int_{\mathcal X} \psi \mathrm d \rho - \log\left(\int_{\mathcal X} \mathrm{exp}(\psi) \mathrm d \theta \right)\right),
    \end{equation*}
    where $\mathcal M_b(\mathcal X)$ denotes the space of bounded Borel-measurable functions from $\mathcal X$ to $\mathbb R$.
    Translating to our setting, $\mathcal X = \Omega$ is a Polish space as an open subset of a Polish space ($\mathbb R^{d}$ equipped with the Euclidean distance). 
\end{remark}

Before presenting the proof of \cref{lem::asymptotic}, we first present a direct consequence of the Donsker--Varadhan variational formula, cf. \cref{DonskerVaradhan}. For the sake of completeness, we present a direct proof of this assertion following \cite[Lem. 1.4.1 and Prop. 1.4.2]{dupuisellis}.

\begin{lemma}
    Let $\Omega \subset \mathbb R^d$, $d \geq 1$, be a bounded Lipschitz domain, $\pi \in \Pi$ and $\vela \in L^\infty(\Omega)$. Then
    it holds that 
    \begin{equation*}
        B(\pi) \geq \frac{1}{|\Omega|}\int_\Omega \vela(x) \pi(x) \mathrm d x - \ln\left(\frac{1}{|\Omega|} \int_\Omega \exp(\vela(x)) \mathrm d x\right).
    \end{equation*}
    \label{lem::donskvar_estim}
\end{lemma}

\begin{proof}
    We define 
    \begin{equation*}
        \pi_0(x) \coloneqq \frac{\exp(u(x))}{\frac1{|\Omega|} \int_\Omega \exp(u(x)) \mathrm d x}.
    \end{equation*}
    It holds that
    \begin{equation}
        B(\pi) = \frac{1}{|\Omega|} \int_\Omega \pi(x) \ln(\pi(x)) \mathrm{d} x = \frac{1}{|\Omega|}\int_\Omega \ln (\pi_0(x)) \pi(x)\mathrm dx + \frac{1}{|\Omega|} \int_\Omega \ln\left(\frac{\pi(x)}{\pi_0(x)} \right) \pi(x) \mathrm d x.
        \label{eq::reform1}
    \end{equation}
    We observe that, using the definition of $\pi_0$ and the fact that $\frac{1}{|\Omega|} \int_\Omega \pi(x) \mathrm dx = 1$, the first summand of \eqref{eq::reform1} can be written as
    \begin{equation*}
        \frac{1}{|\Omega|}\int_\Omega \ln (\pi_0(x)) \pi(x)\mathrm dx = \frac{1}{|\Omega|}\int_\Omega u(x) \pi(x)\mathrm dx - \ln\left(\frac{1}{|\Omega|} \int_\Omega \exp(u(x)) \mathrm d x\right).
    \end{equation*}
    For the second summand of \eqref{eq::reform1}, we use $s \ln(s) \geq s -1 $ for all $s > 0$ to obtain the estimate
    \begin{equation*}
        \frac{1}{|\Omega|}\int_\Omega \ln\left(\frac{\pi(x)}{\pi_0(x)}\right) \pi(x) \mathrm d x \geq \frac{1}{|\Omega|}\int_\Omega \left(\frac{\pi(x)}{\pi_0(x)} - 1 \right) \pi_0(x) \mathrm d x = 0,
    \end{equation*}
    since $\frac{1}{|\Omega|}\int_\Omega \pi(x) \mathrm d x = 1 = \frac{1}{|\Omega|}\int_\Omega \pi_0(x) \mathrm d x$. This completes the proof.
\end{proof}

\begin{proof}[Proof of \cref{lem::asymptotic}]
    Let $u \in L_{\exp}(\Omega)$ and $\gamma > 0$. In case of $\frac{1}{|\Omega|}\int_\Omega \exp(\gamma^{-1} u(x)) \mathrm d x = \infty$, for $N \in \mathbb N$, we consider $f_N: \Omega \to \mathbb R$ defined by $f_N(x) \coloneqq \min(N, \frac1\gamma u(x))$. Since $f_N$ is bounded from above, 
    \begin{equation*}
        \pi_N \coloneqq \frac{1}{Z_N} \exp(f_N), \quad Z_N \coloneqq \frac{1}{|\Omega|} \int_\Omega \exp(f_N(x)) \mathrm d x,
    \end{equation*}
    fulfills $B(\pi_N) < \infty$, and $\frac{1}{|\Omega|} \int_\Omega \pi_N(x)\mathrm d x = 1$. By Fatou's lemma and the assumption on $u$, we further know that $\lim_{N \to \infty}Z_N = \infty$. Hence, using that $f_N \leq \gamma^{-1} u(x)$, 
    \begin{equation*}
        \begin{aligned}
            \sup_{\pi \in \Pi} \left(\frac{1}{|\Omega|} \int_\Omega u(x) \pi(x) \mathrm d x - \gamma B(\pi) \right) 
                & \geq \frac{1}{|\Omega|} \int_\Omega u(x) \pi_N(x) \mathrm d x - \gamma B(\pi_N) \\
                & = \frac{\gamma}{|\Omega|} \int_\Omega (\gamma^{-1} u(x) - f_N(x)) \frac{\exp(f_N(x))}{Z_N} \mathrm d x + \gamma \ln(Z_N) \geq \gamma \ln(Z_N),
        \end{aligned}
    \end{equation*}
    which diverges to $\infty$ as $N \to \infty$. If $\frac{1}{|\Omega|} \int_\Omega \exp(\gamma^{-1} u(x)) \mathrm d x < \infty$, taking the limit and using Fatou's lemma implies that 
    \begin{equation*}
        \sup_{\pi \in \Pi} \left(\frac{1}{|\Omega|} \int_\Omega u(x) \pi(x) \mathrm d x - \gamma B(\pi)\right) \geq \gamma \ln\left( \frac{1}{|\Omega|} \int_\Omega \exp\left(\gamma^{-1} u(x)\right) \mathrm d x \right) = E_\gamma(u).
    \end{equation*}
    In order to show equality, it remains to prove that 
    \begin{equation}
        B(\pi) \geq \frac{1}{|\Omega|} \int_\Omega \gamma^{-1} u(x) \pi(x) \mathrm d x - \ln\left( \frac{1}{|\Omega|} \int_\Omega \exp\left( \gamma^{-1} u(x) \right) \mathrm d x \right)
        \label{eq::toprove}
    \end{equation}
    for all $\pi \in \Pi$. For arbitrary but fixed $M \in \mathbb N$, let $F_M: \Omega \to [-M, M]$ be defined by \begin{equation}
        F_M(x) \coloneqq \min(\max(\gamma^{-1} u(x), -M), M).
        \label{eq::definitionFM}
    \end{equation} Due to \cref{lem::donskvar_estim}, 
    \begin{equation*}
        B(\pi) \geq \frac{1}{|\Omega|} \int_\Omega F_M(x) \pi(x) \mathrm d x - \ln\left(\frac{1}{|\Omega|} \int_\Omega \exp\left(F_M(x)\right) \mathrm d x\right).
    \end{equation*}
    Let w.l.o.g. $\pi \in \Pi$ be such that $\int_\Omega F_M(x) \pi(x) \mathrm d x > -\infty$ (otherwise the statement holds directly). Then, using the dominated convergence theorem for the integrals
    \begin{equation*}
        \int_{\lbrace \gamma^{-1} u < 0 \rbrace} \exp\left( F_M(x) \right) \mathrm d x \quad\text{and}\quad \int_\Omega F_M(x) \pi(x) \mathrm dx,
    \end{equation*}
    and the monotone convergence theorem for the integral
    \begin{equation*}
        \int_{\lbrace \gamma^{-1} u \geq 0 \rbrace}\exp\left( F_M(x) \right) \mathrm d x
    \end{equation*}
    yields \eqref{eq::toprove}.
    Hence, we have shown that \eqref{dualrepresentatonofEgamma} holds. 
    
    In order to show \eqref{esssup::eq}, we observe that, for all $\pi \in \Pi$ and for $u \in L_{\exp}(\Omega)$ such that $\mathrm{ess\,sup}_\Omega(u) < \infty$, it holds that 
    \begin{equation}
        \frac{1}{|\Omega|} \int_\Omega u(x) \pi(x) \mathrm dx \leq \mathrm{ess\,sup}_\Omega (u).
        \label{eq::esssupestim}
    \end{equation}
    On the other hand, for every $ n \in \mathbb N$, there exists a measurable set $D_n \subset \Omega$ of positive measure such that $u(x) \geq \mathrm{ess\,sup}_\Omega (u) - \frac1n$ for all $x \in D_n$. The function $\pi_n \coloneqq \frac{1}{|D_n|} 1_{D_n}$ is an element of $\Pi$ and it holds that 
    \begin{equation*}
        \frac{1}{|\Omega|} \int_\Omega u(x) \pi_n (x) \mathrm d x \geq \mathrm{ess\, sup}_\Omega(u) - \frac{1}{n}
    \end{equation*}
    for all $n \in \mathbb N$. This, together with \eqref{eq::esssupestim}, implies \eqref{esssup::eq}, whenever $\mathrm{ess\,sup}_\Omega(u) < \infty$. If $\mathrm{ess\,sup}_\Omega(u) > \infty$, $D_n$ is chosen as a measurable set of positive measure such that $u(x) \geq n$ for all $x \in D_n$, which implies that $\sup_{\pi \in \Pi} \frac{1}{|\Omega|} \int_\Omega u(x) \pi(x) \mathrm d x \geq n$ for all $n \in \mathbb N$. This concludes the proof.
\end{proof}

\subsection{Convexity, monotonicity, and asymptotic behavior}
\label{subsec::cma}

We next collect some elementary properties of the approximation.
\begin{lemma}
    The mapping $E_\gamma: L_{\exp}(\Omega) \to \mathbb R\cup \lbrace \infty \rbrace$ is convex.
\end{lemma}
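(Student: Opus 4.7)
The plan is to decompose $\chi_\gamma$ into a strictly convex quadratic part plus two convex log-integral-of-exponential terms, and then to invoke the standard fact that the sum of a strictly convex function and finitely many convex functions is strictly convex on the effective domain. Writing
\[
\chi_\gamma(u) = \tfrac{1}{2}\|u\|_{L^2(\Omega)}^2 + F_\gamma^+(u) + F_\gamma^-(u), \qquad F_\gamma^\pm(u) \coloneqq \gamma \ln\Bigl(\tfrac{1}{|\Omega|}\int_\Omega \exp(\pm \gamma^{-1} u(x))\,\mathrm{d}x\Bigr),
\]
I would first note that since $\Omega$ is bounded and $\Phi_{\exp}(t)$ dominates $t^2$ on $[0,\infty)$, there is a continuous embedding $L_{\exp}(\Omega) \hookrightarrow L^2(\Omega)$, so the quadratic term takes finite values on $L_{\exp}(\Omega)$ and is strictly convex in the usual Hilbert-space sense.

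For the two log-exponential terms, convexity is a classical consequence of Hölder's inequality: for $\theta \in (0,1)$ and any $u,v \in L_{\exp}(\Omega)$ at which the respective integrals are finite, applying Hölder with conjugate exponents $1/\theta$ and $1/(1-\theta)$ gives
\[
\int_\Omega e^{\gamma^{-1}(\theta u + (1-\theta) v)}\,\mathrm{d}x = \int_\Omega \bigl(e^{\gamma^{-1} u}\bigr)^\theta \bigl(e^{\gamma^{-1} v}\bigr)^{1-\theta}\,\mathrm{d}x \leq \Bigl(\int_\Omega e^{\gamma^{-1} u}\,\mathrm{d}x\Bigr)^\theta \Bigl(\int_\Omega e^{\gamma^{-1} v}\,\mathrm{d}x\Bigr)^{1-\theta}.
\]
Exploiting $|\Omega|^{-1} = |\Omega|^{-\theta}|\Omega|^{-(1-\theta)}$ and applying the monotone map $\gamma \ln(\cdot)$ yields convexity of $F_\gamma^+$; the argument for $F_\gamma^-$ is identical after the substitution $u \mapsto -u$. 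When either integral is infinite, the convexity inequality holds trivially via the convention $a + \infty = \infty$.

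Finally, for $u \neq v$ in $L_{\exp}(\Omega)$ with both $\chi_\gamma(u)$ and $\chi_\gamma(v)$ finite, the strict inequality coming from $\tfrac12\|\cdot\|_{L^2}^2$ combined with the (non-strict) convexity estimates for $F_\gamma^\pm$ yields
\[
\chi_\gamma(\theta u + (1-\theta) v) < \theta \chi_\gamma(u) + (1-\theta) \chi_\gamma(v),
\]
which is strict convexity on the effective domain. I do not anticipate a substantive obstacle: the proof is essentially a combination of the classical convexity of the log-moment-generating functional with the strict convexity of a Hilbert-space norm squared. The only point requiring mild care is the extended-real-valued nature of $\chi_\gamma$ together with the use of $L_{\exp}(\Omega) \hookrightarrow L^2(\Omega)$ to guarantee finiteness of the quadratic term throughout $L_{\exp}(\Omega)$.
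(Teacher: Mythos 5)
Your proof is correct, but it takes a different route from the paper. The paper works with the variational (entropy-regularized) representation \eqref{eq:linfty:variant1:relaxed}: for each fixed $\pi$ the integrand is affine in $\vela$ and $\gamma B(\pi)$ is independent of $\vela$, so the second summand is a pointwise supremum of affine functions of $\vela$ and hence convex by \cite[Thm.~2.1.3]{Zalinescu2002}; adding the strictly convex quadratic term finishes the argument. You instead work with the explicit formula \eqref{formula_chigamma} and prove convexity of the two log-moment-generating terms directly via H\"older's inequality with exponents $1/\theta$ and $1/(1-\theta)$. Both arguments are standard and both are available here, since the paper establishes the equality of the two representations in \cref{lem::reformulation}; your version is more elementary and self-contained (no appeal to the dual representation or to the cited convex-analysis result), while the paper's is a one-line consequence of the representation it has already set up for other purposes. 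Two minor points: your claim that $\Phi_{\exp}(t)$ ``dominates $t^2$ on $[0,\infty)$'' is not literally true pointwise (e.g.\ $\Phi_{\exp}(2)=\eu<4$), but the embedding $L_{\exp}(\Omega)\hookrightarrow L^2(\Omega)$ on the bounded domain does hold because $t^2\leq \Phi_{\exp}(\lambda t)+C$ for suitable constants, so the conclusion you draw is fine; and your handling of the extended-real-valued case (strictness only required where both values are finite, trivial inequality otherwise) is the appropriate convention and matches what the paper implicitly uses.
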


\begin{proof}
    This follows from the convexity of $$\vela \mapsto \sup_{\substack{\pi \in L\log{}L(\Omega) :~B(\pi) < \infty,\\ \frac{1}{|\Omega|} \int_{\Omega} \pi(x) \mathrm{d}x = 1}} \frac{1}{|\Omega|} \int_{\Omega } \vela(x) \pi(\x) \mathrm{d}x - \gamma B(\pi)$$ due to the convexity of the pointwise supremum of convex functions, and using the fact that the first summand is linear in $u$ and $\gamma B(\pi)$ does not depend on $u$.
\end{proof}

\begin{lemma}
    Let $u \in L_{\exp}(\Omega)$ and $\gamma_0 \geq \gamma > 0$, then we obtain $E_\gamma (\vela) \geq E_{\gamma_0}(\vela)$.
    \label{lem::gamma}
\end{lemma}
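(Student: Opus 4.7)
The proof should reduce to a well-known monotonicity property of the entropic risk measure. The first step is to remove the $\tfrac12\|u\|_{L^2(\Omega)}^2$ term, which is independent of $\gamma$, and to rewrite the remainder using \eqref{eq::reformsum_doubleint}. Setting $h(x_1,x_2) \coloneqq u(x_1) - u(x_2)$ and letting $\mu$ denote the product probability measure $\tfrac{1}{|\Omega|^2}\mathrm{d}(x_1,x_2)$ on $\Omega \times \Omega$, the claim reduces to
\begin{equation*}
    \gamma \ln \int_{\Omega \times \Omega} \exp(\gamma^{-1} h(x_1,x_2)) \,\mathrm{d}\mu \;\geq\; \gamma_0 \ln \int_{\Omega \times \Omega} \exp(\gamma_0^{-1} h(x_1,x_2)) \,\mathrm{d}\mu
\end{equation*}
for $0 < \gamma \leq \gamma_0$, with the convention that both sides take the value $+\infty$ whenever the corresponding integral diverges.

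Next, I would dispose of the degenerate case. If $\int_{\Omega\times\Omega} \exp(\gamma^{-1} h)\,\mathrm{d}\mu = +\infty$, then by \cref{lem::reformulation} we have $\chi_\gamma(u) = +\infty$ and the inequality is trivial. Assume therefore that this integral is finite.

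The main step is a one-line application of Jensen's inequality. Set $p \coloneqq \gamma_0/\gamma \geq 1$. Since the function $t \mapsto t^{1/p}$ is concave on $[0,\infty)$, Jensen's inequality for the probability measure $\mu$ gives
\begin{equation*}
    \int_{\Omega \times \Omega} \bigl(\exp(\gamma^{-1} h)\bigr)^{1/p} \,\mathrm{d}\mu \;\leq\; \left(\int_{\Omega \times \Omega} \exp(\gamma^{-1} h) \,\mathrm{d}\mu\right)^{1/p}.
\end{equation*}
Since $(\exp(\gamma^{-1} h))^{1/p} = \exp(\gamma_0^{-1} h)$, taking the logarithm and multiplying by $\gamma_0$ yields exactly the desired inequality. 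Adding the common $\tfrac12\|u\|_{L^2(\Omega)}^2$ back in concludes the proof.

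I do not expect a serious obstacle here: the argument is entirely elementary and relies only on the concavity of $t \mapsto t^{1/p}$ for $p \geq 1$ (equivalently, on the monotonicity of $L^p$-norms on a probability space). The only minor care needed concerns the convention in the possibly-infinite case, which is already covered by \cref{lem::reformulation}.
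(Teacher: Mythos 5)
Your proof is correct and takes essentially the same route as the paper: both reduce to the double-integral form via \eqref{eq::reformsum_doubleint} and apply Jensen's inequality with the concave map $t \mapsto t^{\gamma/\gamma_0}$ on the product probability space. Your explicit handling of the case where the integral diverges is a small (welcome) addition that the paper leaves implicit.
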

\begin{proof}
    It holds with Jensen's inequality (the mapping $[0,\infty) \to \mathbb R$, $x \mapsto x^\frac{\gamma}{\gamma_0}$ is concave) and due to the monotonicity of the logarithm that
    \begin{equation*}
        \begin{aligned}
            \gamma \ln\left(\frac{1}{|\Omega|}\int_{\Omega} \exp(\gamma^{-1} u(x)) \mathrm{d}x \right)
            & \geq \ln\left(\left(\frac{1}{|\Omega|}\int_{\Omega } \exp({\gamma^{-1} u(x)})^\frac{\gamma}{\gamma_0} \mathrm{d}x \right)^{\gamma_0}\right)
            \\
            & = \gamma_0 \ln\left(\frac{1}{|\Omega|}\int_{\Omega} \exp(\gamma_0^{-1} u(x)) \mathrm{d}x \right),
        \end{aligned}
    \end{equation*}
    from which the claim follows.
\end{proof}

\begin{remark}
    For fixed $\gamma > 0$, the space of functions for which $\ln( \frac1{|\Omega|} \int_\Omega \exp(\gamma^{-1} \vela(x)) \mathrm d x )$ is bounded is larger than
    $L^\infty(\Omega)$. Consider, e.g., $\vela: (0,1) \to \mathbb R, x \mapsto \ln(x^{-\lambda})$ for $\lambda > 0$ such that $\frac{\lambda}\gamma <1$.
    We have $\vela \notin L^\infty((0,1))$. However, $\int_0^1 \exp(\frac1\gamma \vela(x)) \mathrm d x = \int_0^1 \exp(\frac1\gamma \ln(x^{-\lambda})) \mathrm d x = \int_0^1 x^\frac{-\lambda}\gamma \mathrm d x = \frac{\gamma}{\gamma - \lambda}$. Therefore, $\gamma \ln( \frac1{|\Omega|} \int_\Omega \exp(\frac1\gamma \vela(x)) \mathrm d x )$ is bounded.
\end{remark}

We can now show the asymptotic behavior of the approximation.
\begin{lemma}
    Let $\Omega \subset \mathbb R^d$, $d \geq 1$, be a bounded domain.
    Then for all $\vela \in L_{\exp}(\Omega)$, we have
    \begin{equation}
        E_\gamma(\vela)
        \to \mathrm{ess~sup}_{\Omega}(\vela)
        \quad \text{as} \quad \gamma \to 0^{+},
        \quad
        \text{and}
        \quad
        E_\gamma(\vela)
        \to \frac{1}{|\Omega|} \int_{\Omega} \vela(x) \, \mathrm{d} x
        \quad \text{as} \quad \gamma \to \infty,
        \label{eq::limit}
    \end{equation}
    and
    for all $\gamma > 0$,
    \begin{equation*}
        \frac{1}{|\Omega|} \int_{\Omega} \vela(x) \, \mathrm{d} x\leq E_\gamma(\vela)
        \leq \mathrm{ess~sup}_{\Omega}(\vela).
    \end{equation*}
    \label{estimates::lem}
\end{lemma}

\begin{proof}
    Due to \Cref{lem::gamma} it suffices to show that \eqref{eq::limit} holds. Analogously to \cite[Example 6.20]{Shapiro2021}, the limit $\gamma \to \infty$ is obtained by applying L'H\^opital's rule to the mapping 
    \begin{equation*}
        \gamma \mapsto \frac{\ln\left(\frac{1}{|\Omega|}\int_{\Omega} \exp(\gamma^{-1} u(x)) \mathrm{d}x \right)}{\frac1\gamma} = E_\gamma(\vela).
    \end{equation*}
    For the limit $\gamma \to 0^+$, we consider the representation of $E_\gamma$ given in \eqref{dualrepresentatonofEgamma}.
    Let $u \in L_{\exp}(\Omega)$ be such that $\mathrm{ess\,sup}_\Omega (u) < \infty$.
    On the one hand, we know that, for $u \in L_{\exp}(\Omega)$ such that $\mathrm{ess\,sup}_\Omega(u) < \infty$, 
    \begin{equation*}
        E_\gamma(u) \leq \mathrm{ess\,sup}_\Omega (u) + \gamma\, \mathrm{sup}_{\pi \in \Pi}  (-B(\pi)).
    \end{equation*}
    Since $\sup_{\pi \in \Pi} (-B(\pi))$ is bounded from above, taking the limit $\gamma \to 0^+$ yields
    \begin{equation}
        \limsup_{\gamma \to 0^+} E_\gamma(u) \leq \mathrm{ess\,sup}_\Omega (u).
        \label{eq::upperbound}
    \end{equation}
    Let $(\pi_n)_{n \in \mathbb N} \subset \Pi$ be such that $\lim_{n \to \infty} \frac{1}{|\Omega|} \int_{\Omega} u(x) \pi_n(x)\mathrm d x = \mathrm{ess\,sup}_\Omega(u)$. Then, since $B(\pi_n) < \infty$ for all $n \in \mathbb N$, we know that there exists a monotonically increasing sequence $(N_k)_{k \in \mathbb N} \in \mathbb N$ and a monotonically decreasing sequence $(\gamma_k)_{k\in \mathbb N} \subset (0,\infty)$ such that 
    \begin{equation*}
        E_{\gamma_k} (u) \geq \mathrm{ess\,sup}_\Omega(u) - \frac1k.
    \end{equation*}
    Taking the limit $k \to \infty$ and using monotonicity of $\gamma \to E_\gamma(u)$ (see \cref{lem::gamma}) yields 
    \begin{equation*}
        \liminf_{\gamma \to 0^+} E_{\gamma}(u) \geq \mathrm{ess\,sup}_\Omega(u)
    \end{equation*}
    and, with \eqref{eq::upperbound},
    \begin{equation*}
        \lim_{\gamma \to 0^+} E_{\gamma}(u) = \mathrm{ess\,sup}_\Omega(u)
    \end{equation*}
    With similar arguments it can also be shown that $\lim_{\gamma \to 0^+} E_{\gamma}(u) = \infty$ if $\mathrm{ess\,sup}_\Omega(u) = \infty$.
\end{proof}

\begin{remark}
    Since $\Omega$ is bounded, $\mu$ defined by
    $\mathrm{d}\mu \coloneqq (1/|\Omega|) \mathrm{d}x$ (cf. \cref{remark::mudef})
    is a probability measure on $\Omega$.
    Hence $\vela \mapsto E(\vela)$
    defined on the space of measurable mappings on $\Omega$
    equals the entropic risk measure
    $\vela \mapsto  \gamma \ln \mathbb{E}_{\mu}[\exp( \gamma^{-1} \vela)]$
    defined on the space of measurable mappings on $\Omega$ equipped with $\mu$,
    where $\mathbb{E}_\mu$ is the expectation with respect to $\mu$.
    The results are therefore related to the results derived in \cite[Example 6.20]{Shapiro2021}.
\end{remark}

\subsection{Continuity and Fr\'echet differentiability}
\label{subsec::differentiability}

The next lemma shows that the approximation is continuous and -- unlike the essential supremum -- even Fréchet differentiable in $L^\infty$.
\begin{lemma}
    The mapping 
    \begin{equation*}
    E_\gamma: ~L^\infty(\Omega) \to \mathbb R, \quad  \vela \mapsto \gamma \ln\left( \frac{1}{|\Omega|}\int_\Omega \exp(\gamma^{-1} \vela(x)) \mathrm{d}x\right)
    \end{equation*}
    is continuous and Fr\'echet differentiable. Moreover, for $r \in [1,\infty]$ and
    for any sequence $(\vela_k)_{k \in \mathbb N} \subset L^\infty(\Omega)$ with bounded $(\|\vela_k\|_{L^\infty(\Omega)})_{k \in \mathbb N}$ and $\lim_{k\to \infty} \| \vela_k - \vela \|_{L^r(\Omega)} = 0$ for $\vela \in L^\infty(\Omega)$ it follows that $$\lim_{k \to \infty} E_\gamma(\vela_k) = E_\gamma(\vela).$$
    \label{remark::continuity}
\end{lemma}
\begin{proof}

    We observe that $E_\gamma = F_1 \circ F_2 \circ F_3$ with
    \begin{equation*}
        \begin{aligned}
        & F_1: (0,\infty) \to \mathbb R, \qquad\qquad &&z \mapsto \gamma \ln(z), \\
        & F_2: L^1(\Omega) \to \mathbb R, \qquad \qquad &&f \mapsto \frac{1}{|\Omega|}\int_\Omega f(x) \mathrm d x, \\
        & F_3: L^\infty(\Omega) \to L^\infty(\Omega), \quad\ &&\vela \mapsto \exp({\gamma^{-1} \vela(x)}).
        \end{aligned}
    \end{equation*}
    Due to \cite[Lem.~4.11 and Lem.~4.12]{troltzsch2010optimal}, the Nemytskii operator $F_3$ is continuous and Fr\'echet differentiable. Moreover, for any $M \in \mathbb R$ and $r \in [1,\infty]$, and all $\vela_1, \vela_2 \in L^\infty(\Omega)$ with $\|\vela_1\| \leq M$, $\|\vela_2 \|\leq M$, we have
    \begin{equation*}
        \| F_3(\vela_1) - F_3(\vela_2) \|_{L^r(\Omega)} \leq L_M \| \vela_1 - \vela_2\|_{L^r(E)},
        \label{continuity::nemytskii}
    \end{equation*}
    where $L_M$ denotes a constant depending on the choice of $M$. Since $F_3(\vela)(x) > 0$ for all $x\in \Omega$ and $\vela \in L^\infty(\Omega)$, $F_2 \circ F_3 (\vela) > 0$, and the composition is well-defined. Moreover, $F_1$ and $F_2$ are continuous and Fr\'echet differentiable.
\end{proof}

\begin{remark}
    Given $\gamma > 0$, the function $E_\gamma: L_{\exp}(\Omega) \to \mathbb R\cup \lbrace \infty \rbrace$ is not continuous.
    Let $\vela \in L_{\exp}(\Omega)$ for which $\int_\Omega \exp(\gamma^{-1} \vela(x))\mathrm d x < \infty$, then there exists in general no open $L_{\exp}(\Omega)$ neighborhood $U$ around $u$ such that $\int_\Omega \exp(\gamma^{-1} \tilde \vela(x)) \mathrm{d}x < \infty$ for all $\tilde \vela \in U$. For example, for $\Omega = (0, \frac12)$, $\vela(x)\coloneqq \gamma \ln( (\ln(x)^2 x)^{-1})$ and $h_\alpha(x) \coloneqq \alpha \vela(x)$ it holds that $ u(x) + h_\alpha(x) = \gamma \ln ( (\ln(x)^2 x)^{-1-\alpha}) $. Hence, $\int_\Omega \exp({\gamma^{-1} (\vela(x) + h_\alpha(x))}) \mathrm{d}x = \infty$ for any $\alpha > 0$.\label{remark::discont}
\end{remark}

\subsection{Sequential \texorpdfstring{weak$^\ast$}{weak-star} lower semicontinuity}
\label{subsec::semicontinuity}

We now consider the smooth approximation of an $L^\infty$-seminorm, which we recall from \eqref{formula_chigamma} is given by
\begin{equation*}
    \chi_\gamma(\vela) \coloneqq \gamma \ln\left(\frac{1}{|\Omega|}\int_{\Omega} \exp({\gamma^{-1} \vela(x)}) \mathrm{d} x\right) + \gamma \ln\left(\frac{1}{|\Omega|}\int_\Omega \exp({-\gamma^{-1} \vela(x)}) \mathrm{d}x\right),
\end{equation*}
We first show a coercivity property.
\begin{lemma}
    Let $\Omega \subset \mathbb R^d$, $d \geq 1$, be a bounded domain, and $\gamma >0$.
    Let $(\vela_n)_{n \in \mathbb N} \subset L^2(\Omega)$ be such that $\left(\chi_\gamma(\vela_n) + \frac12 \| \vela_n\|_{L^2(\Omega)}^2\right)_{n \in \mathbb N}$ is bounded. Then $\left(\| \vela_n \|_{L_{\exp}(\Omega)}\right)_{n \in \mathbb N}$ is bounded.
    \label{lem::seqweak_mod}
\end{lemma}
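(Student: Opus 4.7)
The plan is to bound each summand of $\chi_\gamma(u_n)$ separately and then translate an integral estimate into a Luxemburg-norm bound via \cref{lem::maxnorm}.

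First I would split $\chi_\gamma(u_n) = \tfrac12\|u_n\|_{L^2(\Omega)}^2 + E_1(u_n) + E_2(u_n)$, where $E_1, E_2$ denote the two log-exponential terms. A key observation (already used in \cref{estimates::lem}) is that, by Jensen's inequality applied to the convex function $\exp$,
\begin{equation*}
E_1(u_n) \geq \bar u_n, \qquad E_2(u_n) \geq -\bar u_n,
\end{equation*}
where $\bar u_n \coloneqq \tfrac{1}{|\Omega|}\int_\Omega u_n\,\mathrm{d}x$. In particular $E_1 + E_2 \geq 0$, so the assumption $\chi_\gamma(u_n) \leq C$ immediately yields $\|u_n\|_{L^2(\Omega)}^2 \leq 2C$ and, via Cauchy--Schwarz, a uniform bound on $|\bar u_n|$.

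Next, I would isolate upper bounds on the individual log-exponential terms. From $E_1 + E_2 \leq C$ combined with the Jensen lower bound on the opposite term, we get $E_1, E_2 \leq C + |\bar u_n|$, which is uniform in $n$. Exponentiating then yields a constant $M > 0$ with
\begin{equation*}
\frac{1}{|\Omega|}\int_\Omega \eu^{\pm u_n(x)/\gamma}\,\mathrm{d}x \leq M,
\end{equation*}
and the pointwise inequality $\eu^{|t|} \leq \eu^{t} + \eu^{-t}$ upgrades this to $\int_\Omega \eu^{|u_n|/\gamma}\,\mathrm{d}\mu \leq 2M$ (with $\mathrm{d}\mu = |\Omega|^{-1}\,\mathrm{d}x$ as in \cref{remark::mudef}).

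Finally, since $\Phi_{\exp}(t) \leq \eu^{t}$ for all $t \geq 0$ (immediate from the two cases in the definition of $\Phi_{\exp}$), we obtain $\int_\Omega \Phi_{\exp}(|u_n|/\gamma)\,\mathrm{d}\mu \leq 2M$. Applying \cref{lem::maxnorm} to the rescaled function $u_n/\gamma$ and invoking the homogeneity $\|u_n/\gamma\|_{\Phi_{\exp}} = \gamma^{-1}\|u_n\|_{\Phi_{\exp}}$ of the Luxemburg norm yields $\|u_n\|_{L_{\exp}(\Omega)} \leq \gamma \max(2M,1)$, uniform in $n$. The main obstacle is the second step: a priori only the sum $E_1 + E_2$ is directly controlled by $\chi_\gamma$, so extracting separate upper bounds on each log term requires using the $L^2$-bound to control $\bar u_n$ and then exploiting the Jensen lower bound on the other term. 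The remaining parts reduce to the elementary comparison $\Phi_{\exp}(t) \leq \eu^{t}$ and a direct application of \cref{lem::maxnorm}.
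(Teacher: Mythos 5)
Your proof is correct and follows essentially the same route as the paper: Jensen's inequality gives the nonnegativity of the sum of the two log-exponential terms (hence the $L^2$-bound), the lower bound on each log term then yields an upper bound on the other, and exponentiating combined with \cref{lem::maxnorm} and the homogeneity of the Luxemburg norm finishes the argument. The only (harmless) deviation is that you use the single comparison $\Phi_{\exp}(t)\leq \eu^{t}$ where the paper splits $\Phi_{\exp}(|t|)\leq |t| + \eu^{|t|-1}$ and treats the linear part separately via H\"older.
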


\begin{proof}
    Due to the properties of the norm, it holds
    \begin{equation}
        \| \vela_n \|_{L_{\exp}(\Omega)} = \gamma \| \gamma^{-1} \vela_n \|_{L_{\exp}(\Omega)}.
        \label{eq::proof::1}
    \end{equation}
    Hence, due to \cref{lem::maxnorm} it suffices to show that $\frac{1}{|\Omega|}\int_\Omega \Phi_{\exp}(|\frac1\gamma \vela_n|) \mathrm d x$ is bounded.
    Due to \cref{estimates::lem}, we know that
    \begin{equation}
        \begin{multlined}[t]
            \chi_\gamma(\vela_n) = \gamma \ln\left(\frac{1}{|\Omega|}\int_\Omega \exp({\gamma^{-1} \vela_n(x)}) \mathrm d x \right) + \gamma \ln\left(\frac{1}{|\Omega|}\int_\Omega \exp({-\gamma^{-1} \vela_n(x)}) \mathrm d x \right)\\ \geq \frac{1}{|\Omega|} \int_\Omega u_n(x)\mathrm d x + \frac{1}{|\Omega|} \int_\Omega - u_n(x)\mathrm d x = 0.
        \end{multlined}
    \label{eq::proof::2}
    \end{equation}
    Therefore, boundedness of $\left(\chi_\gamma (\vela_n) + \frac12 \| \vela_n\|_{L^2(\Omega)}^2\right)_{n \in \mathbb N}$ yields a constant $C > 0$ such that
    \begin{equation}
        \frac12 \| \vela_n\|_{L^2(\Omega)}^2 \leq C \text{ and } \chi_\gamma (\vela_n) \leq C
        \label{L2estim}
    \end{equation}
    for all $n \in \mathbb N$.

    Next, we observe that, due to the definition of $\Phi_{\exp}$,
    \begin{equation}
        \frac{1}{|\Omega|}\int_\Omega \Phi_{\exp}(|\gamma^{-1} \vela_n|) \mathrm d x \leq \frac{1}{|\Omega|}\int_\Omega |\gamma^{-1} \vela_n| \mathrm{d} x + \frac{1}{|\Omega|}\int_\Omega \exp({|\gamma^{-1} \vela_n(x)| - 1}) \mathrm d x
        \label{eq::proof::3}
    \end{equation}
    By \eqref{L2estim} and Hölder's inequality, we know that the first term is bounded:
    \begin{equation}
        \frac{1}{|\Omega|}\int_\Omega |\gamma^{-1} \vela_n (x)| \mathrm{d} x \leq \frac{1}{\gamma |\Omega|^{1/2}} \| \vela_n\|_{L^2(\Omega)} \leq \frac{\sqrt{C}}{\sqrt{2} \gamma |\Omega|^{1/2}}.
        \label{eq::proof::4}
    \end{equation}
    Moreover, since we know that
    \begin{equation}
        \begin{aligned}[t]
            \frac{1}{|\Omega|}\int_\Omega \exp({|\gamma^{-1} \vela_n(x)|-1}) \mathrm d x &\leq \frac{1}{\eu |\Omega|} \int_\Omega \exp({\gamma^{-1} \vela_n(x)}) + \exp({-\gamma^{-1} \vela_n(x)}) \mathrm d x \\
            &= \frac{1}{\eu|\Omega|} \int_\Omega \exp({\gamma^{-1} \vela_n(x)}) \mathrm{d} x + \frac{1}{\eu|\Omega|} \int_\Omega \exp({-\gamma^{-1} \vela_n(x)}) \mathrm d x,
        \end{aligned}
        \label{eq::proof::5}
    \end{equation}
    where $\eu = \mathrm{exp}(1)$, it suffices to bound the latter two terms.
    Starting with \eqref{formula_chigamma}, we obtain with \cref{estimates::lem}
    \begin{equation}
        \begin{aligned}[t]
            \gamma \ln\left(\frac{1}{|\Omega|} \int_\Omega \exp({\gamma^{-1} \vela_n(x)}) \mathrm{d} x\right)  &\leq \chi_\gamma(\vela_n) - \frac12 \|\vela_n\|_{L^2(\Omega)}^2 - \gamma \ln\left(\frac{1}{|\Omega|} \int_\Omega \exp({- \gamma^{-1} \vela_n(x)}) \mathrm{d} x\right) \\
            &\leq \chi_\gamma(\vela_n) - \frac12 \|\vela_n\|_{L^2(\Omega)}^2 + \frac{1}{|\Omega|}\int_\Omega  |\vela_n(x)| \mathrm{d} x ,
        \end{aligned}
        \label{eq::proof::6}
    \end{equation}
    and, analogously,
    \begin{equation}
        \gamma \ln\left(\frac{1}{|\Omega|}\int_\Omega \exp(-\gamma^{-1} \vela_n(x)) \mathrm{d} x\right) \leq \chi_\gamma(\vela_n) - \frac12 \|\vela_n\|_{L^2(\Omega)}^2 + \frac{1}{|\Omega|}\int_\Omega  |\vela_n(x)| \mathrm{d} x.
        \label{eq::proof::7}
    \end{equation}
    Therefore, combining \eqref{eq::proof::5}, \eqref{eq::proof::6} and \eqref{eq::proof::7}, and due to \eqref{L2estim} and \eqref{eq::proof::4},
    \begin{equation}
        \begin{aligned}[t]
            \frac{1}{|\Omega|}\int_\Omega \exp({|\gamma^{-1} \vela_n(x)|-1}) \mathrm d x
            &\leq \frac{2}{\eu } \exp\left(\gamma^{-1} \left(\chi_\gamma(\vela_n) - \frac12 \|\vela_n\|_{L^2(\Omega)}^2 + \frac{1}{|\Omega|}\int_\Omega  |\vela_n(x)| \mathrm{d} x \right)\right) \\
            &\leq \frac{2}{\eu} \exp\left(\gamma^{-1} \left(C + \frac{\sqrt{C}}{\sqrt{2} |\Omega|^{1/2}} \right)\right) =: \tilde C.
        \end{aligned}
        \label{eq::proof::8}
    \end{equation}
    Combining \eqref{eq::proof::1}, \cref{lem::maxnorm}, \eqref{eq::proof::3} and \eqref{eq::proof::8} yields
    \begin{equation*}
        \| \vela_n \|_{L_{\exp}(\Omega)} \leq \gamma \left( 1 + \frac{1}{|\Omega|} \int_\Omega \Phi_{\exp} \left(|\gamma^{-1} \vela_n(x)|\right) \mathrm d x  \right) \leq \gamma\left( 1 +  \frac{\sqrt{C}}{\sqrt{2}\gamma|\Omega|^{1/2}} +  \tilde C\right)
    \end{equation*}
    for all $n \in \mathbb N$.
\end{proof}

Finally, we show weak$^*$ lower semicontinuity of $\chi$ in $L_{\exp}$.
\begin{lemma}
    Let $(\vela_n)_{n \in \mathbb N} \subset L_{\exp}(\Omega)$ be such that there exists a constant $C> 0$ with $\| \vela_n \|_{L_{\exp}(\Omega)} \leq C$, $\gamma > 0$ and $\chi_\gamma$ be defined by \eqref{formula_chigamma}. Then there exists a subsequence $(\vela_n)_{n \in K}$, $K \subset \mathbb N$, and $\vela \in L_{\exp}(\Omega)$ such that $\vela_n \rightharpoonup^\ast \vela$ in $L_{\exp}(\Omega)$ and $\chi_\gamma(\vela) \leq \liminf_{K \ni n \to \infty} \chi_\gamma(\vela_n)$.
    \label{lem::lsc_exp}
\end{lemma}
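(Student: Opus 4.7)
The overall strategy is to combine sequential Banach--Alaoglu on the duality $\Lexp(\Omega) = (\LlogL(\Omega))^{\ast}$ with the variational representation from \cref{lem::reformulation}. Since $\LlogL(\Omega)$ is separable by \cref{lemma::5} and $\Lexp(\Omega)$ is its dual by \cref{predualLexp}, the bounded sequence $(\vela_n)$ admits a weak$^\ast$ convergent subsequence $(\vela_n)_{n\in K}$ with some limit $\vela\in\Lexp(\Omega)$. I may assume $\liminf_{K\ni n\to\infty}\chi_\gamma(\vela_n)<\infty$ (otherwise the claim is trivial) and, by thinning $K$ further, that $(\chi_\gamma(\vela_n))_{n\in K}$ converges to this liminf.

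I would then decompose $\chi_\gamma = \chi_\gamma^{(1)} + \chi_\gamma^{(2)}$ with $\chi_\gamma^{(1)}(\vela) \coloneqq \tfrac12\|\vela\|_{L^2(\Omega)}^2$ and $\chi_\gamma^{(2)}$ equal to the sum of the two logarithmic terms in \eqref{formula_chigamma}, prove weak$^\ast$ lower semicontinuity of each piece, and combine via $\liminf(a_n+b_n)\geq \liminf a_n+\liminf b_n$. For $\chi_\gamma^{(1)}$: since $\Phi_{\exp}$ eventually dominates any polynomial, $\Lexp(\Omega)\hookrightarrow L^2(\Omega)$ on the bounded domain $\Omega$, so $(\vela_n)$ is bounded in $L^2$ and, up to a further subsequence, $\vela_n\rightharpoonup\tilde\vela$ in $L^2(\Omega)$. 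Because $L^2(\Omega)\hookrightarrow \LlogL(\Omega)$ on a bounded domain, any test function $\phi\in L^2$ lies in the predual $\LlogL$; matching the two modes of convergence forces $\tilde\vela=\vela$, and weak lower semicontinuity of $\|\cdot\|_{L^2(\Omega)}^2$ provides the desired estimate.

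For $\chi_\gamma^{(2)}$: by \cref{lem::reformulation} combined with identity \eqref{eq::reformsum_doubleint}, $\chi_\gamma^{(2)}(\vela)$ is the supremum over admissible densities $\pi$ of the map $\vela\mapsto \frac{1}{|\Omega|^2}\int_{\Omega\times\Omega}(\vela(x_1)-\vela(x_2))\pi\,\mathrm{d}(x_1,x_2) - \gamma B(\pi)$. Fubini rewrites each such functional as $\frac{1}{|\Omega|}\int_\Omega \vela(x)\bigl(\pi_1(x)-\pi_2(x)\bigr)\,\mathrm{d}x - \gamma B(\pi)$, where $\pi_1(x)\coloneqq\frac{1}{|\Omega|}\int\pi(x,y)\,\mathrm{d}y$ and $\pi_2$ is the analogous second marginal. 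The main obstacle is verifying that $\pi_1,\pi_2\in \LlogL(\Omega)$, since only then is $\pi_1-\pi_2$ an element of the predual of $\Lexp(\Omega)$ and the affine map above weak$^\ast$-continuous: Jensen's inequality applied to the convex function $t\mapsto t\ln t$ gives $\int_\Omega \pi_i\ln\pi_i\,\mathrm{d}x\leq \frac{1}{|\Omega|}\int_{\Omega\times\Omega}\pi\ln\pi\,\mathrm{d}(x_1,x_2) = |\Omega|\,B(\pi)<\infty$, which together with the elementary bound $t\ln t\geq -\mathrm{e}^{-1}$ on $[0,1]$ and the $L^1$-control of $\pi_i$ upgrades to $\int_\Omega\pi_i\ln^{+}\pi_i\,\mathrm{d}x<\infty$. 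Consequently $\chi_\gamma^{(2)}$ is the pointwise supremum of weak$^\ast$-continuous affine functionals and hence weak$^\ast$ sequentially lower semicontinuous, closing the argument.
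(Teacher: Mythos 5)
Your proof is correct and follows essentially the same route as the paper: extract a weak$^\ast$ convergent subsequence via Banach--Alaoglu and the separability of the predual $L{\log}L(\Omega)$, then use the representation of the logarithmic part as a supremum over $\pi$ of functionals that are affine and weak$^\ast$ continuous in $\vela$. You are in fact more careful than the paper on two points it glosses over, namely verifying via Jensen's inequality that the marginals of $\pi$ lie in $L{\log}L(\Omega)$ so that each affine functional is genuinely weak$^\ast$ continuous, and treating the $L^2$ term separately by weak lower semicontinuity after identifying the weak $L^2$ limit with the weak$^\ast$ limit in $L_{\exp}(\Omega)$.
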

\begin{proof}
    The proof is similar to \cite[Proof of Thm.~2.3]{AcarVogel}.
    Due to boundedness of $(\vela_n)_{n \in \mathbb N}$ in $L_{\exp}(\Omega)$ and Banach--Alaoglu's theorem and the separability of the pre-dual $L{\log}L(\Omega)$ of $L_{\exp}(\Omega)$, we know that $(\vela_n)_{n \in \mathbb N}$ contains a subsequence $(\vela_n)_{n \in K}$, $ K \subset \mathbb N$, that converges $L_{\exp}(\Omega)$-weakly$^\ast$ to $\vela \in L_{\exp}(\Omega)$.
    We exemplarily show the argumentation for the first summand of $\chi_\gamma$.    
    Let $\pi \in \Pi$
    be arbitrarily chosen.
    Then we know that
    \begin{equation*}
        \begin{aligned}
            \frac{1}{|\Omega|}\int_{\Omega } \vela(x)  \pi(x) \mathrm{d}x - \gamma B(\pi) &= \lim_{K \ni n \to \infty} \frac{1}{|\Omega|}\int_{\Omega }\vela_n(x) \pi(x) \mathrm{d}x - \gamma B(\pi) \\
                 &= \liminf_{K \ni n \to \infty} \frac{1}{|\Omega|}\int_{\Omega }\vela_n(x)) \pi(x) \mathrm{d}x - \gamma B(\pi) \\
                 & \leq \liminf_{K \ni n \to \infty} \sup_{\pi \in \Pi}\frac{1}{|\Omega|}\int_{\Omega }\vela_n(x)) \pi(x) \mathrm{d}x - \gamma B(\pi)
                 = \liminf_{K \ni n \to \infty} E_\gamma(\vela_n).
        \end{aligned}
    \end{equation*}
    Taking the supremum over all $\pi \in \Pi$ and doing a similar argumentation for the second summand yields the statement.
\end{proof}

\subsection{Approximation of \texorpdfstring{$\scriptstyle W^{1,\infty}_0(\Omega)^d$}{W1infty}-norm}
\label{subsec::w1infty}

We consider $W^{1,\infty}_0(\Omega)^d \coloneqq \lbrace v \in W^{1,\infty}(\Omega)^d~:~ \mathrm{tr}(v) = 0 \rbrace$, where $\mathrm{tr}$ denotes the trace, as a closed linear subspace of $W^{1,\infty}(\Omega)^d$ equipped with the norm
\begin{equation}
        \|v \|_{W^{1,\infty}_0(\Omega)^d} \coloneqq \sum_{j=1}^d \sum_{k=0}^1 \left(\mathrm{ess~sup}_{\Omega}((-1)^k v_j ) + \sum_{i=1}^d \mathrm{ess~sup}_{\Omega}((-1)^k \partial_i v_j)\right).
    \label{w1inftynorm}
\end{equation}
Note that working with a definition of $W^{1,\infty}_0(\Omega)^d$ requires caution since different non-equivalent notions are used, see \cite[Remark 10.10]{Leoni}. 

We approximate \eqref{w1inftynorm} via
\begin{equation}
    \begin{multlined}[t]
        \Psi_{\gamma}(v) \coloneqq   \sum_{j =1}^d  \sum_{k=0}^1  \Bigg( \gamma \ln\Big( \frac{1}{|\Omega|}\int_{\Omega } \exp\left(\gamma^{-1}{(-1)^k} v_j(x) \right)\mathrm{d} x\Big) \\ +  \sum_{i=1}^d \gamma \ln\Big(\frac{1}{|\Omega|} \int_{\Omega } \exp\left({\gamma^{-1}{(-1)^k}  \partial_i v_j(x) }\right)\mathrm{d} x\Big)\Bigg).
    \end{multlined}
    \label{psigamma}
\end{equation}
Due to \cref{estimates::lem}, $\lim_{\gamma \to 0} \Psi_{\gamma}(v)$ is equal to $\| v \|_{W^{1,\infty}_0(\Omega)^d}$. Moreover, the following property follows directly from \cref{lem::gamma}.

\begin{corollary}
    Let $v \in W_0^{1, \exp}(\Omega)^d$ defined by
    \begin{equation*}
        W_0^{1, {\exp}}(\Omega)^d \coloneqq \left\lbrace v \in W^{1,\exp}(\Omega)^d ~:~ \mathrm{tr}(v) = 0 \right\rbrace,
    \end{equation*}
    where $W^{1,\exp}(\Omega)^d = W^1 L_{\exp}(\Omega)^d$
    denotes the Orlicz-Sobolev space which contains all function in $L_{\exp}(\Omega)$ whose distributional derivatives are in $L_{\exp}(\Omega)$ (see \cite{donaldsontrudinger}),
    $\gamma_0 \geq \gamma > 0$ and $\Psi_\gamma$ be defined via \eqref{psigamma}. Then we obtain
    \begin{equation*}
        \Psi_{\gamma} (v) \geq \Psi_{\gamma_0}(v).
    \end{equation*}
    \label{corollary::estimpsi}
\end{corollary}

\section{Results for the transport equation}
\label{sec::transporteq}

By providing existence (\cref{subsec::existence}) and uniqueness of solutions (\cref{subsec::uniqueness}), this section proves well-definedness of the reduced objective function $$j(v)\coloneqq J(\phi(\cdot, T), \phi_{\mathrm{tar}}),$$ where $\phi$ solves the linear hyperbolic transport equation, cf.~\eqref{opt::prob}. Moreover, \cref{subsec::stability} establishes a stability result (cf. \ref{assumption:4}).

\subsection{Existence of solutions}
\label{subsec::existence}

Let $\Omega \subset \mathbb{R}^d$ with $d \geq 1$ be a bounded domain with Lipschitz boundary, and let $T> 0$.
We consider the transport equation
\begin{equation}
    \begin{aligned}
        \partial_t \phi + \veli \cdot \nabla \phi &= 0 &&\text{in } (0,T) \times \Omega, \\
        \phi(0,\cdot) &= \phi_0 && \text{on } \Omega,
    \end{aligned}
    \label{eq::transport}
\end{equation}
where $\veli$ denotes the velocity field and $\phi_0$ is the initial condition.

\begin{definition}[{cf. \cite[Def.~3.1.1]{jarde2018analysis}}]
    Let $\veli \in L^1((0,T) \times \Omega)^d$ with distributional divergence $\mathrm{div}(\veli) \in L^1((0,T)\times \Omega)$, and let $\phi_0 \in L^\infty(\Omega)$. Then we call a function
    \begin{equation*}
        \phi \in C([0,T], L^\infty(\Omega)\mathrm{-w}^\ast)
    \end{equation*}
    a \emph{weak solution} of \eqref{eq::transport} if
    \begin{equation}
        \int_0^T \int_\Omega \phi(t,x) (\varphi_t(t,x) + \veli(t,x) \cdot \nabla \varphi(t,x) + \varphi(t,x) \mathrm{div}(\veli(t,x))) \mathrm{d} x \, \mathrm \mathrm{d}t= - \int_\Omega \phi_0 (x) \varphi(0,x) \mathrm d x
        \label{eq::weak_form}
    \end{equation}
    for all $\varphi \in C_c^\infty ([0,T)\times\Omega)$.
    \label{definition}
\end{definition}

\begin{remark}[cf. {\cite[Sec.~2.1.3]{jarde2018analysis}}]
    For a separable Banach space $X$, $L^p((0,T), X)$ denotes the space of Bochner integrable functions with values in $X$.
    For $X$ being the non-separable dual space of a separable Banach space $P$, we denote by $L^p((0,T), X)$ the space of Gelfand integrable functions with values in $X$, i.e., the function that are weak$^\ast$ measurable ($f: (0,T) \to \langle f(t), p \rangle_{X, P}$ is Lebesgue measurable for any $p \in P$, which also implies that $t \to \| f(t)\|_X$ is Lebesgue measurable) and fulfill
    $\int_0^T \| f(t)\|_X^p\, \mathrm d t < \infty$.
    Moreover, $L^\infty((0,T), X)$ denotes the space of weak$^\ast$ measurable functions $f: (0,T) \to X$ such that $t \mapsto \| f(t)\|_X$ is essentially bounded in $(0,T)$. A reason why one works with weak$^\ast$ measurability is due to the fact that functions that describe moving fronts, like, e.g., $u: [0,1] \to L^\infty((0,1))$, $u(t)(x) = 1_{\lbrace x \geq t\rbrace }(t,x)$, where $1_{\lbrace x \geq t \rbrace}(t,x)$ denotes the indicator function of the set $\lbrace (t,x) \in [0,1]\times [0,1]\,:\, x \geq t \rbrace$, are not Bochner measurable.
\end{remark}
\begin{remark}[{cf. \cite{crippa2008flow}}]
    Let $u:~(0,T) \times \Omega \to \mathbb R$ such that $u(t, \cdot) \in L^\infty(\Omega)$ for all $t \in [0,T]$ and $\varphi \in L^1(\Omega)$. Define $\Phi_{\varphi}(u): [0,T] \to \mathbb R$ via $$\Phi_{\varphi}(u)(t):= \int_\Omega u(t, x) \varphi(x) \mathrm{d}x.$$
    By definition,
    \begin{equation*}
        C([0,T], L^\infty(\Omega)\mathrm{-w}^\ast) = \lbrace u \in L^\infty((0,T), L^\infty(\Omega))\,:\, \Phi_{\varphi}(u) \in C([0,T]) \text{ for all } \varphi \in L^1(\Omega) \rbrace.
    \end{equation*}
    \label{remark::linftyweak}
\end{remark}

\begin{proposition}
    $(C([0,T], L^\infty(\Omega)\mathrm{-w}^\ast), \| \cdot \|_{L^\infty((0,T), L^\infty(\Omega))})$ is a Banach space, where $$ \| \cdot \|_{L^\infty((0,T), L^\infty(\Omega))} := \mathrm{ess\,sup}_{t \in (0,T)} \| \cdot(t) \|_{L^\infty(\Omega)}.$$
    \label{proposition::Bspace}
\end{proposition}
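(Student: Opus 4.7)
The plan is to realize $C([0,T], L^\infty(\Omega)\text{-w}^\ast)$ as a closed linear subspace of the Gelfand-integrable Banach space $L^\infty((0,T), L^\infty(\Omega))$; then completeness is automatic. Linearity is immediate because $u \mapsto \Phi_\varphi(u)$ is linear, so linear combinations preserve the defining continuity condition. The substantive work is to prove closedness.

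Let $(u_n) \subset C([0,T], L^\infty(\Omega)\text{-w}^\ast)$ be Cauchy in the $L^\infty((0,T), L^\infty(\Omega))$-norm with limit $u$. For fixed $\varphi \in L^1(\Omega)$ the Hölder-type estimate
\begin{equation*}
|\Phi_\varphi(u_n)(t) - \Phi_\varphi(u_m)(t)| \leq \|u_n(t) - u_m(t)\|_{L^\infty(\Omega)} \|\varphi\|_{L^1(\Omega)}
\end{equation*}
combined with the observation that for continuous functions on the compact interval $[0,T]$ the essential and ordinary suprema coincide shows that $(\Phi_\varphi(u_n))$ is Cauchy in $C([0,T])$, hence converges uniformly to some $g_\varphi \in C([0,T])$. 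The same estimate, applied between $u_n$ and $u$, yields $\Phi_\varphi(u_n) \to \Phi_\varphi(u)$ in $L^\infty(0,T)$, so $g_\varphi = \Phi_\varphi(u)$ almost everywhere in $t$.

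The remaining task is to produce a pointwise-defined representative $\tilde u$ of the equivalence class $u$ for which $\Phi_\varphi(\tilde u) = g_\varphi$ holds \emph{everywhere} on $[0,T]$, for every $\varphi \in L^1(\Omega)$. A uniform bound $M$ on $\|u_n\|_{L^\infty((0,T), L^\infty(\Omega))}$ exists since a Cauchy sequence is bounded; the weak-$\ast$ continuity of $u_n$ together with weak-$\ast$ lower semicontinuity of the $L^\infty(\Omega)$-norm upgrades the essential bound to a pointwise bound $\|u_n(t)\|_{L^\infty(\Omega)} \leq M$ for all $t$ (by approaching any $t \in [0,T]$ by almost-everywhere points). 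Passing to the limit in $|\Phi_\varphi(u_n)(t)| \leq M\|\varphi\|_{L^1(\Omega)}$ gives $|g_\varphi(t)| \leq M\|\varphi\|_{L^1(\Omega)}$, so for each $t$ the assignment $\varphi \mapsto g_\varphi(t)$ is a bounded linear functional on $L^1(\Omega)$. By the duality $L^\infty(\Omega) = L^1(\Omega)^\ast$ there is a unique $\tilde u(t) \in L^\infty(\Omega)$ with $g_\varphi(t) = \int_\Omega \tilde u(t,x) \varphi(x) \mathrm{d}x$ for all $\varphi$. Comparing with $g_\varphi = \Phi_\varphi(u)$ a.e.\ in $t$ shows $\tilde u(t,\cdot) = u(t,\cdot)$ in $L^\infty(\Omega)$ for a.e.\ $t$, so $\tilde u$ represents the equivalence class of $u$; by construction $\Phi_\varphi(\tilde u) = g_\varphi \in C([0,T])$ for every $\varphi \in L^1(\Omega)$. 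Hence $u \in C([0,T], L^\infty(\Omega)\text{-w}^\ast)$, completing the proof.

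The main obstacle is bookkeeping between equivalence classes in $L^\infty((0,T), L^\infty(\Omega))$ and the genuinely pointwise representative demanded by the definition in \cref{remark::linftyweak}; the crucial technical ingredient is the upgrade from an essential to a pointwise bound on $\|u_n(t)\|_{L^\infty(\Omega)}$ via weak-$\ast$ lower semicontinuity, which then enables the duality argument constructing $\tilde u$.
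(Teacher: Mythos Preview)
Your proof is correct and takes a genuinely different route from the paper. The paper constructs the continuous representative by first taking the $L^\infty((0,T),L^\infty(\Omega))$-limit $\tilde u$, keeping it on the full-measure set $I_T$ where pointwise convergence holds, and then patching the null set $[0,T]\setminus I_T$ by extracting weak-$\ast$ subsequential limits along sequences from $I_T$; continuity of $\Phi_\varphi$ of the resulting function is then verified a posteriori through a direct three-term $\varepsilon$-argument. Your approach instead works at the scalar level: you show each $(\Phi_\varphi(u_n))$ is Cauchy in $C([0,T])$ with limit $g_\varphi$, and then \emph{reconstruct} the representative $\tilde u(t)$ by invoking the duality $L^\infty(\Omega)=L^1(\Omega)^\ast$ applied to the bounded linear functional $\varphi\mapsto g_\varphi(t)$. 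This is more functional-analytic and arguably cleaner, since continuity of $\Phi_\varphi(\tilde u)=g_\varphi$ is built in rather than checked afterwards; the paper's construction is more hands-on and avoids any explicit appeal to duality. One small point worth making explicit in your identification of $\tilde u$ with $u$ almost everywhere: the null set on which $g_\varphi(t)\neq\Phi_\varphi(u)(t)$ depends on $\varphi$, so to conclude $\tilde u(t)=u(t)$ in $L^\infty(\Omega)$ for a.e.\ $t$ you should pass through a countable dense subset of $L^1(\Omega)$ to obtain a single exceptional null set that works for all $\varphi$ simultaneously.
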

\begin{proof}
    In order to prove this statement, we show completeness.
    Let $$(u_n)_{n \in \mathbb N} \subset C([0,T], L^\infty(\Omega)\mathrm{-w}^\ast)$$ be a Cauchy sequence. Since $L^\infty((0,T), L^\infty(\Omega))$ is a Banach space, there exists $\tilde u \in L^\infty((0,T), L^\infty(\Omega))$ such that $\lim_{n \to \infty} \| u_n - \tilde u \|_{L^\infty((0,T), L^\infty(\Omega))} = 0$. Let $\tilde u$ be a representative of the equivalence class of functions in $L^\infty((0,T), L^\infty(\Omega))$. Then we know that
    \begin{equation}
        \| \tilde u(t, \cdot) - u_n(t, \cdot) \|_{L^\infty(\Omega)} \to 0 \quad \text{for } t \in I_T,
        \label{completeness1}
    \end{equation}
    where $I_T \subset [0,T]$ and $[0,T]\setminus I_T$ is a set with Lebesgue measure $0$. Let $u: [0,T] \times \Omega \to \mathbb R$ be such that $u(t, \cdot) = \tilde u(t, \cdot)$ for all $t \in I_T$. Moreover, for $s \in [0,T]\setminus I_T$, there exists $(s_n)_{n \in \mathbb N} \subset I_T$ such that $\lim_{n \to \infty} s_n = s$. Since $(\|u(s_n, \cdot)\|_{L^\infty(\Omega)})_{n \in \mathbb N}$ is bounded, there exists a weakly$^\ast$ convergent subsequence with limit $u_s \in L^\infty(\Omega)$. We set $u(s, \cdot):= u_s(\cdot)$ for all $s \in [0,T]\setminus I_T$. Since we only changed $\tilde u$ on a set of measure zero, $\| u_n - u\|_{L^\infty((0,T), L^\infty(\Omega))} = 0$ and $u(t, \cdot) \in L^\infty(\Omega)$ for all $t\in [0,T]$. What remains to show for proving $u \in C([0,T], L^\infty(\Omega)\mathrm{-w}^\ast)$ is that $\Phi_\varphi(u) \in C([0,T])$ for all $\varphi \in L^1(\Omega)$. Let $\varphi \in L^1(\Omega)$ be arbitrary but fixed, and $\epsilon > 0$. By definition of $u$, for every $t, s \in [0,T]$ and $\delta > 0$, there exist $t_{\delta,\epsilon}, s_{\delta,\epsilon} \in I_T$ such that $|t_{\delta, \epsilon} - t| \leq \frac\delta4$, $|s_{\delta, \epsilon} - s| \leq \frac\delta4$, and
    \begin{equation}
        |\Phi_\varphi(u)(t) - \Phi_\varphi(u)(s)| \leq |\Phi_\varphi(u)(t_{\delta, \epsilon}) - \Phi_\varphi(u)(s_{\delta, \epsilon})| + \frac{\epsilon}4
        \label{completeness2}
    \end{equation}
    Due to \eqref{completeness1} and the fact that $u(t, \cdot) = \tilde u(t,\cdot)$ for all $t \in I_T$, there exists $N > 0$ such that
    \begin{equation}
        |\Phi_\varphi(u)(\tau) - \Phi_\varphi(u_n)(\tau)| \leq \frac{\epsilon}4
        \label{completeness3}
    \end{equation}
    for all $\tau \in I_T$ and $n \geq N$.
    Now, let $t \in [0,T]$ be arbitrary but fixed. Due to continuity of $\Phi_\varphi(u_N)$, there exists $\delta > 0$ such that
    \begin{equation}
        |\Phi_\varphi(u_N)(t) - \Phi_\varphi(u_N)(s)| \leq \frac{\epsilon}4
        \label{completeness4}
    \end{equation}
    for all $s \in B_\delta(t)$. Combination of \eqref{completeness2}, \eqref{completeness3}, \eqref{completeness4} yields that for every $\epsilon > 0$ there exists $\delta > 0$ such that
    \begin{equation*}
        |\Phi_\varphi(u)(t) - \Phi_\varphi(u)(s)| \leq \epsilon
    \end{equation*}
    for all $s \in B_{\delta/2}(t)$, which ensures that $|s_{\delta,\epsilon}- t_{\delta,\epsilon}|\leq \delta$. This implies $\Phi_\varphi(u)\in C([0,T])$.
\end{proof}

Due to results from \cite{jarde2019existence, crippa2008flow, crippa2014initial}, the following existence result holds (see \cref{sec::proofexistence}).

\begin{proposition}[existence, cf. {\cite[Thm. 3.1.4, Rem. 3.1.6]{jarde2018analysis}}]
    Let $\Omega \subset \mathbb R^d$ be a bounded domain with Lipschitz boundary,
    $\phi_0 \in L^\infty(\Omega)$, and $\veli \in L^1((0,T), \mathrm{BV}_0(\Omega))^d$ satisfy $\mathrm{div}(\veli) \in L^1((0,T) \times \Omega)$. Then there exists a weak solution $\phi \in C([0,T], L^\infty(\Omega) \mathrm{-w}^\ast)$ of \eqref{eq::transport}, which fulfills
    \begin{equation*}
        \|\phi (t, \cdot)\|_{L^\infty(\Omega)} \leq \| \phi_0 \|_{L^\infty(\Omega)}
        \quad \text{for all} \quad t \in [0,T].
    \end{equation*}
    \label{lem::existence}
\end{proposition}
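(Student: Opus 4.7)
The plan is to construct a weak solution via a regularization argument, starting from classical solutions for smooth vector fields and passing to the limit using the $L^\infty$ bound as the key compactness ingredient. Since $\veli \in L^1((0,T), \mathrm{BV}_0(\Omega))^d$, the natural approximation is to extend $\veli$ by zero outside $\Omega$ (using the $\mathrm{BV}_0$ trace property) and then mollify in space and time to obtain $\veli_\epsilon \in C_c^\infty((0,T) \times \Omega)^d$ with $\veli_\epsilon \to \veli$ in $L^1((0,T) \times \Omega)^d$ and $\mathrm{div}(\veli_\epsilon) \to \mathrm{div}(\veli)$ in $L^1((0,T) \times \Omega)$. Similarly, approximate $\phi_0$ by $\phi_0^\epsilon \in C_c^\infty(\Omega)$ with $\|\phi_0^\epsilon\|_{L^\infty(\Omega)} \leq \|\phi_0\|_{L^\infty(\Omega)}$ and $\phi_0^\epsilon \to \phi_0$ in $L^1(\Omega)$.

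For each $\epsilon > 0$, the smooth transport equation can be solved by the method of characteristics, yielding a classical solution $\phi_\epsilon \in C^1([0,T] \times \overline\Omega)$. The maximum principle for characteristic solutions immediately gives $\|\phi_\epsilon(t,\cdot)\|_{L^\infty(\Omega)} \leq \|\phi_0^\epsilon\|_{L^\infty(\Omega)} \leq \|\phi_0\|_{L^\infty(\Omega)}$ for all $t \in [0,T]$. By the Banach--Alaoglu theorem in $L^\infty((0,T) \times \Omega)$ (using separability of $L^1((0,T) \times \Omega)$), a subsequence of $(\phi_\epsilon)_\epsilon$ converges weakly$^\ast$ to some $\phi \in L^\infty((0,T) \times \Omega)$ satisfying the same $L^\infty$ bound. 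I would then pass to the limit in the weak formulation \eqref{eq::weak_form} written for $\phi_\epsilon$: the term $\int_0^T\int_\Omega \phi_\epsilon \varphi_t \, \mathrm{d}x\,\mathrm{d}t$ converges by weak$^\ast$ convergence; the term $\int_0^T\int_\Omega \phi_\epsilon \veli_\epsilon \cdot \nabla\varphi\,\mathrm{d}x\,\mathrm{d}t$ converges by combining the uniform $L^\infty$ bound on $\phi_\epsilon$ with strong $L^1$ convergence of $\veli_\epsilon$; and the term $\int_0^T\int_\Omega \phi_\epsilon \varphi \,\mathrm{div}(\veli_\epsilon)\,\mathrm{d}x\,\mathrm{d}t$ converges analogously using strong $L^1$ convergence of $\mathrm{div}(\veli_\epsilon)$.

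To upgrade the limit $\phi \in L^\infty((0,T)\times\Omega)$ to a representative in $C([0,T], L^\infty(\Omega)\text{-w}^\ast)$, I would, for arbitrary $\psi \in C_c^\infty(\Omega)$, consider the map $t \mapsto \Phi_\psi(\phi)(t) \coloneqq \int_\Omega \phi(t,x) \psi(x) \,\mathrm{d}x$. Testing \eqref{eq::weak_form} with separated functions $\chi(t)\psi(x)$ shows that this map has distributional derivative
\begin{equation*}
    \frac{\mathrm d}{\mathrm d t}\Phi_\psi(\phi)(t) = \int_\Omega \phi(t,x)\bigl(\veli(t,x)\cdot\nabla\psi(x) + \psi(x)\,\mathrm{div}(\veli(t,x))\bigr)\,\mathrm{d}x,
\end{equation*}
which, by the uniform $L^\infty$ bound on $\phi$ together with $\veli \in L^1((0,T)\times\Omega)^d$ and $\mathrm{div}(\veli) \in L^1((0,T)\times\Omega)$, lies in $L^1((0,T))$. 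Hence $\Phi_\psi(\phi)$ admits an absolutely continuous representative, and by density of $C_c^\infty(\Omega)$ in $L^1(\Omega)$ together with the $L^\infty$ bound, this extends to all $\psi \in L^1(\Omega)$, yielding $\phi \in C([0,T], L^\infty(\Omega)\text{-w}^\ast)$ as required by \cref{proposition::Bspace}.

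The main obstacle will be the approximation step preserving the $\mathrm{BV}_0$ structure: the standard mollification convolves against a kernel supported near the origin, but to obtain convergence $\veli_\epsilon \to \veli$ in $L^1$ together with $\mathrm{div}(\veli_\epsilon)\to \mathrm{div}(\veli)$ in $L^1$ one must exploit that the zero-extension of a $\mathrm{BV}_0$ function remains $\mathrm{BV}$ without producing singular boundary measures in the divergence, so that the divergence of the mollification is simply the mollification of the $L^1$ divergence. Once this approximation is in place, the compactness and passage to the limit are routine, and the time-continuity is essentially a consequence of the equation itself.
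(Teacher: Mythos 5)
The paper does not prove this proposition itself but imports it from the cited reference, and your regularization argument (mollify the zero-extension of the $\mathrm{BV}_0$ field, solve by characteristics, use the uniform $L^\infty$ bound and weak$^\ast$ compactness to pass to the limit in the weak formulation, then recover weak$^\ast$ time-continuity from the equation) is precisely the standard proof underlying that result. The one point you flag as the main obstacle — that the zero trace of $\veli$ prevents a singular boundary contribution so that mollification commutes with the divergence — is indeed the only delicate step, and your resolution of it is correct.
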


\subsection{Uniqueness of solutions}
\label{subsec::uniqueness}

In order to prove uniqueness of solutions in \cref{theorem::uniqueness}, we use the so-called renormalization property.
\begin{definition}[{cf. \cite[Def.~3.1.7 and Def.~3.1.8]{jarde2018analysis}}]
    Let $\veli$ be a vector field in $L^1((0,T) \times \Omega)^d$ with $\mathrm{div}(\veli) \in L^1((0,T) \times \Omega)$. Then $\veli$ has the \emph{renormalization property} if for every $\phi_0 \in L^\infty(\Omega)$, every solution $\phi \in L^\infty((0,T) \times \Omega)$ of the transport equation \eqref{eq::transport}, with vector field $\veli$ and initial condition $\phi_0$, is a renormalized solution, i.e., for each $C^1$-function $\beta: \mathbb R \to \mathbb R$, the function $\beta(\phi)$ is a weak solution of
    \begin{equation}
        \begin{aligned}
            \partial_t \beta(\phi) + \veli \cdot \nabla \beta (\phi) &= 0 && \text{in } (0,T) \times \Omega, \\
            \beta(\phi(0,\cdot)) &= \beta (\phi_0) && \text{in } \Omega.
        \end{aligned}
        \label{eq::transport_renormalized}
    \end{equation}
\end{definition}

For smooth $\phi$ and $\veli$ this property follows directly from \cite[Thm.~4]{chen2011image}, see \cite[Sec.~3.1.2]{jarde2018analysis}. From \cite[Rem. 2.2.2]{crippa2008flow}, we obtain $\beta(\phi) \in C([0,T], L^\infty(\Omega) \mathrm{-w}^\ast)$.

\begin{proposition}[{cf. \cite[Thm.~3.1.26]{jarde2018analysis}}]
    Let $\Omega \subset \mathbb R^d$ be a bounded domain with Lipschitz boundary, and
    $\veli$ be a vector field in $L^1((0,T), \mathrm{BV}_0(\Omega))^d \cap L^\infty((0,T)\times \Omega)^d$  such that $\mathrm{div}(\veli) \in L^1((0,T) \times \Omega)$. Then $\veli$ has the renormalization property.
    \label{lem::renormalization}
\end{proposition}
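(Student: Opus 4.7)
The plan is to follow the DiPerna--Lions/Ambrosio renormalization strategy via convolution and commutator estimates. This proposition is essentially Ambrosio's renormalization theorem for BV vector fields adapted to a bounded Lipschitz domain via the $\mathrm{BV}_0$ assumption, so I would not try to reprove it from scratch but rather reduce it to that theorem.

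\textbf{Step 1 (Reduction to the whole space via $\mathrm{BV}_0$).} Using the zero trace encoded in $\mathrm{BV}_0(\Omega)^d$, extend $\veli$ by zero to obtain $\tilde \veli \in L^1((0,T), \mathrm{BV}(\mathbb R^d))^d \cap L^\infty((0,T)\times \mathbb R^d)^d$ without introducing jumps on $\partial\Omega$; correspondingly extend $\phi$ by zero to $\tilde\phi$. The weak formulation \eqref{eq::weak_form} remains valid for $\tilde\phi$ and $\tilde\veli$ on $\mathbb R^d$ against test functions $\varphi \in C_c^\infty([0,T) \times \mathbb R^d)$, and one still has $\mathrm{div}(\tilde\veli) \in L^1((0,T)\times \mathbb R^d)$.

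\textbf{Step 2 (Regularization).} Take a standard spatial mollifier $\rho_\epsilon$ and set $\phi_\epsilon(t,\cdot) \coloneqq \tilde\phi(t,\cdot) \ast_x \rho_\epsilon$. Then $\phi_\epsilon$ is smooth in $x$, uniformly bounded in $L^\infty$, and satisfies, in the sense of distributions on $(0,T)\times \mathbb R^d$,
\begin{equation*}
    \partial_t \phi_\epsilon + \tilde\veli \cdot \nabla \phi_\epsilon = r_\epsilon,
    \qquad r_\epsilon \coloneqq \tilde\veli \cdot \nabla \phi_\epsilon - \bigl((\tilde\veli \cdot \nabla \tilde\phi) \ast_x \rho_\epsilon\bigr),
\end{equation*}
where the right-hand side is rewritten using $\mathrm{div}(\tilde\veli \tilde\phi) = \tilde\veli \cdot \nabla\tilde\phi + \tilde\phi\, \mathrm{div}(\tilde\veli)$ so that the bracket is well-defined as a distribution.

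\textbf{Step 3 (Commutator estimate).} The core of the proof is to show that $r_\epsilon \to 0$ in $L^1_{\mathrm{loc}}((0,T)\times\mathbb R^d)$ as $\epsilon \to 0^+$. This is Ambrosio's BV commutator lemma, which generalizes the DiPerna--Lions estimate from $W^{1,1}$ to BV. The absolutely continuous part of $D\tilde\veli$ is handled as in DiPerna--Lions, while the singular part is controlled by invoking Alberti's rank-one theorem: the singular measure has the form $\xi(x) \otimes \eta(x)\,|D^s \tilde\veli|$ with $\xi,\eta$ aligned, and one exploits the symmetry of $\rho_\epsilon$ to cancel the singular contribution in the limit. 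The $L^\infty$ bound on $\tilde\veli$ and the $L^1$ bound on $\mathrm{div}(\tilde\veli)$ provide the integrability needed to legitimize these computations.

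\textbf{Step 4 (Chain rule and passage to the limit).} Since $\phi_\epsilon$ is smooth in $x$ and lies in $L^\infty$, the classical chain rule yields
\begin{equation*}
    \partial_t \beta(\phi_\epsilon) + \tilde\veli \cdot \nabla \beta(\phi_\epsilon) = \beta'(\phi_\epsilon)\, r_\epsilon
\end{equation*}
for every $\beta \in C^1(\mathbb R)$. Because $\|\phi_\epsilon\|_{L^\infty} \leq \|\tilde\phi\|_{L^\infty}$, the factor $\beta'(\phi_\epsilon)$ is uniformly bounded, so the right-hand side converges to $0$ in $L^1_{\mathrm{loc}}$ by Step 3. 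Meanwhile $\phi_\epsilon \to \tilde\phi$ in $L^p_{\mathrm{loc}}$ for all $p < \infty$ and a.e.\ along a subsequence, whence $\beta(\phi_\epsilon) \to \beta(\tilde\phi)$ by dominated convergence. Passing to the limit in the distributional formulation on $(0,T) \times \mathbb R^d$, testing against $\varphi \in C_c^\infty([0,T)\times\Omega)$, and using that supports of the test functions stay away from $\partial\Omega$ so the extension-by-zero does not create spurious boundary terms, yields \eqref{eq::transport_renormalized} in the weak sense, with initial datum $\beta(\phi_0)$ identified from $\phi(0,\cdot) = \phi_0$.

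The main obstacle is clearly Step 3: for $W^{1,1}$ vector fields it is the elementary DiPerna--Lions commutator lemma, but the BV case genuinely needs Alberti's rank-one theorem and the careful exploitation of mollifier symmetry to cancel the singular part of $D\tilde\veli$. The $\mathrm{BV}_0$ hypothesis on a Lipschitz domain is precisely what lets us perform this argument globally on $\mathbb R^d$ after extension by zero without incurring an artificial boundary jump that would obstruct the commutator cancellation.
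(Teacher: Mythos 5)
The paper does not prove this proposition at all: it is imported verbatim as a citation of \cite[Thm.~3.1.26]{jarde2018analysis}, which in turn rests on Ambrosio's renormalization theorem for BV vector fields. Your sketch correctly reconstructs the architecture of that underlying proof, and in particular you correctly identify the one piece of content that is specific to this bounded-domain formulation, namely that the zero trace encoded in $\mathrm{BV}_0(\Omega)^d$ lets you extend $\veli$ (and $\phi$) by zero to $\mathbb R^d$ without creating a jump part of $D\tilde\veli$ or a surface contribution to $\mathrm{div}(\tilde\veli)$ on $\partial\Omega$, so that the whole-space theorem applies and test functions may be taken in $C_c^\infty([0,T)\times\mathbb R^d)$; this is exactly the reduction used in the cited source and implicitly relied upon later in the paper's uniqueness proof.

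One technical caveat on Step 3: as stated, the claim that $r_\epsilon \to 0$ in $L^1_{\mathrm{loc}}$ is stronger than what Ambrosio's argument actually delivers for genuinely BV (as opposed to $W^{1,1}$) fields, and strong $L^1_{\mathrm{loc}}$ convergence of the commutator with a fixed isotropic mollifier is not known in that generality. What the BV proof shows is that the commutators are bounded in $L^1_{\mathrm{loc}}$ and that any weak$^\ast$ limit of $|\beta'(\phi_\epsilon) r_\epsilon|$ is a measure singular with respect to Lebesgue measure and absolutely continuous with respect to $|D^s\tilde\veli|$, whose density can be made arbitrarily small by optimizing over a family of \emph{anisotropic} convolution kernels adapted, via Alberti's rank-one theorem, to the polar decomposition $\xi\otimes\eta$ of $D^s\tilde\veli$; it is this anisotropy, rather than the symmetry of $\rho_\epsilon$, that kills the singular contribution. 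This does not change the conclusion of Step 4, since the vanishing of the defect measure is all that is needed to pass to the limit, but the mechanism should be stated as a vanishing-defect argument rather than as strong convergence of $r_\epsilon$.
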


An essential tool for proving uniqueness is the following version of Grönwall's inequality.
\begin{proposition}[Grönwall type inequality, cf. {\cite[p. 356, Thm.~4]{mitrinovic1991inequalities}}]
    Let $w: [0,T] \to [0, \infty)$ be continuous, non-negative and satisfy
    \begin{equation*}
        w(t) \leq c_0 + \int_0^t c_1(s) w(s) \mathrm{d}s,
    \end{equation*}
    where $c_1: [0,T] \to [0, \infty)$, $c_1 \in L^1((0,T))$, is a non-negative integrable function and $c_0 \geq 0$.
    Then
    \begin{equation*}
        w(t) \leq c_0 \mathrm{exp}\left(\int_0^t c_1(s) \mathrm d s\right)
    \end{equation*}
    for all $t \in [0,T]$.
    \label{lem::gronnwall}
\end{proposition}

\begin{theorem}[uniqueness]
    Let $\Omega \subset \mathbb R^d$ be bounded domain with Lipschitz boundary, let $\phi_0 \in L^\infty(\Omega)$, and let $\veli$ be a vector field with $$\veli \in L^1((0,T), \mathrm{BV}_0(\Omega))^d \cap L^\infty((0,T)\times \Omega)^d, \quad \text{and} \quad  \mathrm{div}(\veli) \in L^p((0,T), L_{\exp}(\Omega)),$$ $p\in(1,\infty]$. Then
    there exists a unique weak solution $\phi \in C([0,T], L^\infty(\Omega) \mathrm{-w}^\ast)$ of \eqref{eq::transport}.
    \label{theorem::uniqueness}
\end{theorem}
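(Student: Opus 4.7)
By linearity it suffices to show that any weak solution $\psi \in C([0,T], L^\infty(\Omega)\text{-}w^\ast)$ of \eqref{eq::transport} with $\phi_0 \equiv 0$ vanishes identically; if $\phi_1, \phi_2$ share an initial datum, I set $\psi := \phi_1 - \phi_2$ and note $M := \|\psi\|_{L^\infty((0,T)\times\Omega)} \leq 2\|\phi_0\|_{L^\infty}$. Since $\veli$ has the renormalization property by \cref{lem::renormalization}, applying it to a smooth approximation $\beta_n$ of $\beta(s) = s^2$ with $\beta_n(0)=0$ and rewriting the renormalized equation in divergence form yields $\partial_t(\psi^2) + \mathrm{div}(\veli\,\psi^2) = \psi^2\,\mathrm{div}(\veli)$ in $\mathcal D'((0,T)\times\Omega)$. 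Testing against the product of a temporal mollifier of $\mathbf 1_{(0,t)}$ with a spatial cutoff $\chi_R \in C_c^\infty(\Omega)$ rising to $\mathbf 1_\Omega$, and using the zero normal trace of $\veli$ coming from $\mathrm{BV}_0$ to discard the boundary term as $R \to \infty$, I obtain the energy identity
\begin{equation*}
    w(t) := \int_\Omega \psi(t,x)^2 \, \mathrm d x = \int_0^t \int_\Omega \psi(s,x)^2 \, \mathrm{div}(\veli(s,x)) \, \mathrm d x \, \mathrm d s.
\end{equation*}

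The main step is to estimate the right-hand side under the weakened hypothesis $\mathrm{div}(\veli) \in L^p((0,T),L_{\exp}(\Omega))$. The generalized Hölder inequality (\cref{lem::hoelder_generalization}) with $\mathrm d \mu = |\Omega|^{-1}\mathrm d x$ gives
\begin{equation*}
    \left|\int_\Omega \psi^2 \, \mathrm{div}(\veli) \, \mathrm d x\right| \leq 2|\Omega| \, \|\psi^2\|_{L{\log}L} \, \|\mathrm{div}(\veli)\|_{L_{\exp}},
\end{equation*}
and the decisive interpolation estimate is, for nonnegative $f \in L^\infty(\Omega)$,
\begin{equation*}
    \|f\|_{L{\log}L} \leq 2\|f\|_{L^1(\mathrm d \mu)}\bigl(1 + \ln^+(\|f\|_\infty/\|f\|_{L^1(\mathrm d \mu)})\bigr),
\end{equation*}
obtained by substituting $\lambda = 2\|f\|_{L^1(\mathrm d\mu)}(1+\ln^+(\|f\|_\infty/\|f\|_{L^1(\mathrm d\mu)}))$ into the Luxemburg-norm definition and bounding $\int \Phi_{\log}(f/\lambda)\,\mathrm d\mu$ on the super-level set $\{f > \lambda\}$ by $\ln^+(\|f\|_\infty/\lambda)$. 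Plugging in $f = \psi^2$ (so $\|f\|_\infty \leq M^2$ and $\|f\|_{L^1(\mathrm d\mu)} = w(t)/|\Omega|$) yields the log-Grönwall inequality
\begin{equation*}
    w'(t) \leq c(t) \, w(t) \bigl(1 + \ln^+(|\Omega|M^2/w(t))\bigr), \quad c(t) := C\,\|\mathrm{div}(\veli(t,\cdot))\|_{L_{\exp}},
\end{equation*}
for a.e.\ $t \in (0,T)$, with $c \in L^p(0,T) \subset L^1(0,T)$ by assumption.

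To deduce $w \equiv 0$ from this Osgood-type inequality via the classical Grönwall lemma (\cref{lem::gronnwall}), I introduce $V(t) := 1 + \ln^+(|\Omega|M^2/w(t))$, extended to $+\infty$ where $w = 0$. Whenever $w(t) > 0$, $V'(t) = -w'(t)/w(t) \geq -c(t)V(t)$, so applying \cref{lem::gronnwall} to the reversed inequality gives $V(t) \geq V(t_0)\exp(-\int_{t_0}^t c)$ whenever $t_0 < t$ and $w(t_0) > 0$. If $w(t^\ast) > 0$ for some $t^\ast$, taking $t_0 \downarrow \inf\{s \in [0,t^\ast] : w(s) > 0\}$ (along which, by the absolute continuity of $w$, $w(t_0) \to 0^+$ and hence $V(t_0) \to +\infty$) forces $V(t^\ast) = +\infty$, which contradicts $w(t^\ast) > 0$. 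Thus $w \equiv 0$ and $\phi_1 = \phi_2$.

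\textbf{Main obstacle.} The loss of the classical hypothesis $\mathrm{div}(\veli) \in L^\infty$ costs a logarithmic factor in the Orlicz Hölder estimate, so that only an Osgood-type log-Grönwall inequality is available rather than a linear one; recasting it via the substitution $w \mapsto 1+\ln^+(|\Omega|M^2/w)$ so that the classical Grönwall lemma of \cref{lem::gronnwall} applies is the key new ingredient enabling uniqueness under the weaker assumption $\mathrm{div}(\veli) \in L^p((0,T), L_{\exp}(\Omega))$. A secondary technical issue is the rigorous justification of the energy identity, since $\psi$ is only $L^\infty$-weak$^\ast$-continuous in time and $\veli$ has only $\mathrm{BV}$-regularity in space; the zero-trace property inherited from $\mathrm{BV}_0$ is crucial for the boundary term to vanish in the limit $R \to \infty$.
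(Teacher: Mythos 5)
Your proof is correct in substance, but it takes a genuinely different route from the paper's. The paper also reduces to the difference $\phi=\phi_1-\phi_2$, invokes the renormalization property of \cref{lem::renormalization}, and estimates the resulting divergence term by the Orlicz--H\"older inequality of \cref{lem::hoelder_generalization}; but there the renormalization is applied to $\beta_\gamma(s)=\eu^{\gamma^{-1}s^2+1}$ rather than to $s^2$. The paper bounds $\|\beta_\gamma(\phi)\|_{\LlogL}$ by $(1+\gamma^{-1}\hat C^2)\,|\Omega|^{-1}\int_\Omega\beta_\gamma(\phi)$ using the a priori $L^\infty$ bound $\hat C$, runs the linear Gr\"onwall lemma on $t\mapsto\int_\Omega\beta_\gamma(\phi(t,\cdot))$, and then sends $\gamma\to0^+$ (via \cref{estimates::lem}) to convert the exponential-moment bound into $\|\phi(t,\cdot)^2\|_{L^\infty(\Omega)}\le\int_r^t 2\hat C^2\|\mathrm{div}(b(\tau,\cdot))\|_{\Lexp}\,\mathrm d\tau$; on a time interval where this integral is at most $1/8$ the bound $\hat C$ improves to $\hat C/2$, and a double iteration (geometric contraction in $\hat C$, then continuation in time) gives $\phi\equiv0$. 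You instead work with the $L^2$ energy $w(t)=\|\psi(t,\cdot)\|_{L^2(\Omega)}^2$, pay for the weak divergence through the interpolation $\|f\|_{\LlogL}\le 2\|f\|_{L^1(\mathrm d\mu)}(1+\ln^+(\|f\|_{\infty}/\|f\|_{L^1(\mathrm d\mu)}))$ (which is correct as you derive it), and close via an Osgood-type log-Gr\"onwall inequality --- the classical Yudovich-style argument for exponentially integrable divergence. Your route is shorter and arguably more transparent about where the logarithm enters; the paper's route stays entirely within the linear Gr\"onwall lemma \cref{lem::gronnwall} and produces a pointwise-in-space ($L^\infty$) decay rather than an $L^2$ one. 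Both rest on the same two pillars: the a priori $L^\infty$ bound on weak solutions and the $\Lexp$--$\LlogL$ duality pairing.

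Two minor points to tighten. First, in the Osgood step you should take $t_0$ to approach the \emph{last} zero of $w$ before $t^\ast$, i.e.\ $\tau:=\sup\{s\in[0,t^\ast]:w(s)=0\}$, rather than $\inf\{s:w(s)>0\}$; otherwise $w$ could vanish again inside $(t_0,t^\ast)$ and the differential inequality for $V$ would not hold on all of $[t_0,t^\ast]$. With that choice $w>0$ on $(\tau,t^\ast]$, $w(\tau)=0$, and the contradiction goes through. Second, your justification of the energy identity (temporal mollifier times spatial cutoff, zero normal trace from $\mathrm{BV}_0$) is at the same level of rigor as the paper's own appeal to \cite[Sec.~3.1.1]{jarde2018analysis}, so this is not a gap relative to the paper, but the phrasing ``$\chi_R$ rising to $\mathbf 1_\Omega$ as $R\to\infty$'' should be a cutoff exhausting the bounded domain $\Omega$ near $\partial\Omega$.
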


\begin{proof}
    Let $\phi_1, \phi_2\in C([0,T], L^\infty(\Omega) \mathrm{-w}^\ast)$ be arbitrary bounded solutions of \eqref{eq::transport} with vector field $\veli$ and initial condition $\phi_0$, which exist due to \cref{lem::existence}. Hence, $\phi\coloneqq \phi_1 - \phi_2$ is a solution to \eqref{eq::transport} with vector field $\veli$ and initial condition $0$.
    By \cref{remark::linftyweak} there exists $\tilde C > 0$ such that
    \begin{equation}\| \phi \|_{L^\infty((0,T), L^\infty(\Omega))} \leq \|\phi_1\|_{L^\infty((0,T), L^\infty(\Omega))} + \|\phi_2\|_{L^\infty((0,T), L^\infty(\Omega))} \leq \tilde C.
        \label{eq::tildeCestim}
    \end{equation}
    Moreover, due \cref{lem::renormalization}, $\veli$ has the renormalization property, i.e., $\beta(\phi)$ fulfills \eqref{eq::transport_renormalized} and
    \begin{equation*}
        \beta(\phi) \in C([0,T], L^\infty(\Omega) \mathrm{-w}^\ast)
    \end{equation*}
    for all $\beta \in C^1(\mathbb R)$.

    Since $\mathrm{div}(\veli) \in L^p((0,T), L_{\exp}(\Omega))$ due to Hölder's inequality, we know that
    \begin{equation*}
        \int_r^{\min(r+s, T)} \| \mathrm{div}(\veli(t,\cdot)) \|_{L_{\exp}(\Omega)} \mathrm{d}t \leq s^\frac1q \| \mathrm{div}(b)\|_{L^p((0,T), L_{\exp}(\Omega))},
    \end{equation*}
    with for any $r \in [0,T]$, $s > 0$, $q = 1$ if $p = \infty$, and $\frac1q + \frac1p = 1$ else. Thus, there exists $\bar t \in (0,T]$ (independent of the choice of $r$) such that
    \begin{equation}
        \int_r^{s} \| \mathrm{div}(\veli(t,\cdot)) \|_{L_{\exp}(\Omega)} \mathrm{d}t \leq \frac18,
        \label{eq::deltathat__}
    \end{equation}
    for any $r \in [0,T]$ and all $s \in [r, \min(r + \bar t, T)]$.

    \Cref{lem::renormalization} yields that, for an arbitrary $C^1$-function $\beta: \mathbb R \to \mathbb R$, $\beta(\phi)$ is a solution to \eqref{eq::transport_renormalized} with initial condition $\beta(0)$ and vector field $\veli$. Hence, due to \eqref{eq::weak_form} and with the same argumentation as in \cite[Sec.~3.1.1]{jarde2018analysis} (which allow to replace $C_c([0,T) \times \Omega)$ by $C_c([0,T] \times \mathbb R^d)$ if $\veli \in L^1((0,T), \mathrm{BV}_0(\Omega))^d \cap L^\infty((0,T)\times \Omega)^d$ in \cref{definition}), $\phi$ fulfills
    \begin{equation}
        \int_0^{T} \int_\Omega \beta(\phi(t,x)) (\varphi_t (t, x) + \veli (t,x) \cdot \nabla \varphi (t,x) + \varphi (t,x) \mathrm{div}(\veli(t,x))) \mathrm{d} x \, \mathrm{d}t+ \int_\Omega \beta(\phi(0)) \varphi(0,x) \mathrm{d} x =0
        \label{eq::weakform__}
    \end{equation}
    for arbitrary $\varphi \in C_c^\infty([0,T) \times \mathbb R^d)$.

    Let $\tau \in [0,T]$ be arbitrary but fixed.
    We choose $\varphi (x,t) = \varphi_n(t)$, where $\varphi_n: \mathbb R \to [0,1]$ denotes a sequence of mollifications of the function $\mathbb R \to [0,1]$, $t \mapsto H(\tau - t)$, where $H$ denotes the Heaviside function defined as $H(s) = \begin{cases} 1 \quad & \text{if } s \geq 0, \\ 0 & \text{if } s< 0.\end{cases}$\\ Due to \eqref{eq::weakform__}, we have
    \begin{equation*}
        \int_0^T \int_\Omega \beta(\phi(t,x)) ((\varphi_n)_t (t) + \veli (t,x) \cdot \nabla \varphi_n (t) + \varphi_n (t) \mathrm{div}(\veli(t,x))) \mathrm{d} x \, \mathrm \mathrm{d}t + \int_\Omega \beta(\phi(0)) \varphi_n(0) \mathrm{d} x =0,
    \end{equation*}
    and, hence, for $n \to \infty$, since $t \mapsto \int_\Omega \beta(\phi(t,x)) \mathrm{d} x$ is continuous due to $\beta(\phi) \in C([0,T], L^\infty(\Omega)\mathrm{-w^\ast})$, $\lim_{n \to \infty} \int_0^T (\varphi_n)_t(t) \psi(t) \mathrm{d} t = -\psi(\tau)$ for any $\psi \in C([0,T])$, and $\nabla \varphi_n(t) = 0$,
    \begin{equation}
        \int_\Omega \beta(\phi(\tau, x)) \mathrm{d} x   =  \int_\Omega \beta(\phi(0, x))  \mathrm{d} x + \int_0^{\tau} \int_\Omega \beta(\phi(t,x))  \mathrm{div}(\veli(t,x))\mathrm{d} x \, \mathrm d t.
        \label{eq::integral__}
    \end{equation}

    For arbitrary but fixed $\gamma > 0$, we choose 
    \begin{equation*}
        \beta(\phi) \coloneqq \eu^{\frac1\gamma \phi^2 + 1}. \label{defbeta}
    \end{equation*}
    Let $r \geq 0$ be chosen such that $\phi(t, \cdot) = 0$ a.e. for all $t \in [0,r]$, which is fulfilled for $r = 0$.
    Since $\beta(\phi(0)) = \eu$, using \eqref{eq::integral__} with $\tau = \min(r + \bar t, T)$, and the definition of $r$, we obtain
    \begin{equation}
        \begin{multlined}
            \int_\Omega \beta(\phi(\bar t, x)) \mathrm{d} x   =  |\Omega| \eu + \int_r^{\min(r + \bar t,T)} \int_\Omega \beta(\phi(t,x))  \mathrm{div}(\veli(t,x))\mathrm{d} x \, \mathrm d t.
        \end{multlined}
        \label{proof_unique:eq:1}
    \end{equation}
    We consider the latter integral. Due to \cref{lem::hoelder_generalization},
    we know that
    \begin{equation*}
        \frac1{|\Omega|}\int_\Omega \beta(\phi(t,x))  \mathrm{div}(\veli(t,x)))\mathrm{d} x
        \leq  2  \|  \beta(\phi(t, \cdot)) \|_{L{\log} L(\Omega)} \| \mathrm{div}(\veli(t,\cdot)) \|_{L_{\exp}(\Omega)}.
        \label{proof_unique:eq:2}
    \end{equation*}
    Since $\beta(\phi(t, x))\geq \eu$, we know that $\| \beta (\phi(t, \cdot))\|_{L{\log}L(\Omega)}>1$ and $\frac1{|\Omega|} \int_\Omega \beta(\phi(t,x)) \ln_+ (\beta(\phi(t,x))) \mathrm{d} x > 1$.
    Therefore, due to \cref{lem::maxnorm},
    \begin{equation}
        \begin{aligned}[t]
            \| \beta(\phi(t,\cdot))\|_{L{\log} L(\Omega)}
            &\leq \max\left(\frac{1}{|\Omega|} \int_{\Omega}  \beta(\phi(t,x)) \ln_+ (\beta(\phi(t,x))) \mathrm{d} x, 1\right)\\
            & = \frac{1}{|\Omega|} \int_{\Omega} \beta(\phi(t,x)) \ln (\beta(\phi(t,x))) \mathrm{d} x \\
            &\leq \ln\left(\eu^{1 + \frac1\gamma \hat C^2}\right) \frac1{|\Omega|} \int_\Omega \beta(\phi(t,x))\mathrm dx = \left(1 + \frac1\gamma \hat C^2\right) \frac1{|\Omega|} \int_\Omega \beta(\phi(t,x))\mathrm dx,
        \end{aligned}
        \label{proof_unique:eq:3}
    \end{equation}
    for any $\hat C$ such that $\phi(t,\cdot)^2 \leq \hat C^2$ a.e. in $\Omega$ and for a.e. $t \in [r,\min(r+\bar t, T)]$, which holds for $\hat C = \tilde C$ due to \eqref{eq::tildeCestim}.
    Combining \eqref{proof_unique:eq:1}--\eqref{proof_unique:eq:3} yields
    \begin{equation*}
        \frac{1 }{|\Omega|} \int_{\Omega} \beta(\phi(t,x)) \mathrm{d} x  \leq \eu + \int_r^{\min(r + \bar t, T)} 2\left(1 + \gamma^{-1} \hat C^2\right) \left(\frac{1 }{|\Omega|} \int_{\Omega} \beta(\phi(t,x)) \mathrm{d} x\right)   \| \mathrm{div}(\veli(t,\cdot)) \|_{L_{\exp}(\Omega)}  \mathrm{d} t.
    \end{equation*}
    Application of \cref{lem::gronnwall} with  $w(t) = \frac{1 }{|\Omega|} \int_{\Omega} \beta(\phi(t,x)) \mathrm{d} x$, $c_1(t) = 2(1 + \frac1\gamma \hat C^2)\| \mathrm{div}(\veli(t,\cdot)) \|_{L_{\exp}(\Omega)}$ and $c_0 = \eu$ yields
    \begin{equation*}
        \frac{1}{|\Omega|} \int_\Omega \beta(\phi(t,x))\mathrm d x \leq \mathrm{exp}\left(1+ \int_r^{t} 2\left(1 + \gamma^{-1} \hat C^2\right)\| \mathrm{div}(\veli(\tau,\cdot)) \|_{L_{\exp}(\Omega)} \mathrm d \tau\right)
    \end{equation*}
    for a.e. $t \in [r, \min(r + \bar t, T)]$ and therefore, since $\phi \in C([0,T], L^\infty(\Omega)\mathrm{-w^\ast})$, for all $t \in [0,\bar t]$.
    Taking the logarithm on both sides and multiplying by $\gamma$ yields
    \begin{equation*}
        \gamma \ln\left(\frac{1}{|\Omega|} \int_\Omega \beta(\phi(t,x))\mathrm d x\right) \leq \gamma \left(1+ \int_r^t 2\left(1 + \gamma^{-1} \hat C^2\right)\| \mathrm{div}(\veli(\tau,\cdot)) \|_{L_{\exp}(\Omega)} \mathrm d \tau\right),
    \end{equation*}
    and taking the limit $\gamma \to 0^+$ and using \cref{estimates::lem}, we know that
    \begin{equation*}
        \|\phi(t, \cdot)^2\|_{L^\infty(\Omega)} \leq \int_r^t 2 \hat C^2 \| \mathrm{div}(\veli(\tau,\cdot)) \|_{L_{\exp}(\Omega)} \mathrm d \tau
    \end{equation*}
    for all $t \in [r, \min(r + \bar t, T)]$.
    Hence, with \eqref{eq::deltathat__}, we obtain
    \begin{equation*}
        \|\phi(t,\cdot)^2\|_{L^\infty(\Omega)} \leq \bigg(\frac{\hat C}{2}\bigg)^2,
    \end{equation*}
    and iteratively repeating the argumentation with $\frac{\hat C}2$ instead of $\hat C$ yields
    \begin{equation*}
        \|\phi(t,\cdot)^2\|_{L^\infty(\Omega)} \leq \bigg(\frac{\hat C}{2^k}\bigg)^2,
    \end{equation*}
    for all $t \in [r, \min(r + \bar t, T)]$ and $k \in \mathbb N$. Hence,
    \begin{equation}
        \|\phi(t,\cdot)^2\|_{L^\infty(\Omega)} \leq 0,
        \label{proof::unique::end}
    \end{equation}
    for all $t \in [r, \min(r + \bar t, T)]$.
    This shows that $\phi( t, \cdot) = 0$ a.e. for all $t \in [r,\min(R,T)]$ with $R := r + \bar t$. Hence, as long as $R < T$,
    the argumentation \eqref{proof_unique:eq:1}--\eqref{proof::unique::end} can be repeated with $r = R$, which yields
    \begin{equation*}
        \|\phi(t,\cdot)^2\|_{L^\infty(\Omega)} \leq 0,
    \end{equation*}
    for all $t \in [0, T]$.
\end{proof}

Combining \cref{lem::existence} and \cref{theorem::uniqueness}, we obtain the following statement.

\begin{corollary}
    Let $p \in (1, \infty]$, $\phi_0 \in L^\infty(\Omega)$ and $\veli \in  L^1((0,T), \mathrm{BV}_0(\Omega))^d \cap L^\infty((0,T)\times \Omega)^d $ with $$\mathrm{div}(\veli) \in L^p((0,T), L_{\exp}(\Omega)).$$ Then the transport equation \eqref{eq::transport} has a unique weak renormalized solution $$\phi \in C([0,T], L^\infty(\Omega)\mathrm{-w}^\ast).$$ Furthermore,
    \begin{equation*}
        \|\phi (t, \cdot)\|_{L^\infty(\Omega)} \leq \| \phi_0 \|_{L^\infty(\Omega)}
    \end{equation*}
    for any $t \in [0,T]$ and the vector field $\veli$ has the renormalization property.
    \label{corollary::uniqueness}
\end{corollary}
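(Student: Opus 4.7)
The statement is essentially an amalgamation of the preceding results in this subsection, so my plan is to verify that the hypotheses of \cref{lem::existence}, \cref{theorem::uniqueness}, and \cref{lem::renormalization} are all in force and then to cite each one in turn. The only substantive check is that the assumption $\mathrm{div}(\veli) \in L^p((0,T), L_{\exp}(\Omega))$ implies $\mathrm{div}(\veli) \in L^1((0,T) \times \Omega)$, which is what \cref{lem::existence,lem::renormalization} actually require.

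To establish this embedding, I would first use that on the bounded domain $\Omega$ equipped with the normalized measure $\mathrm d \mu = |\Omega|^{-1} \mathrm d x$, the Young function $\Phi_{\exp}$ dominates the identity for $t \geq 0$, which (together with \cref{lem::maxnorm}) yields the continuous embedding $L_{\exp}(\Omega) \hookrightarrow L^1(\Omega)$, with a constant depending only on $|\Omega|$. In time, Hölder's inequality then gives, for $\frac1p + \frac1q = 1$ (and $q = 1$ if $p = \infty$),
\begin{equation*}
    \int_0^T \| \mathrm{div}(\veli(t, \cdot)) \|_{L^1(\Omega)} \, \mathrm{d}t \leq C_\Omega\, T^{1/q}\, \| \mathrm{div}(\veli) \|_{L^p((0,T), L_{\exp}(\Omega))} < \infty,
\end{equation*}
so $\mathrm{div}(\veli) \in L^1((0,T) \times \Omega)$.

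With this in hand, \cref{lem::existence} furnishes a weak solution $\phi \in C([0,T], L^\infty(\Omega)\mathrm{-w}^\ast)$ that satisfies the pointwise-in-time $L^\infty$ bound by $\|\phi_0\|_{L^\infty(\Omega)}$. Since the remaining hypotheses of \cref{theorem::uniqueness}, namely $\veli \in L^1((0,T), \mathrm{BV}_0(\Omega))^d \cap L^\infty((0,T) \times \Omega)^d$ and $\mathrm{div}(\veli) \in L^p((0,T), L_{\exp}(\Omega))$ with $p \in (1,\infty]$, are exactly those assumed here, uniqueness follows immediately. Finally, because $\veli \in L^1((0,T), \mathrm{BV}_0(\Omega))^d \cap L^\infty((0,T) \times \Omega)^d$ and $\mathrm{div}(\veli) \in L^1((0,T) \times \Omega)$, \cref{lem::renormalization} gives the renormalization property, so the unique solution is in fact a renormalized solution.

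There is no real obstacle in this proof; the only point worth being careful about is handling the endpoint $p = \infty$ consistently in the Hölder step, which I would state explicitly as $q = 1$ in that case. Otherwise, the argument is a short chaining of citations.
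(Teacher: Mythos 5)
Your argument is correct and matches the paper, which derives the corollary simply by combining \cref{lem::existence} and \cref{theorem::uniqueness} (with \cref{lem::renormalization} supplying the renormalization property); your extra verification that $\mathrm{div}(\veli) \in L^p((0,T), L_{\exp}(\Omega))$ implies $\mathrm{div}(\veli) \in L^1((0,T)\times\Omega)$ is a worthwhile explicit check that the paper leaves implicit. One small quibble: the embedding $L_{\exp}(\Omega) \hookrightarrow L^1(\Omega)$ follows from $\Phi_{\exp}(t)\geq t$ together with the modular bound $\int_\Omega \Phi_{\exp}(|f|/\|f\|_{\Phi_{\exp}})\,\mathrm d\mu \leq 1$ (or from \cref{lem::hoelder_generalization} with $g\equiv 1$), not from \cref{lem::maxnorm}, which estimates in the opposite direction.
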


\subsection{Stability}
\label{subsec::stability}

Let, for $p \in (1,\infty]$ the space $B$ be given by
\begin{equation*}
    B\coloneqq \lbrace \veli \in  L^1((0,T), \mathrm{BV}_0(\Omega)^d) \cap L^\infty((0,T)\times \Omega)^d \,:\, \mathrm{div}(\veli) \in L^p((0,T), L_{\exp}(\Omega))\rbrace.
\end{equation*}
We present a generalization of the stability result \cite[Thm.~4.1]{jarde2019existence} based on the uniqueness result \cref{theorem::uniqueness}, which follows with the same argumentation as in \cite[Thm.~4.1]{jarde2019existence} building on \cref{corollary::uniqueness} instead of \cite[Thm.~2.1]{jarde2019existence}. For more details we refer to \cref{proof::sec}.

\begin{proposition}
    Let $\veli \in B$ and let the initial value satisfy $\phi_0 \in L^\infty(\Omega)$. Furthermore, let $(\veli^n)_{n \in \mathbb N} \subset B$ and $(\phi_{0,n})_{n \in \mathbb N} \subset L^\infty(\Omega)$ be two sequences with the following properties:
    \begin{enumerate}
        \item $(\phi_{0,n})_{n \in \mathbb N}$ is bounded in $L^\infty(\Omega)$ and converges to $u_0$ in $L^1(\Omega)$,
        \item  $(\veli^n)_{n \in \mathbb N}$ converges strongly to $\veli$ in $L^1((0,T)\times \Omega)^d$, and
        \item
            $(\mathrm{div}(\veli^n))_{n \in \mathbb N}$ converges strongly to $\mathrm{div}(\veli)$ in $L^1((0,T) \times \Omega)$.
    \end{enumerate}
    Then for each $1 \leq p < \infty$, the sequence of unique solutions $(\phi_n)_{n \in \mathbb N} \subset C([0,T], L^\infty(\Omega) \mathrm{-w}^\ast)$ of the transport equation \eqref{eq::transport} with vector fields $\veli^n$ and initial data $\phi_{0,n}$ is a subset of $C([0,T], L^p(\Omega))$ and converges in $C([0,T], L^p(\Omega))$ to the unique solution $\phi \in C([0,T], L^p(\Omega))$ of \eqref{eq::transport} with vector fields $\veli$ and initial value $\phi_0$.
    \label{theorem::stability}
\end{proposition}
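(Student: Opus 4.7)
The plan is to run the DiPerna--Lions stability scheme in the form of \cite[Thm.~4.1]{jarde2019existence}, with the single substantive change being that the uniqueness input needed to identify the weak-$\ast$ limit is supplied by \cref{corollary::uniqueness} rather than by \cite[Thm.~2.1]{jarde2019existence}. This substitution is precisely what accommodates the weaker regularity $\mathrm{div}(\veli) \in L^p((0,T), L_{\exp}(\Omega))$ of the present setting.

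First I would establish uniform bounds on $(\phi_n)_{n \in \mathbb N}$. Applying \cref{corollary::uniqueness} to each pair $(\veli^n, \phi_{0,n})$ gives $\| \phi_n(t,\cdot)\|_{L^\infty(\Omega)} \leq \| \phi_{0,n}\|_{L^\infty(\Omega)}$, uniformly in $n$ and $t$ by hypothesis~(1). Banach--Alaoglu then provides a subsequence converging weakly-$\ast$ in $L^\infty((0,T)\times\Omega)$ to some $\tilde\phi$. Passing to the limit in the weak formulation \eqref{eq::weak_form} amounts to splittings such as
\begin{equation*}
    \phi_n \mathrm{div}(\veli^n) - \tilde\phi \, \mathrm{div}(\veli) = \phi_n (\mathrm{div}(\veli^n) - \mathrm{div}(\veli)) + (\phi_n - \tilde\phi)\, \mathrm{div}(\veli),
\end{equation*}
where the first summand vanishes in $L^1$ by hypothesis~(3) combined with the uniform $L^\infty$ bound, while the second vanishes when tested against $\varphi \in C_c^\infty([0,T)\times\Omega)$ by weak-$\ast$ convergence since $\varphi\,\mathrm{div}(\veli) \in L^1$; an analogous treatment with hypothesis~(2) handles the $\veli^n \cdot \nabla \varphi$ term. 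Consequently $\tilde\phi$ solves the transport equation with data $(\veli, \phi_0)$, and \cref{corollary::uniqueness} yields $\tilde\phi = \phi$, so the full sequence converges weakly-$\ast$.

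To upgrade to strong convergence, I would apply the renormalization property of \cref{lem::renormalization} (applicable to each $\veli^n$ and to $\veli$ via \cref{corollary::uniqueness}) with $\beta(s) = s^2$. This yields two consequences. First, $\phi_n^2 \in C([0,T], L^\infty(\Omega)\mathrm{-w}^\ast)$, so $t \mapsto \|\phi_n(t,\cdot)\|_{L^2(\Omega)}^2$ is continuous and, combined with weak-$\ast$ continuity of $\phi_n$, gives $\phi_n \in C([0,T], L^2(\Omega))$; by the uniform $L^\infty$ bound this entails $\phi_n \in C([0,T], L^p(\Omega))$ for all $p \in [1,\infty)$. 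Second, $\phi_n^2$ itself solves a transport equation with initial data $\phi_{0,n}^2 \to \phi_0^2$ in $L^1$, so repeating the previous argument gives $\phi_n^2 \rightharpoonup^\ast \phi^2$; expanding the square then yields $\phi_n \to \phi$ strongly in $L^2((0,T) \times \Omega)$.

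Finally, to obtain the uniform-in-time statement $\phi_n \to \phi$ in $C([0,T], L^p(\Omega))$, I would combine this strong convergence with equicontinuity of $t \mapsto \int_\Omega \phi_n(t,x) \psi(x)\, \mathrm{d} x$ on a dense subset of $L^{p'}(\Omega)$, obtained directly from the weak formulation and the uniform bounds, followed by an Ascoli-type argument. The main potential obstacle is checking that none of the weak-$\ast$ passages in \cite[Thm.~4.1]{jarde2019existence} implicitly require the stronger hypothesis $\mathrm{div}(\veli) \in L^1((0,T), L^\infty(\Omega))$; since the only essential uses of stronger regularity there are in invoking uniqueness and the renormalization property, and both inputs are now available in the weaker $L_{\exp}$-setting, the outer scheme transfers verbatim.
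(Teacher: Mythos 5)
Your proposal is correct and follows essentially the same route as the paper's proof in \cref{proof::sec}: the DiPerna--Lions stability scheme of \cite[Thm.~4.1]{jarde2019existence} with the uniqueness and renormalization inputs supplied by \cref{corollary::uniqueness}, i.e.\ uniform $L^\infty$ bounds, weak$^\ast$ limit identification via uniqueness, renormalization with $\beta(s)=s^2$ to upgrade to strong convergence, an Arzel\`a--Ascoli equicontinuity argument for uniformity in time, and interpolation with the $L^\infty$ bound to pass to general $p$. The only difference is a harmless reordering (the paper runs the equicontinuity/Ascoli step before the renormalization step and invokes \cite[Lem.~4.2]{jarde2019existence} to convert weak uniform-in-time convergence of $\phi_n$ and $\phi_n^2$ into strong convergence in $C([0,T],L^2(\Omega))$).
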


\begin{remark}
    Under the stronger requirement $\mathrm{div}(\veli) \in L^p((0,T), L^\infty(\Omega))$ on the regularity of the divergence, a generalization is presented in \cite{jarde2019existence}, which replaces \textup{(ii)} in \cref{theorem::stability} with
    \begin{itemize}
        \item[\textup{(ii)}$^\prime$] $(\veli^n)_{n \in \mathbb N}$ is bounded in $L^q((0,T), \mathrm{BV}_0(\Omega))^d$ for some $q>1$ and $\veli^n \rightharpoonup \veli$ in $L^1((0,T)\times \Omega$.
    \end{itemize} Moreover, in Section~5 of \cite{jarde2019existence} another stability result with weaker regularity assumptions is presented. Investigating if our considerations can be adapted to these settings is left for future research.
\end{remark}

\section{Theoretical analysis of image registration problem}
\label{sec::wellposed}

In case we consider stationary vector fields,
one possible modeling choice for image registration is given as follows.

\begin{lemma}
    Let $\Omega \subset \mathbb R^d$ be a bounded Lipschitz domain,
    $W = W^{1,1}(\Omega)^d$, $X = H^{1 +\sigma}(\Omega)^d$ for $\frac12 > \sigma > 0$, $Y = W^{1,\exp}_0 (\Omega)^d$, $Z = W^{1,\infty}_0(\Omega)^d$, $\psi_\gamma(\cdot) = \beta \Psi_{\gamma}(\cdot)$, $\psi_0 (\cdot) = \beta \| \cdot \|_{W^{1,\infty}_0(\Omega)^d}$, $\eta(\cdot) = \frac\alpha2 \| \cdot \|_X^2$ for $\alpha, \beta, \gamma > 0$. Moreover, let $$j(\cdot)\coloneqq J(\Phi(\cdot, T), \phi_{\mathrm{tar}})$$ be continuous with $J(\cdot, \phi_{\mathrm{tar}}): L^p(\Omega) \to \mathbb R$ for some $p \in [1, \infty)$ and bounded from below.  Then properties \ref{assumption:1}--\ref{assumption:1.3} and \ref{assumption.continuity}--\ref{assumption:limgamma} are satisfied.
    \label{lemma::imreg::prop}
\end{lemma}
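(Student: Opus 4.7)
The plan is to verify properties \ref{assumption:1}--\ref{assumption:1.3} and \ref{assumption.continuity}--\ref{assumption:limgamma} in turn, leveraging the machinery from the preceding sections and standard Sobolev theory.

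\textbf{Functional-analytic scaffolding.} Property \ref{assumption:1} is routine: $H^{1+\sigma}(\Omega)^d$ is Hilbert, $W^{1,1}(\Omega)^d$ is separable Banach, and Rellich--Kondrachov yields the compact embedding via $H^{1+\sigma} \hookrightarrow H^1 \hookrightarrow W^{1,1}$. For \ref{assumption:1.1}, I view $Y$ isometrically as a closed subspace of $L_{\exp}(\Omega)^d \times L_{\exp}(\Omega)^{d\times d}$ through $v \mapsto (v, \nabla v)$; by \cref{predualLexp} each factor is the dual of the separable space $L{\log}L$, and the image is weak-$*$ closed (convergence against $C_c^\infty$ test functions identifies distributional gradients and preserves the zero trace), so $Y$ is the dual of the separable quotient of that predual by the annihilator. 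Property \ref{assumption:1.2} is immediate from $L_{\exp}(\Omega) \hookrightarrow L^1(\Omega)$. For \ref{limit.assumption}, both weak convergence in $X$ and weak-$*$ convergence in $Y$ imply convergence of $\langle v^k, \varphi \rangle$ for every $\varphi \in C_c^\infty(\Omega)$, forcing distributional (and therefore $X \cap Y$) equality of the two limits.

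\textbf{Continuity and boundedness of the objective pieces.} Property \ref{assumptions:weakastconv} follows by applying \cref{lem::lsc_exp} componentwise to each $\pm v_j^k$ and $\pm \partial_i v_j^k$, together with the standard weak-$*$ lower semicontinuity of the squared $L^2$ norm in $\Psi_\gamma$. \ref{assumptions:weakconv} is the weak lower semicontinuity of the squared Hilbert norm, and \ref{assumption:13} is immediate from the nonnegativity of $\Psi_\gamma$ and $\eta$ together with the assumed lower bound on $J$. For \ref{assumption:14}, a bound on $\eta(v^k)$ controls $\|v^k\|_X$, while a bound on $\Psi_\gamma(v^k)$ combined with \cref{lem::seqweak_mod} applied to each scalar $v_j^k$ and $\partial_i v_j^k$ gives $L_{\exp}$ bounds on every component and hence a $Y$-bound. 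The decisive step is \ref{assumption:4}: for $v^k \to v$ in $W^{1,1}(\Omega)^d$ with $v^k, v \in Y$, I would first invoke the Sobolev embedding $W^{1,\exp}_0(\Omega)^d \hookrightarrow W^{1,p}_0(\Omega)^d \hookrightarrow C^0(\overline{\Omega})^d$ for $p > d$ (available because $L_{\exp} \hookrightarrow L^p$ for every finite $p$) to place the sequence in the admissibility class $B$ of \cref{subsec::stability}. The $L^1$-convergence of $v^k$ and $\mathrm{div}(v^k)$ then invokes \cref{theorem::stability}, yielding $\phi^k(\cdot, T) \to \phi(\cdot, T)$ in $L^p(\Omega)$, and composition with the continuous $J(\cdot, \phi_{\mathrm{tar}})$ gives $j(v^k) \to j(v)$.

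\textbf{Approximation and limit consistency.} Property \ref{assumption:1.3} is the embedding $W^{1,\infty}_0 \hookrightarrow W^{1,\exp}_0$ coming from $L^\infty \hookrightarrow L_{\exp}$, and \ref{assumption:etacont} is the continuity of the squared Hilbert norm. For \ref{assumption.continuity}, a $Z$-bounded sequence converging in $W = W^{1,1}$ converges together with its weak derivatives in $L^r(\Omega)$ for every finite $r$ (by interpolation with the uniform $L^\infty$ bound), so \cref{remark::continuity} delivers convergence of each scalar exponential-mean term appearing in $\Psi_\gamma$. Property \ref{assumption:estgamma} is \cref{corollary::estimpsi}. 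For \ref{assumption:limgamma}, \cref{estimates::lem} yields $E(u) \to \mathrm{ess\,sup}\,u$ as $\gamma \to 0^+$ for every $u \in L^1(\Omega)$; summing this limit over all scalar components and both signs reproduces $\|v\|_{W^{1,\infty}(\Omega)^d}$ when $v \in Z$, while divergence of at least one essential supremum for $v \in Y \setminus Z$ forces $\Psi_\gamma(v) \to +\infty$.

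\textbf{Main obstacle.} The crux is \ref{assumption:4}: reducing transport-equation stability to $W^{1,1}$-continuity of $v$ hinges on the (not entirely obvious) fact that functions in $W^{1,\exp}_0(\Omega)^d$ are essentially bounded in dimensions $d \in \{2,3\}$, which is what places sequences in $Y$ inside the admissibility class $B$ required by \cref{theorem::stability}. A secondary technical point is the weak-$*$ closedness check underlying \ref{assumption:1.1}, which must be done against $C_c^\infty$ to identify distributional gradients and zero boundary values of the weak-$*$ limit.
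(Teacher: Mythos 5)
Your proposal is correct and follows essentially the same route as the paper: the same property-by-property verification citing \cref{lem::lsc_exp}, \cref{lem::seqweak_mod}, \cref{remark::continuity}, \cref{corollary::estimpsi}, \cref{estimates::lem}, \cref{theorem::stability}, and the predual construction of \cref{predual}, with the same $C_c^\infty$-testing argument for \ref{limit.assumption}. You additionally spell out two points the paper leaves implicit — the Morrey-type embedding $W^{1,\exp}_0(\Omega)^d \hookrightarrow L^\infty(\Omega)^d$ needed to place $Y$ inside the class $B$ for \ref{assumption:4}, and the interpolation step behind \ref{assumption.continuity} — both of which are correct and indeed the right things to check.
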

\begin{proof}
    By the choice of function spaces, \ref{assumption:1}, \ref{assumption:1.2} and \ref{assumption:1.3} are fulfilled. Due to \cref{lemma::5} and the considerations in appendix \ref{predual}, assumption \ref{assumption:1.1} holds. \ref{assumptions:weakastconv} is valid due to \cref{lem::lsc_exp}. \ref{assumption.continuity} holds due to \cref{remark::continuity}. The properties of the norm on the Hilbert space $H^{1+\sigma}(\Omega)^d$ yield \ref{assumptions:weakconv} and \ref{assumption:etacont}. By the choice of $j$ and \cref{theorem::stability}, \ref{assumption:4} holds. The estimate \eqref{eq::proof::2} and the choices of $j$, $\psi$ and $\eta$ also ensure boundedness from below on $X \cap Y$, i.e., property \ref{assumption:13} is valid. \ref{assumption:14} holds due to \cref{lem::seqweak_mod}. \Cref{corollary::estimpsi} yields \ref{assumption:estgamma}. \ref{assumption:limgamma} holds due to \eqref{eq::limit}. \ref{limit.assumption} holds due to the following considerations: Let $(v^k)_{k \in \mathbb N} \subset X \cap Y$ be a sequence that converges weakly to $\hat v$ in $X$ and weakly$^\ast$ to $\bar v$ in $Y$. For $\varphi \in C_c^\infty(\overline \Omega)^d$, we know that $\Phi(\cdot) \coloneqq \int_\Omega \langle \varphi (x), \cdot(x)\rangle \mathrm{d} x$ is an element of $X^\ast$, and, hence $\int_\Omega \langle \varphi(x), \vel^k(x) \rangle \mathrm{d}x \to \int_\Omega \langle \varphi(x), \hat \vel(x) \rangle \mathrm{d}x$.
    In addition, due to the definition of weak$^\ast$ topology, $\Phi: Y \to \mathbb R$ is continuous w.r.t. the  weak$^\ast$ topology. Therefore, $\int_\Omega \langle \varphi(x), \vel^k(x) \rangle \mathrm{d}x \to \int_\Omega \langle \varphi(x), \bar \vel(x) \rangle \mathrm{d}x $. Combining the limits, yields $\int_\Omega \langle \varphi(x), \bar \vel(x) - \hat \vel(x) \rangle \mathrm{d}x = 0$. Since $\varphi \in C_c^\infty(\overline \Omega)^d$ was chosen arbitrarily, we know due to the fundamental lemma of the calculus of variations that $\bar \vel = \hat \vel$ a.e.
\end{proof}

Combining \cref{theorem::full,lemma::imreg::prop} yields the following.

\begin{corollary}[well-posedness of relaxed image registration problem]
    Let $\Omega \subset \mathbb R^d$ be a bounded domain, $\sigma \in (0,\frac12)$, $\alpha,\beta, \gamma > 0$, and $J(\cdot, \phi_{\mathrm{tar}}): L^p(\Omega) \to \mathbb R$ be continuous for some  $p \in [1,\infty)$ and bounded from below. Moreover, let $\phi_0, \phi_{\mathrm{tar}} \in L^\infty(\Omega)$. Then the optimization problem
    \begin{equation}
        \begin{aligned}
            \min_{v \in H^{1+\sigma}(\Omega)^d \cap W^{1,{\exp}}(\Omega)^d} &J(\phi(\cdot , T), \phi_{\mathrm{tar}}) + \beta \Psi_\gamma(v) + \frac{\alpha}2 \| v \|_{H^{1+\sigma}(\Omega)^d}^2 \\
            &\begin{aligned}
                \text{s.t. }  \partial_t \phi + \vel \cdot \nabla \phi &= 0 && \text{in } \Omega \times (0,T), \\
                \phi(\cdot, 0) &= \phi_0 && \text{on } \Omega,
            \end{aligned}
        \end{aligned}
    \end{equation}
    has a solution.
\end{corollary}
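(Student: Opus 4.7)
The plan is to invoke the abstract existence result \cref{theorem::full} with the concrete function-space setting supplied by \cref{lemma::imreg::prop}. First I would match the abstract notation to the problem at hand: set $X = H^{1+\sigma}(\Omega)^d$, $Y = W_0^{1,\exp}(\Omega)^d$, $W = W^{1,1}(\Omega)^d$, and $Z = W_0^{1,\infty}(\Omega)^d$, with $\psi_\gamma = \beta \Psi_\gamma$, $\eta(v) = \tfrac{\alpha}{2}\|v\|_X^2$, and define the reduced objective $j(v) \coloneqq J(\phi(\cdot,T), \phi_{\mathrm{tar}})$, where $\phi$ is the weak solution of the transport equation \eqref{eq::transport} driven by the stationary vector field $v$.

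Next, I would justify that $j$ is well-defined on $X \cap Y$. Interpreting $v \in Y$ as a time-independent vector field $b(t,\cdot) \equiv v$, we have $v \in \mathrm{BV}_0(\Omega)^d \cap L^\infty((0,T)\times\Omega)^d$ and $\mathrm{div}(v) \in L^\infty((0,T), L_{\exp}(\Omega))$, so \cref{corollary::uniqueness} yields a unique weak renormalized solution $\phi \in C([0,T], L^\infty(\Omega)\text{-}\mathrm{w}^\ast)$. Consequently $\phi(\cdot,T) \in L^\infty(\Omega) \hookrightarrow L^p(\Omega)$ and $j(v) \in \mathbb{R}$ by continuity of $J(\cdot, \phi_{\mathrm{tar}})$, so the objective of the corollary is a well-defined real-valued function of $v$.

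Finally, I would apply \cref{lemma::imreg::prop}, which certifies that properties \ref{assumption:1}--\ref{assumption:14} hold for exactly this data (notably, \ref{assumption:4} via \cref{theorem::stability}, \ref{assumptions:weakastconv} via \cref{lem::lsc_exp}, \ref{assumption:14} via \cref{lem::seqweak_mod}, and \ref{assumption:13} via \eqref{eq::proof::2} together with the boundedness-from-below hypothesis on $J$). An appeal to \cref{theorem::full} then produces a global minimizer of the relaxed problem, which is the assertion. I foresee no substantive obstacle: the proof is essentially a bookkeeping combination of \cref{theorem::full} and \cref{lemma::imreg::prop}, and the only point worth explicit mention is the well-definedness of $j$ verified in the previous paragraph, which follows directly from the definition of $W_0^{1,\exp}(\Omega)^d$ and the embeddings already used in \cref{lemma::imreg::prop}.
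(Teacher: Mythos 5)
Your proposal is correct and follows exactly the paper's (one-line) argument: combine \cref{theorem::full} with the verification of properties \ref{assumption:1}--\ref{assumption:14} in \cref{lemma::imreg::prop}. The additional paragraph checking well-definedness of $j$ via \cref{corollary::uniqueness} (using that a stationary $v \in W_0^{1,\exp}(\Omega)^d$ gives $\mathrm{div}(v) \in L^\infty((0,T), L_{\exp}(\Omega))$ and, since $\nabla v \in L_{\exp}(\Omega)^{d\times d} \subset L^q(\Omega)^{d\times d}$ for all $q < \infty$, also $v \in L^\infty(\Omega)^d \cap \mathrm{BV}_0(\Omega)^d$) is a worthwhile detail that the paper leaves implicit.
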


Due to \cref{remark::discont}, the regularization term is not differentiable on $W^{1,\exp}(\Omega)^d$; it is not even continuous. However, discretizations that yield functions in $W^{1,\infty}(\Omega)^d$ allow for a way to circumvent this, see \cref{subsec::differentiability}. In order to verify \ref{assumption::discretization}, for any $w \in H^{1+\sigma}(\Omega)^d \cap W_0^{1,\infty}(\Omega))^d$, we construct a sequence of discretized functions that converges to $w$ w.r.t. the strong topology in $H^{1+\sigma}(\Omega)^d$ in such a way that the $W^{1,\infty}_0(\Omega)^d$-norm stays bounded.

\begin{lemma}
    Let $\Omega \subset \mathbb R^d$, $d \in \lbrace 1, 2, 3\rbrace$, be a bounded domain with smooth boundary.
    Let $X_k$ be the Lagrange $\mathcal P_1$ finite element space on the discretization $\Omega_{\frac1k}$ with mesh size smaller than or equal to $\frac1k$. Moreover, let $0 < \sigma < \frac12$, $X = H^{1+\sigma}(\Omega)^d$ and $Z = W_0^{1,\infty}(\Omega)^d$. Then \ref{assumption::discretization} is satisfied.
    \label{p16}
\end{lemma}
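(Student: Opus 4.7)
The plan is to take $w^k \coloneqq \Pi_k w$, where $\Pi_k\colon H^1_0(\Omega)^d \to X_k$ is a Scott--Zhang type quasi-interpolant that preserves the homogeneous Dirichlet trace and is uniformly (in $k$) stable in both $W^{1,\infty}(\Omega)^d$ and $H^{1+\sigma}(\Omega)^d$. Nestedness $X_k \subset X_{k+1}$ is arranged by constructing the triangulation behind $\Omega_{1/(k+1)}$ as a red refinement of that behind $\Omega_{1/k}$ (relabeling the sequence if necessary so mesh sizes are $\leq 1/k$); since $\partial\Omega$ is smooth but in general not polyhedral, an isoparametric correction near $\partial\Omega$ is used to guarantee $X_k \subset W^{1,\infty}_0(\Omega)^d \subset X \cap Z$.

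The $W^{1,\infty}$-stability of $\Pi_k$ immediately yields the uniform bound $\|w^k\|_Z \leq C\|w\|_Z$, so the only nontrivial point is the strong convergence $w^k \to w$ in $H^{1+\sigma}(\Omega)^d$. Since $1+\sigma \in (1,\tfrac{3}{2})$ lies below the trace threshold, $H^{1+\sigma}_0(\Omega)^d$ coincides with the closure of $C_c^\infty(\Omega)^d$ in the $H^{1+\sigma}$-topology, and because $w \in W^{1,\infty}_0(\Omega)^d$ has vanishing trace we have $w \in H^{1+\sigma}_0(\Omega)^d$. For arbitrary $\eta>0$, choose $\varphi_\eta \in C_c^\infty(\Omega)^d$ with $\|w-\varphi_\eta\|_{H^{1+\sigma}} < \eta$ and split
\begin{equation*}
    \|w - \Pi_k w\|_{H^{1+\sigma}} \leq \|w-\varphi_\eta\|_{H^{1+\sigma}} + \|\Pi_k(\varphi_\eta-w)\|_{H^{1+\sigma}} + \|\varphi_\eta - \Pi_k\varphi_\eta\|_{H^{1+\sigma}}.
\end{equation*}
The first two terms are $\leq (1+C)\eta$ by $H^{1+\sigma}$-stability, while the third satisfies the classical estimate $\|\varphi_\eta - \Pi_k\varphi_\eta\|_{H^{1+\sigma}} \leq C k^{-(1-\sigma)}\|\varphi_\eta\|_{H^2}$, which is finite because $\varphi_\eta \in H^2(\Omega)^d$ and piecewise $\mathcal P_1$ functions lie in $H^s$ for $s<\tfrac{3}{2}$; hence the third term vanishes as $k \to \infty$. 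Passing to $\limsup_{k\to\infty}$ and then sending $\eta \to 0^+$ yields the desired convergence.

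The main obstacle I expect is technical rather than conceptual: simultaneously realizing the boundary-conforming inclusion $X_k \subset W^{1,\infty}_0(\Omega)^d$ on a curved domain \emph{and} maintaining the fractional $H^{1+\sigma}$-stability and approximation properties of $\Pi_k$. Each ingredient is standard on its own---the fractional bounds follow by real interpolation between $H^1$ and $H^2$ estimates for Scott--Zhang---but combining them cleanly requires either an isoparametric discretization aligned with $\partial\Omega$ or a careful zero-extension argument from $\Omega_{1/k}\subset\Omega$ to $\Omega$, with uniform-in-$k$ control of all constants, and some care when $\sigma$ is close to $\tfrac12$.
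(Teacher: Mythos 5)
Your proposal is correct in outline but takes a genuinely different route from the paper. The paper proceeds constructively: it extends $w$ by zero to $\mathbb R^d$ (legitimate because $w$ has zero boundary values and $\sigma<\tfrac12$), mollifies to obtain a smooth approximant whose Lipschitz constant is not increased, corrects the resulting nonzero boundary values by subtracting a controlled extension of the exterior part, and only then applies \emph{nodal} interpolation, citing an $H^{1+\sigma}$ error estimate for smooth functions. You instead apply a boundary-condition-preserving Scott--Zhang quasi-interpolant directly to $w$ and split the error using the density of $C_c^\infty(\Omega)^d$ in $H^{1+\sigma}(\Omega)^d\cap H^1_0(\Omega)^d$ (valid precisely because $1+\sigma<\tfrac32$). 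What your approach buys is the elimination of the mollification and boundary-correction steps -- the quasi-interpolant is defined for $w\in W^{1,1}$ and preserves the homogeneous trace by construction, and the uniform $W^{1,\infty}$-stability gives the $Z$-bound in one line; the price is that you must invoke the uniform \emph{fractional} $H^{1+\sigma}$-stability of Scott--Zhang for the middle term $\|\Pi_k(\varphi_\eta-w)\|_{H^{1+\sigma}}$, which is a known but not entirely classical result. Your parenthetical justification of it is too quick: naive real interpolation between $H^1\to H^1$ and $H^2\to H^2$ bounds does not apply, since $\Pi_k$ does not map into $H^2$; the correct argument localizes the fractional seminorm and uses inverse estimates (as in the literature on Scott--Zhang in fractional-order spaces), and this is exactly where the restriction $1+\sigma<\tfrac32$ enters again. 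Your order of limits ($k\to\infty$ first at fixed $\eta$, then $\eta\to 0$) and the $O(k^{-(1-\sigma)})$ rate for the $H^2$-smooth remainder are both correct. Both your proof and the paper's treat the curved-boundary/nestedness issues for the polyhedral approximations $\Omega_{1/k}$ only informally; you at least flag this explicitly, and since \ref{assumption::discretization} only requires boundedness (not convergence) of $\|w^k\|_Z$, nothing in your argument breaks there.
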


\begin{proof}
    Let $\epsilon > 0$ be arbitrary but fixed, and $w \in H^{1+\sigma}(\Omega)^d \cap W_0^{1,\infty}(\Omega)^d$. We first extend $w$ to a function $\tilde w \in H^{1 + \sigma}(\mathbb R^d)^d \cap W^{1,\infty}(\mathbb R^d)^d$ by setting $\tilde w(x) = w(x)$ for all $x \in \overline{\Omega}$ and $\tilde w(x) = 0$ for $x \in \mathbb R^d\setminus \overline{\Omega}$ (\cite[Thm.~11.4]{lions2012non}). Mollification with $\varphi_\eta (x) \coloneqq \eta^{-d} \varphi(\frac{x}\eta)$, where $\varphi: \mathbb R^d \to \mathbb R$ being compactly supported in $B_1(0)$, with $\int_{\mathbb R^d} \varphi(x) \mathrm{d}x = 1$, and $\lim_{\eta \to 0^+} \varphi_\eta (x) = \delta(x)$ then yields an approximation $\tilde w_\eta (x) \in C_c^\infty(\mathbb R^d)$ that converges to $\tilde w$ w.r.t. to the $H^{1+\sigma}(\mathbb R^d)^d$-norm for $\eta \to 0^+$. Therefore, we can choose $\eta = \eta(\epsilon) \leq \epsilon$ such that \begin{equation}\| \tilde w(x) - \tilde w_{\eta(\epsilon)}(x)\|_{H^{1+\sigma}(\mathbb R^d)^d} \leq \epsilon.
    \label{proof::estim1}\end{equation} Moreover, mollification does not increase the Lipschitz constant, i.e., \begin{equation*}| \tilde w_\eta |_{W^{1,\infty}(\mathbb R^d)^d} \leq | \tilde w |_{W^{1,\infty}(\mathbb R^d)^d} \leq \| w \|_{W^{1,\infty}_0(\Omega)^d},\end{equation*} where 
    $ | \cdot |_{W^{1,\infty}(\mathbb R^d)^d}$ is defined analogously to \eqref{w1inftynorm} on $\mathbb R^d$ instead of $\Omega$. Let $\hat w(x) = E(\tilde w_{\eta(\epsilon)})$, where $E: H^{1+\mu}(\mathbb R^d\setminus \Omega)^d  \to H^{1+ \mu}(\Omega)^d$, for all $\mu > 0$, denotes a continuous extension operator such that $E(u)\vert_{\partial \Omega} = u\vert_{\partial \Omega}$, $E(u) \in C^\infty_c(\Omega)$
    and there exists a constant $C>0$ such that
    \begin{equation*}
        | E(u) |_{W^{1,\infty}(\Omega)^d} \leq C \| u \|_{W^{1,\infty}(\mathbb R^d \setminus \Omega)^d}
    \end{equation*}
    for all $u \in W^{1,\infty}(\mathbb R^d \setminus \Omega)^d$
    (e.g., by reflecting $u\vert_{\mathbb R^d\setminus \Omega}$ at the boundary along the fibers of the tubular neighborhood and multiplying by a function that is in $C^\infty(\Omega)$, $1$ on $\partial \Omega$ and compactly contained in the tubular neighborhood of $\partial \Omega$). Here, $| \cdot |_{W^{1,\infty}(\Omega)^d}$, defined as \eqref{w1inftynorm}, denotes a seminorm on $W^{1,\infty}(\Omega)^d$. Due to continuity of the extension operator, \eqref{proof::estim1}, and the fact that $\tilde w\vert_{\mathbb R^d \setminus \Omega } = 0$, there exists a constant $\tilde C$ such that $\| \hat w \|_{H^{1 + \sigma}( \Omega)^d} \leq \tilde C \epsilon$, and $| \hat w |_{W^{1,\infty}(\Omega)^d} \leq \tilde C \| w \|_{W^{1,\infty}_0(\Omega)^d}$.  Define $\check w \coloneqq \tilde w_{\eta(\epsilon)} - \hat w $, which is an element of $W^{1,\infty}_0(\Omega)^d$ and bounded $W^{1,\infty}_0$-norm by a constant that is independent of $\epsilon$.
    Due to smoothness of $\check w$ and \cite[Thm.~2.6]{belgacem2001some} there exists $\ell_\epsilon \in \mathbb N$ such that $\| \Pi_\frac1{\ell_\epsilon}^1 (\check w) - \check w \|_{H^{1+ \sigma}(\Omega)^d} \leq \frac\epsilon2$, where $\Pi_h^1$ denotes the nodal interpolation operator for mesh size $h$. Moreover, by definition 
    \begin{equation*}
        \| \Pi_\frac1{\ell_\epsilon}^1(\check w)\|_{W^{1,\infty}(\Omega)^d} \leq \| \check w \|_{W^{1,\infty}_0(\Omega)^d} \leq \check C \| w \|_{W^{1,\infty}_0(\Omega)^d}
    \end{equation*} for a constant $\check C > 0$. Combining the estimates shows that \ref{assumption::discretization} is satisfied.
\end{proof}

The results derived in \cref{sec::approxnorm}--\cref{sec::wellposed} show that the conceptual framework presented in \cref{sec::general_framework} can be applied to an optical flow based optimal control formulation for image registration. In particular, due to \cref{lemma::imreg::prop} and \cref{p16} the properties \ref{assumption:1}--\ref{assumption:limgamma} are satisfied. Hence, the subsequent statement follows directly from \cref{theorem::convergence}.

\begin{corollary}
    Let $\Omega \subset \mathbb R^d$ be a bounded domain with smooth boundary.
    Moreover, for $k \in \mathbb N$, let $X_k$ be the Lagrange $\mathcal P_1$ finite element space on the discretization $\Omega_{\frac1k}$ with mesh size smaller than or equal to $\frac1k$. Let $\sigma \in (0,\frac12)$, $\alpha, \beta > 0$, and $J(\cdot, \phi_{\mathrm{tar}}) : L^p(\Omega)^d \to \mathbb R$ be continuous for some $p \in [1,\infty)$ and bounded from below. Let $j(v)\coloneqq J(\phi(\cdot, T), \phi_{\mathrm{tar}})$, with $\phi = \phi(v)$ being the solution of
    \begin{equation*}
        \begin{aligned}
            \partial_t \phi + \vel \cdot \nabla \phi &= 0 && \text{in } (0,T) \times \Omega, \\
            \phi(\cdot, 0) &= \phi_0 &&\text{on } \Omega.
        \end{aligned}
    \end{equation*}
    Moreover, let $\phi_0, \phi_{\mathrm{tar}} \in L^\infty(\Omega)$. For $\gamma > 0$, let $v_\gamma^k$ be defined as a global solution of
    \begin{equation}
        \min_{v \in X_k} j(v) + \beta \Psi_{\gamma}(v) + \frac{\alpha}2 \| v \|_{H^{1+\sigma}(\Omega)^d}^2.
    \end{equation}
    Then there exists a sequence $(\gamma_k)_{k \in \mathbb N}$ with $\gamma_k >0 $ for all $k \in \mathbb N$ and $\lim_{k \to \infty} \gamma_k = 0$ such that a subsequence of $(v_{\gamma_k}^k)_{k \in \mathbb N}$ converges (w.r.t. the strong topology in $W^{1,1}(\Omega)^d$, the weak topology in $H^{1+\sigma}(\Omega)^d$, and the weak$^\ast$-topology in $W^{1,\exp}(\Omega)^d$) to $\bar v$, where $\bar v$ is a global solution of
    \begin{equation*}
        \min_{v \in W^{1,\infty}_0(\Omega)^d \cap H^{1+\sigma}(\Omega)^d} j(v) + \beta \| v\|_{W^{1,\infty}_0(\Omega)^d} + \frac{\alpha}2 \| v \|_{H^{1+\sigma}(\Omega)^d}^2.
        \label{originalprob}
    \end{equation*}
\end{corollary}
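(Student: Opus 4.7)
The plan is to recast the corollary as an instance of the abstract framework of \Cref{sec::general_framework} and then invoke \Cref{theorem::convergence} directly. First I would identify the spaces and functionals: set $W = W^{1,1}(\Omega)^d$, $X = H^{1+\sigma}(\Omega)^d$, $Y = W_0^{1,\exp}(\Omega)^d$, and $Z = W_0^{1,\infty}(\Omega)^d$, with $\psi_\gamma = \beta \Psi_\gamma$, $\psi_0 = \beta \|\cdot\|_Z$, $\eta = \frac\alpha2 \|\cdot\|_X^2$, and $j(v) = J(\phi(\cdot,T), \phi_{\mathrm{tar}})$, where $\phi$ solves the transport equation driven by $v$. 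Under this identification, the discretized problem in the corollary is precisely \eqref{general:discretized:opt:prob}, and the limiting problem to be solved by $\bar v$ is precisely \eqref{limiting:optprob}.

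Second, I would check that properties \ref{assumption:1}--\ref{assumption:limgamma} all hold for this specific setting. The bulk of the verification is already packaged in \Cref{lemma::imreg::prop}, which delivers \ref{assumption:1}--\ref{assumption:1.3} and \ref{assumption.continuity}--\ref{assumption:limgamma} under the hypotheses of the corollary (in particular, the continuity assumption on $j$ used for \ref{assumption:4} is exactly the output of \Cref{theorem::stability} combined with the assumed continuity of $J(\cdot, \phi_{\mathrm{tar}})$, and the boundedness from below feeds into \ref{assumption:13}). The remaining discretization property \ref{assumption::discretization} is supplied by \Cref{p16} for the Lagrange $\mathcal{P}_1$ finite element spaces $X_k$ on $\Omega_{1/k}$; the nestedness $X_k \subset X_{k+1}$ and the existence of approximating sequences with uniformly bounded $W^{1,\infty}$-norm are precisely what \Cref{p16} provides.

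Finally, once the hypotheses are verified, \Cref{theorem::convergence} applies verbatim and produces the claimed sequence $(\gamma_k)_{k \in \mathbb N}$ with $\gamma_k \to 0$ and the subsequential convergence of $(v_{\gamma_k}^k)$ — strongly in $W^{1,1}(\Omega)^d$, weakly in $H^{1+\sigma}(\Omega)^d$, and weakly$^\ast$ in $W^{1,\exp}(\Omega)^d$ — to a global minimizer $\bar v$ of the limiting problem. No substantively new argument is required beyond bookkeeping the correspondence between abstract objects (the framework) and concrete objects (the image registration problem). The genuine mathematical work has already been carried out in preparing \Cref{lemma::imreg::prop,p16} and the well-posedness and stability theory of \Cref{sec::transporteq}; there is no real obstacle left at this stage, so the proof should be essentially a one-line invocation of \Cref{theorem::convergence} after the identifications above.
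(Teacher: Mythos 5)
Your proposal is correct and matches the paper's own argument exactly: the paper likewise verifies properties \ref{assumption:1}--\ref{assumption:limgamma} by combining \cref{lemma::imreg::prop} (which covers everything except \ref{assumption::discretization}) with \cref{p16} (which supplies \ref{assumption::discretization} for the $\mathcal P_1$ spaces), and then concludes by direct application of \cref{theorem::convergence}. No further comment is needed.
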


\section{Conclusion}

We have considered the image registration problem \eqref{opt::prob} and a sequence of relaxed optimization problems that approximate the solution of \eqref{opt::prob}. We introduced a conceptual framework, for which we have proven well-posedness of the relaxation, approximation by a sequence of semidiscretized problems, and convergence w.r.t. the relaxation parameter $\gamma$. Moreover, we have verified that the conceptual framework is applicable to image registration by collecting results on Orlicz spaces, establishing existence, uniqueness and stability results for the linear hyperbolic transport equation, and discussing the relaxation of a $W^{1,\infty}_0$-norm.

\section*{Acknowledgements}

Johannes Haubner would like to thank Johannes Milz for helpful discussions, in particular on the conceptual framework, the approximation of the $L^\infty$-norm, and the use of Orlicz spaces in statistics.

\appendix

\section{Proof of \texorpdfstring{\cref{lem::existence}}{existence result}}
\label{sec::proofexistence}

We give the proof of \cref{lem::existence}.
Let $(\vel^n)_{n \in \mathbb N} \subset C^\infty( (0,T) \times \Omega)^d$ be a sequence of vector fields such that $\vel^n \to \vel$ in $L^1((0,T) \times \Omega)^{d}$ and $\mathrm{div}(\vel^n) \to \mathrm{div}(\vel) \in L^1((0,T) \times \Omega)$. Then, according to \cite[Theorem 2.1]{jarde2019existence} (which collects results from \cite{crippa2008flow, crippa2014initial}), the transport equation \eqref{eq::transport} with vector field $\vel^n$ and initial condition $\phi_0$ attains a unique solution $\phi_n \in C([0,T], L^\infty(\Omega) \mathrm{-w}^\ast)$, which fulfills the weak form of the transport equation and
\begin{align*}
    \| \phi_n (t, \cdot) \|_{L^\infty(\Omega)} \leq \| \phi_0 \|_{L^\infty(\Omega)}
\end{align*}
for each $t \in [0,T]$ and, hence, 
\begin{align*}
    \| \phi_n \|_{L^\infty((0,T) \times \Omega)} \leq \| \phi_0 \|_{L^\infty(\Omega)}.
\end{align*}
Using Banach--Alaoglu and the separability of the pre-dual $L^1((0,T) \times \Omega)$ of $L^\infty((0,T) \times \Omega)$, we know that there exists a $L^\infty$-weak$^\ast$ converging subsequence $(\phi_n)_{n \in K}$, $K\subset \mathbb N$, with limit $\bar \phi \in L^\infty((0,T) \times \Omega)$, which fulfills 
\begin{align}
    \| \bar \phi \|_{L^\infty((0,T) \times \Omega)} \leq \| \phi_0 \|_{L^\infty(\Omega)}.
    \label{proof::eq::1}
\end{align}
Moreover, due to limit considerations of the weak forms involving $\phi_n$ and $\vel^n$ for $n \in K$, $(\phi_n)_{n \in K}$ converges $L^\infty$-weakly$^\ast$ to a solution $\phi$ of the transport equation. Hence $\bar \phi = \phi$ a.e.
Assume that there exist $\epsilon > 0$ and $\tau \in [0,T]$ with 
\begin{align*}\| \phi (\tau, \cdot)\|_{L^\infty(\Omega)} \geq \| \phi_0 \|_{L^\infty(\Omega)} + \epsilon. \end{align*} 
Then, there exists $\psi \in L^1(\Omega)$ with $\| \psi \|_{L^1(\Omega)} = 1$ such that 
$\int_\Omega \phi(\tau, x) \psi (x) \mathrm{d}x \geq \| \phi_0 \|_{L^\infty(\Omega)} + \frac{\epsilon}2$ and due to $\phi \in C([0,T], L^\infty(\Omega)\mathrm{-w}^\ast)$, there exist $\delta > 0$ and $r,s \in [0,T]$ with $r < s$, $| r - s| = \delta$ and $\tau \in [r,s]$ such that 
\begin{align}\int_\Omega \phi(t, x) \psi (x) \mathrm{d}x \geq \| \phi_0 \| + \frac{\epsilon}4
\label{proof::eq::2}
\end{align}
for all $t \in [r, s]$. 
Hence, since $\| \frac{1}{s-r} 1_{[r,s]} \psi \|_{L^1((0,T)\times \Omega)} = 1$, we obtain with \eqref{proof::eq::2} that
\begin{align*}
    \| \phi \|_{L^\infty((0,T)\times\Omega)} &\geq \int_0^T \int_\Omega \frac{1}{s-r} 1_{[r,s]}(t) \psi(x) \phi(t,x) \mathrm{d}x \mathrm{d}t\\
    & = \frac{1}{s-r} \int_r^s \int_\Omega \psi(x) \phi(t,x) \mathrm{d}x \mathrm{d}t \\
    & \geq \| \phi_0 \| + \frac{\epsilon}4,
\end{align*}
which contradicts \eqref{proof::eq::1}.

\section{Proof of \texorpdfstring{\cref{theorem::stability}}{stability theorem}}
\label{proof::sec}

For the sake of completeness, we give a proof of \cref{theorem::stability} based on \cite{jarde2019existence}.
Let $n \in \mathbb N$ and $\phi_n \in C([0,T], L^\infty(\Omega)\mathrm{-w}^\ast)\cap L^\infty((0,T)\times\Omega)$ be the solution of \eqref{eq::transport} with vector field $\veli^n$ and initial value $\phi_{0,n}$. Due to \cref{corollary::uniqueness}, we know that there exists a constant $C_1 > 0$ such that $(\phi_n)_{n \in \mathbb N}$
fulfills
\begin{equation*}
    \| \phi_n(t, \cdot) \|_{L^\infty(\Omega)} \leq \| \phi_{0,n} \|_{L^\infty(\Omega)} \leq C_1
\end{equation*}
for all $n \in \mathbb N$ and $t \in [0,T]$.
Hence, an arbitrary subsequence has a convergent subsequence $(\phi_n)_{n \in K}$, $K \subset \mathbb N$, with weak$^\ast$ limit $\phi \in L^\infty(\Omega \times (0,T))$. Since $\phi_n$ fulfills the weak formulation \eqref{eq::weak_form} with vector field $b_n$ and initial condition $\phi_{0,n}$, it follows from $L^\infty((0,T)\times \Omega)^d$-weak$^\ast$ convergence of $(\phi_n)_{n \in K}$, and strong $L^1((0,T)\times \Omega)^d$ convergence of $(\veli^n)_{n \in K}$ and $(\mathrm{div}(\veli^n))_{n \in K}$, and strong $L^1(\Omega)$-convergence of $(\phi_{0,n})_{n \in K}$ that the limit $\phi$ fulfills \eqref{eq::weak_form} with vector field $\veli$ and initial condition $\phi_0$. Due to uniqueness (\cref{theorem::uniqueness}) this limit is independent of the choice of the subsequence.

In order to establish the stated regularity and convergence results we consider
\begin{equation*}
    g_{n,\varphi} (t) \coloneqq \langle \phi_n(t, \cdot), \varphi \rangle
\end{equation*}
for $\varphi \in C_c^\infty(\Omega)$.
First of all, let $p = 2$ (and afterwards use this result to generalize to $1 \leq p < \infty$).
With the above considerations and due to uniqueness of the solution of \eqref{eq::weak_form} with vector field $b$ and initial condition $u_0$, a standard proof by contradiction shows that
\begin{equation}
    \lim_{n \to \infty} g_{n, \varphi}(t, \cdot) = \langle \phi(t, \cdot), \varphi \rangle
    \label{eq::stability::proof::limit}
\end{equation}
for almost every $\varphi \in L^2(\Omega)$ and a.e. $t \in (0,T)$.
In order to apply Arzel\'a--Ascoli, analogously to \cite{jarde2019existence}, we show uniform equicontinuity of $\lbrace g_{n,\varphi}\,:\, n \in \mathbb N\rbrace$ for arbitrary $\varphi \in L^2(\Omega)$, i.e.
\begin{equation}
    \forall \epsilon > 0 \ \exists \delta > 0 \ \text{such that } d(t_1, t_2) < \delta \Rightarrow \ | g_{n, \varphi}(t_1, \cdot) - g_{n,\varphi} (t_2, \cdot) | < \epsilon \quad \forall n \in \mathbb N.
    \label{eq::uniform_equicontinuity}
\end{equation}
Since $\phi_n$ fulfills \eqref{eq::weak_form} with vector field $b_n$ and initial condition $\phi_{0,n}$, we know for arbitrary $\psi \in C_c^\infty((0,T))$ that
\begin{equation*}
    \begin{aligned}
        \int_0^T \psi(t) \frac{d}{d t} g_{n, \varphi} (t, \cdot) \mathrm d t &= - \int_0^T \psi^\prime(t) \langle \phi_n(t, \cdot), \varphi \rangle \mathrm{d}t\\
        &= \int_0^T \psi(t) [ \langle \phi_n(t, \cdot) \veli^n(t, \cdot), \nabla \varphi \rangle + \langle \phi_n(t, \cdot) \mathrm{div}(\veli^n(t, \cdot)), \varphi \rangle] \mathrm d t,
    \end{aligned}
\end{equation*}
i.e.~$(g_{n,\varphi})_{n \in \mathbb N}$ is weakly differentiable with derivative
\begin{equation*}
    g_{n, \varphi}^\prime (t, \cdot) = \langle \phi_n(t, \cdot) \veli^n(t, \cdot), \nabla \varphi\rangle + \langle \phi_n(t, \cdot) \mathrm{div}(\veli^n(t, \cdot)), \varphi \rangle.
\end{equation*}
Hence, for a sequence $(\varphi_k)_{k \in \mathbb N} \subset C_c^\infty(\Omega)$ converging to $\varphi \in L^2(\Omega)$ and $0 \leq t_1 < t_2 \leq T$, we obtain
\begin{equation}
    \begin{multlined}[t]
        |g_{n,\varphi} (t_1, \cdot) - g_{n, \varphi} (t_2, \cdot)| \\
        \begin{aligned}
            & \leq |g_{n, \varphi} (t_1, \cdot) - g_{n, \varphi_k} (t_1, \cdot) | + | g_{n, \varphi_k} (t_1, \cdot) - g_{n, \varphi_2} (t_2, \cdot) | +
            | g_{n, \varphi k} (t_2, \cdot) - g_{n, \varphi} (t_2, \cdot)| \\
            & = |\langle \phi_n(t_1, \cdot), \varphi - \varphi_k \rangle| + |\langle \phi_n(t_2, \cdot), \varphi - \varphi_k \rangle| + |\int_{t_1}^{t_2} g_{n, \varphi_k}^\prime (t, \cdot) \mathrm{d}t| \\
            & \leq ( \| \phi_n(t_1, \cdot)\|_{L^2(\Omega)} + \| \phi_n(t_2, \cdot)\|_{L^2(\Omega)}) \| \varphi_k - \varphi \|_{L^2(\Omega)} + \int_{t_1}^{t_2} |g^\prime_{n, \varphi_k} (t)| \mathrm{d}t.
        \end{aligned}
    \end{multlined}
    \label{eq::stability::proof}
\end{equation}
Moreover, we have the estimate
\begin{equation}
    \begin{aligned}[t]
        \int_{t_1}^{t_2} |g^\prime_{n, \varphi_k} (t)| \mathrm{d}t  &\leq C_1 \| \varphi_k \|_{H^1(\Omega)} ( \| \veli^n(t, \cdot) \|_{L^1(\Omega)^d} + \| \mathrm{div}(\veli^n(t, \cdot))\|_{L^1(\Omega)}) | t_2 - t_1|
        \\ &\leq C_2 C_1 \| \varphi_k \|_{H^1(\Omega)} | t_2 - t_1|
    \end{aligned}
    \label{eq::stability::proof2}
\end{equation}
with $C_2 \coloneqq \sup_{n \in \mathbb N} ( \| \veli^n(t, \cdot) \|_{L^1(\Omega)^d} + \| \mathrm{div}(\veli^n(t, \cdot))\|_{L^1(\Omega)})$, which is bounded due to properties (ii) and (iii). For arbitrary $\epsilon > 0$, there exists $k_\epsilon \in \mathbb N$ such that $\| \varphi_{k_\epsilon} -\varphi \|_{L^2(\Omega)} \leq \epsilon$. Furthermore, $\delta(\epsilon) > 0$ can be chosen such that $\int_{t_1}^{t_2} | g_{n,\varphi_{k_\epsilon}}^\prime (t) | \mathrm{d}t \leq \epsilon$ if $|t_2 - t_1 | \leq \delta(\epsilon).$ This, together with \eqref{eq::stability::proof} and \eqref{eq::stability::proof2} yields
\begin{equation*}
    | g_{n, \varphi}(t_1) - g_{n,\varphi} (t_2) | \leq (2 |\Omega|^\frac12 C_1 +1) \epsilon
\end{equation*}
for all $n \in \mathbb N$ if $|t_2 - t_1| \leq \delta(\epsilon)$, which implies \eqref{eq::uniform_equicontinuity}.

Arzel\'a--Ascoli yields that there exists a subsequence $(g_{n, \varphi})_{n \in M}$, $M \subset \mathbb N$ and $g \in C([0,T])$ such that $\lim_{M \ni n \to \infty} g_{n,\varphi} = g$. Combining this finding with \eqref{eq::stability::proof::limit} yields $g = \langle \phi(t, \cdot), \varphi \rangle$ for arbitrary $\varphi \in L^2(\Omega)$. Since $g$ is uniquely defined, we can again argue with a contradiction proof to show that $\lim_{n \to \infty} g_{n,\varphi} = g$. Therefore, $\phi \in C([0,T], L^2(\Omega)-w)$ and $ \phi_n \to \phi $ in $C([0,T], L^2(\Omega)-w)$.

Due to the renormalization property of $\veli$, following the previous argument, one can show that $((\phi_n)^2)_{n \in \mathbb N}$ converges to $\phi^2$ in $C([0,T], L^2(\Omega)-w)$. \cite[Lem.~4.2]{jarde2019existence} then implies that, for all $n \in \mathbb N$, $\phi_n, \phi \in C([0,T],  L^2(\Omega))$ and $\phi_n \to \phi$ in $C([0,T], L^2(\Omega))$.

As in \cite[Proof of Thm.~4.1]{jarde2019existence}, it can be concluded that the result holds for general $1 \leq p < \infty$. The case $1 \leq p \leq 2$ is covered due to the continuous embedding of $C([0,T], L^2(\Omega))$ into $C([0,T], L^p(\Omega))$ for $1 \leq p \leq 2$. For $2< p < \infty$ and $t, s \in [0,T]$, it holds that
\begin{equation*}
    \| \phi_n(t, \cdot) - \phi_n(s, \cdot) \|_{L^p(\Omega)}^p \leq (2 C_1)^{p-2} \| \phi_n (t, \cdot) - \phi_n(s, \cdot) \|_{L^2(\Omega)}^2
\end{equation*}
and, for $t \in [0,T]$
\begin{equation*}
    \| \phi_n(t, \cdot) - \phi(t, \cdot) \|_{L^p(\Omega)}^p \leq (2 C_1)^{p-2} \| \phi_n(t, \cdot) - \phi(t, \cdot) \|_{L^2(\Omega)}^2.
\end{equation*}
Taking the supremum over $[0,T]$ yields the statement.

\section{Predual of \texorpdfstring{$\scriptstyle W^{1,\exp}_0(\Omega)$}{W1exp}}
\label{predual}

We define
\begin{equation*}
    W^{-1, L\log{}L}(\Omega) \coloneqq \lbrace f_0 + \sum_{i=1}^d \partial_{x_i} f_i \in \mathcal D^\prime (\Omega): ~ f_i \in L\log{}L(\Omega) ~ \forall i \in \lbrace 0, \ldots, d \rbrace \rbrace
\end{equation*}
as a subspace of the space of distributions $\mathcal D^\prime(\Omega)$ on $\Omega$ equipped with the norm
\begin{equation*}
    \| f \|_{W^{-1, L\log{}L}(\Omega)} \coloneqq \inf \left \lbrace \sum_{i=0}^d \|f_i \|_{L\log{}L(\Omega)}\,:\, f_i \in L\log{}L(\Omega) ~ \forall i \in \lbrace 0, \ldots, d \rbrace, ~ f = f_0 + \sum_{i=1}^d \partial_{x_i} f_i \right\rbrace.
\end{equation*}
By definition, there exists a bounded linear surjection
\begin{equation*}
    \iota: L\log{}L(\Omega, \mathbb R^{d+1}) \to W^{-1, L\log{}L}(\Omega), \quad \iota((f_0, \ldots, f_d)^\top) \coloneqq f_0 + \sum_{i=1}^d \partial_{x_i} f_i.
\end{equation*}
Moreover, there exists a bounded linear right inverse
\begin{equation*}
    \iota^{-1}: W^{-1, L\log{}L}(\Omega) \to L\log{}L(\Omega, \mathbb R^{d+1})
\end{equation*}
such that $\iota \circ \iota^{-1}: W^{-1, L\log{}L}(\Omega) \to W^{-1, L\log{}L}(\Omega)$ is the identity on $W^{-1, L\log{}L}(\Omega)$.
We consider the standard distributional pairing $\langle \cdot \rangle: ~ \mathcal D^\prime(\Omega) \times C_c^\infty(\Omega) \to \mathbb R$. In the following we show that this pairing can be extended to a pairing that maps from $W^{1, \exp}(\Omega) \times W^{-1, L\log{}L}(\Omega)$ to $\mathbb R$, which is given as
\begin{equation}
    \langle\!\langle \phi, f \rangle\!\rangle \coloneqq \int_\Omega \phi(x) (\iota^{-1}(f))_0(x) \mathrm{d} x - \sum_{i=1}^n \int_\Omega \partial_{x_i} \phi(x) (\iota^{-1}(f))_i (x)\mathrm{d} x.
    \label{eq::pairing}
\end{equation}
Moreover, we show that the dual space of $W^{-1, L\log{}L}(\Omega)$ can be identified with $W^{1, \exp}(\Omega)$.

We first consider well-definedness of \eqref{eq::pairing} and show that  $W^{-1, L\log{}L(\Omega)}$ is a subspace of the pre-dual of $W^{1,\exp}(\Omega)$:
Since $C_c^\infty(\Omega)$ is dense in $L\log{}L(\Omega)$, by its definition, $\iota(C_c^\infty(\Omega))$ is dense in $W^{-1, L\log{}L}(\Omega)$ (and therefore $W^{-1, L\log{}L}(\Omega)$ is also separable). Let $\phi \in W^{1,\exp}(\Omega)$ be arbitrary but fixed. Further, let $f \in \iota(C_c^\infty(\Omega, \mathbb R^{d+1}))$, then there exist $f_0, \ldots, f_d \in C_c^\infty(\Omega, \mathbb R^{d+1})$ such that $f = \iota((f_0, \ldots, f_d)^\top)$ and the pairing $\langle\!\langle \phi, \cdot \rangle\!\rangle$ is well-defined, bounded and linear. Integration by parts further shows that \eqref{eq::pairing} is consistent with the distributional pairing. Continuity arguments yield well-definedness, boundedness and linearity of $\langle\!\langle \phi, \cdot\rangle\!\rangle: ~W^{-1, L\log{}L}(\Omega) \to \mathbb R$.
Hence, $W^{1,\exp}(\Omega)$ can be identified with a subset of $(W^{-1, L\log{}L}(\Omega))^\ast$.

It remains to show that the dual space of $W^{-1, L\log{}L}(\Omega)$ can be identified with a subspace of $W^{1,\exp}(\Omega)$. Let $\phi \in (W^{-1, L\log{}L}(\Omega))^\ast$ be arbitrary but fixed.
By definition of the adjoint operator $\iota^\ast: (W^{-1, L\log{}L}(\Omega))^\ast \to (L\log{}L(\Omega, \mathbb R^{d+1})^\ast$, we know that, for $f \in C_c^\infty(\Omega)$,
\begin{equation*}
    \langle\!\langle \phi, f \rangle\!\rangle = \langle \phi, f\rangle = \langle \phi, \iota \circ \iota^{-1} f \rangle = \langle \iota^\ast \phi, \iota^{-1} f \rangle = \sum_{i=0}^d \int_\Omega (\iota^\ast \phi)_i (x) (\iota^{-1} f)_i (x) \mathrm{d} x,
\end{equation*}
where we identified $L\log{}L(\Omega, \mathbb R^{d+1})^\ast$ with $L^{\exp}(\Omega, \mathbb R^{d+1})$ in the latter equality (see \cref{predualLexp}).
Since the linear operators coincide on a dense subspace in $W^{-1, L\log{}L}(\Omega)$, they coincide on $W^{-1, L\log{}L}(\Omega)$. Let $f_0 \in C_c^\infty(\Omega)$, choosing $f = \partial_{x_i} f_0$ yields
\begin{equation*}
    - \int_\Omega \partial_{x_i} \phi(x) f_0 (x) \mathrm{d}x = \langle\!\langle \phi, f \rangle\!\rangle = \langle \iota^\ast \phi, \iota^{-1} f \rangle  = \int_\Omega (\iota^\ast \phi)_i (x) f_0 (x) \mathrm{d} x.
\end{equation*}
Since $f_0 \in C_c^\infty(\Omega)$ is chosen arbitrarily, $-\partial_{x_i} \phi = (\iota^\ast \phi)_i$ a.e. Therefore, $\phi \in W^{1,\exp}(\Omega)$.
This shows that $W^{1,\exp}(\Omega)$ has a separable predual.

Since $W_0^{1,\exp}(\Omega)$ is a weakly-$\ast$ closed linear subspace of $W^{1,\exp}(\Omega)$, \cite[4.6, 4.7, 4.8, 4.9]{Rudin1991} show that $W_0^{1,\exp}$ is isometrically isomorphic to $(W^{-1, L \log{}L}(\Omega) / \prescript{\perp}{}{W_0^{1,\exp}(\Omega)})^\ast$, the dual space of the quotient space $W^{-1, L \log{}L}(\Omega) / \prescript{\perp}{}{W_0^{1,\exp}(\Omega)}$, where $$\prescript{\perp}{}{W_0^{1,\exp}(\Omega)} \coloneqq \lbrace f \in W^{-1, L\log{}L}(\Omega)\,:\, \langle f, \phi \rangle = 0 ~ \forall \phi \in W_0^{1,\exp}(\Omega) \rbrace.$$ Moreover, since $W^{-1, L \log{}L}(\Omega)$ is separable, also $W_0^{1,\exp}(\Omega)$ has a separable predual.

\bibliographystyle{jnsao}
\bibliography{bib}

\end{document}